\DeclareFontFamily{OT1}{pzc}{}
\DeclareFontShape{OT1}{pzc}{m}{it}{<-> s * [1.10] pzcmi7t}{}
\DeclareMathAlphabet{\mathpzc}{OT1}{pzc}{m}{it}
\newcommand{\re}{\text{\rm Re\,}}
\newcommand{\im}{\text{\rm Im\,}}
\newcommand{\bd}{{\mathbb{D}}}
\newcommand{\bn}{{\mathbb{N}}}
\newcommand{\br}{{\mathbb{R}}}
\newcommand{\bc}{{\mathbb{C}}}
\newcommand{\ch}{{\mathcal{H}}}
\newcommand{\cl}{{\mathcal{L}}}
\newcommand{\css}{{\mathcal{S}}}
\newcommand{\pzch}{{\mathpzc{h}}}
\newcommand{\pzce}{{\mathpzc{e}}}
\newcommand{\pzcq}{{\mathpzc{q}}}
\newcommand{\pzcl}{{\mathpzc{l}}}
\renewcommand{\a}{\alpha}
\renewcommand{\b}{\beta}
\renewcommand{\l}{\lambda}
\newcommand{\s}{\sigma}
\newcommand{\ep}{\varepsilon}
\renewcommand{\d}{\delta}
\newcommand{\g}{\gamma}
\renewcommand{\gg}{\Gamma}
\newcommand{\z}{\zeta}
\newcommand{\bsl}{\backslash}
\newcommand{\ovl}{\overline}
\newcommand{\lb}{\left[}
\newcommand{\rb}{\right]}
\newcommand{\lp}{\left(}
\newcommand{\rp}{\right)}
\newcommand{\sqpm}{\mathrm{sq}_\pm}
\newcommand{\sqp}{\mathrm{sq}_+}
\newcommand{\sqm}{\mathrm{sq}_-}
\DeclarePairedDelimiter{\bra}{(}{)}
\numberwithin{equation}{section}
\newtheorem{theorem}{Theorem}[section]
\newtheorem{lemma}[theorem]{Lemma}
\newtheorem{corollary}[theorem]{Corollary}
\newtheorem{proposition}[theorem]{Proposition}
\theoremstyle{definition}
\newtheorem{remark}[theorem]{Remark}
\newtheorem*{remark*}{Remark}
\newtheorem{example}[theorem]{Example}
\title[Lieb--Thirring and Jensen sums]{Lieb--Thirring and Jensen sums for non-self-adjoint Schr\"odinger operators on the half-line}
\author{Leonid Golinskii}
\email{golinskii@ilt.kharkov.ua}
\address{B. Verkin Institute for Low Temperature Physics and Engineering, Ukrainian
Academy of Sciences, 47 Nauky ave., Kharkiv 61103, Ukraine}
\author{Alexei Stepanenko}
\email{StepanenkoA@cardiff.ac.uk}
\address{School of Mathematics, Cardiff University, Senghennydd Road, Cardiff CF24 4AG, Wales, UK}
\date{\today}
\subjclass[2020]{47B28, 34L15}
\keywords{ Non-self-adjoint Schr\"odinger operators, discrete spectrum, Jost solutions, Lieb--Thirring type inequality, dissipative barrier potentials}
\begin{document}
\begin{abstract}
  We prove upper and lower bounds for sums of eigenvalues of Lieb--Thirring type for non-self-adjoint Schr\"odinger operators on the half-line.
  The upper bounds are established for general classes of integrable
  potentials and  are shown to be
  optimal in various senses by proving the lower bounds for specific
  potentials.
  We consider sums that correspond to both the critical and non-critical cases. 
\end{abstract}
\maketitle
\section*{Introduction}
There is a vast literature on the spectral theory of self-adjoint Schr\"odinger operators, motivated by their numerous applications in
various areas of mathematical physics. One of the highlights of this theory is  the seminal Lieb--Thirring inequality for operators on $L^2(\br^d)$,
$d\in\bn$, which describes the discrete spectrum of such operators. 
For the case of real line $d=1$ it reads \cite{LiebThirring}
\begin{equation}
  \label{eq:classical-lt}
  \sum_{\lambda \in \sigma_d(H)}|\lambda|^{\mu} \leq C(\mu) \int_{-\infty}^\infty [q_-(x)]^{\mu+1/2} dx, \qquad \mu \geq \frac{1}{2},
\end{equation}
where $C(\mu) > 0$ depends only on $\mu$,  $H$ denotes a Schr\"odinger operator on $\br$ with real-valued potential $q$ and $q_-(x) = \max (0 , - q(x))$.

By comparison, the non-self-adjoint theory is in its youth. The results obtained in the last  two decades have revealed new phenomena and
demonstrated crucial differences between SA and NSA theories. Among the problems which have attracted attention, let us mention spectral enclosure
results and bounds on the number of complex eigenvalues \cite{Davies1,Davies2,LapSaf,Frank1,Frank2,FLSnumber,BogCuen}.
Another active area of interest is non-self-adjoint generalisations of Lieb--Thirring inequalities for Schr\"odinger operators
\cite{FraLapLieb,DHK,FraSab,Safronov,GolKup,Frank3,BogNew},
as well as for other types of operators \cite{DHK2,Sambou,Dubu1,Dubu2,Graphene}.
Still, many questions remain unanswered.

The main object under consideration in the present paper is a Schr\"odinger operator
\begin{equation}
  \label{eq:schrod-H}
  H = H_q :=  - \frac{d^2}{dx^2} + q \qquad \text{on} \qquad L^2(\br_+)
\end{equation}
endowed with a Dirichlet boundary condition at 0, where the potential $q \in L^1(\br_+)$ may be complex-valued.
As is well known, the set of discrete eigenvalues $\sigma_d(H)$ (i.e., eigenvalues of finite algebraic multiplicity in $\bc \backslash \br_+$) may be countably infinite and may accumulate only to $\br_+$.
Lieb--Thirring-type inequalities give information on the distribution of the eigenvalues and, in particular, 
on the rate of accumulation to points in~$\br_+$.

In this paper, we study sums of eigenvalues of the form
\begin{equation}
  \label{eq:defn-S-ep}
  S_\ep(H) := \sum_{\lambda \in \sigma_d(H)} \frac{\text{dist}(\lambda,\br_+)}{|\lambda|^{(1-\ep)/2}}, \qquad \ep \geq 0. 
\end{equation}
Here, eigenvalues of higher algebraic multiplicity are repeated in the sums accordingly. We refer to $S_\ep(H)$ as the {\it Lieb--Thirring sums}.
Note that, in the case when $q$ is real, the eigenvalues of $H_q$ are all negative, so $S_\ep(H_q)$ coincides with the classical
Lieb--Thirring sum in (\ref{eq:classical-lt}), with $\mu = (1 + \ep)/2$. Note also that, by \cite{FLS},  the spectral enclosure $|\lambda|\le\|q\|_1^2$ 
holds for every $\lambda\in\sigma_d(H)$
where, as usual, 
\begin{equation}
  \label{eq:L1-norm}
  \|q\|_1 := \int_0^\infty |q(x)| dx, \qquad q \in L^1(\br_+).
\end{equation}
So, there is a simple relation between the Lieb--Thirring sums with different~$\ep$
\begin{equation}
  \label{twolt}
S_{\ep_2}(H_q)\le \|q\|_1^{\ep_2-\ep_1}\,S_{\ep_1}(H_q), \qquad 0\le\ep_1<\ep_2.
\end{equation}

We also study the sums
\begin{equation}
  \label{eq:defn-J}
  J(H) := \sum_{\lambda \in \sigma_d(H)} \im \sqrt{\lambda} \, , 
\end{equation}
$\sqrt{\cdot}$ denotes the branch of the square root  such that $\im \sqrt{z} > 0$ for all $z \in \bc \bsl \br_+$,
and we refer to $J(H)$ as the {\it Jensen sums}. Notably, $J(H)$ arises naturally from Jensen's formula in complex analysis.
It follows immediately from the inequality \cite[Lemma 1]{DHK}
\begin{equation}\label{dhk09}
|\l|^{1/2}\,|\im\sqrt{\l}|\le {\rm dist} (\l,\br_+)\le 2|\l|^{1/2}\,|\im\sqrt{\l}|, 
\end{equation}
that $J(H)$ is equivalent to $S_0(H)$
\begin{equation}\label{jenvslt} 
J(H) \leq S_0(H) \leq 2 J(H).
\end{equation}

The aim of the paper is two-fold. On one hand, we shall establish upper bounds for the sums $S_\ep(H)$, $\ep \geq 0$, and $J(H)$.
While the upper bounds for the sums  $S_\ep(H)$, $\ep > 0$, (i.e., the non-critical case) hold for arbitrary integrable potentials,
the upper bounds for the sums  $J(H)$ (i.e., the critical case) are only valid for sub-classes of integrable potentials.
On the other hand, corresponding lower bounds shall be proven for specific potentials,
demonstrating optimality of our upper bounds in various senses.
Moreover, in Section 3 we shall construct an integrable potential such that the sum $J(H)  =\infty$.

\subsection*{Summary of main results}

Our analysis is based on identifying the square roots of eigenvalues of the Schr\"odinger operator $H$ \eqref{eq:schrod-H} 
with the zeros of an analytic function in the upper-half of the complex plane $\bc_+$. The idea of using methods of complex analysis in 
the theory of non-self-adjoint Schr\"odinger operator on the half-line goes back to the pioneering papers of Naimark \cite{Naimark} and 
Levin \cite{Levin}, and reaches its culmination in the famous series of papers by Pavlov \cite{Pavlov1,Pavlov2,Pavlov3}, who found the 
threshold between finitely and infinitely many eigenvalues in the case of a complex potential.

Let us first recall the notion of a Jost function, which will be useful for describing the basic ideas of the proofs,
and then proceed to give an account of our main results.
\subsubsection*{Jost functions}

It is well known \cite[Theorems 2.2.1 and 2.3.1]{Naimark} that for any $z \in \bc_+$, the Schr\"odinger equation on $\br_+$
\begin{equation}
  \label{eq:schrod-eq}
  - y'' + q(x) y = z^2 y, \qquad q \in L^1(\br_+)
\end{equation}
has a unique solution $e_+(\cdot,z)$ with the property that $e_+(x,\cdot)$ is analytic on $\bc_+$ for all $x \geq 0$ and  
\begin{equation}
  \label{eq:Jost-asymp}
  e_+(x,z) =e^{ixz}\bigl(1+o(1)\bigr), \quad \text{as} \quad x \to \infty
\end{equation}
uniformly on compact subsets of $\bc_+$. 
$e_+(\cdot,z)$ is referred to as the \emph{Jost solution}.
The \emph{Jost function} is defined as $e_+(z) := e_+(0,z)$, $z \in \bc_+$, and has the property that
\begin{equation}
  \lambda=z^2 \in \sigma_d(H) \qquad \iff \quad e_+(z) = 0. 
\end{equation}
Moreover, the algebraic multiplicity (i.e., the rank of the Riesz projection) of $z^2$ as an eigenvalue of $H$  coincides
with the multiplicity of $z$ as a zero of $e_+$ (see, for instance, \cite[Theorem 5.4 and Lemma 6.2]{GLMZ}).

\subsubsection*{Upper bound for the non-critical case}

Our first result concerns a bound from above for the Lieb-Thirring sums $S_\ep(H)$ in the non-critical case $\ep > 0$. It is valid for Schr\"odinger operators with
arbitrary integrable potentials.

\begin{theorem}[= Theorem \ref{quanbou}]
  \label{th:upper-non-crit-intro}
  For every $\ep > 0$, there exists a constant $K(\ep) > 0$ depending only on $\ep$, such that for any potential $q \in L^1(\br_+)$, we have
  \begin{equation}
    \label{eq:th-0p1}
    S_\ep(H_q) = \sum_{\lambda \in \sigma_d(H_q)} \frac{\mathrm{dist}(\lambda,\br_+)}{|\lambda|^{(1-\ep)/2}} \leq K(\ep) \|q\|_1^{1+\ep}. 
  \end{equation}
\end{theorem}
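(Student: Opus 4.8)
The proof will exploit the correspondence between $\sigma_d(H_q)$ and the zeros of the Jost function $e_+$ in $\bc_+$, combined with a Blaschke-type estimate for those zeros. Step one is to reduce the left-hand side of \eqref{eq:th-0p1} to a weighted sum over the zeros of $e_+$. Writing $\l=z^2$ with $z\in\bc_+$, so that $\sqrt\l=z$ and $|\l|^{1/2}=|z|$, the right-hand inequality in \eqref{dhk09} yields $\mathrm{dist}(\l,\br_+)/|\l|^{(1-\ep)/2}\le 2|z|^{\ep}\,\im z$. Since the eigenvalues of $H_q$, with algebraic multiplicities, are precisely the numbers $z_k^2$ for $z_k$ ranging over the zeros of $e_+$ in $\bc_+$ counted with multiplicity, it follows that
\[
S_\ep(H_q)\le 2\sum_k |z_k|^{\ep}\,\im z_k ,
\]
and it remains to bound this sum by $K(\ep)\|q\|_1^{1+\ep}$.

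Step two collects the analytic input on $e_+$ (the case $q=0$ being trivial, we assume $\|q\|_1>0$). Iterating the Volterra integral equation for the Jost solution and using $|(e^{2izs}-1)/(2iz)|\le 1/|z|$ for $z\in\bc_+$ and $s\ge 0$, one gets the a priori bounds
\[
\log|e_+(z)|\le\frac{\|q\|_1}{|z|},\qquad |e_+(z)-1|\le\frac{\|q\|_1}{|z|}\,e^{\|q\|_1/|z|},\qquad z\in\bc_+;
\]
in particular $|e_+(3i\|q\|_1)|>1/2$. In addition, by the enclosure $|\l|\le\|q\|_1^2$ of \cite{FLS}, all the $z_k$ lie in $\{|z|\le\|q\|_1\}$. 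Put $R:=\|q\|_1$, let $\phi(z)=(z-3iR)/(z+3iR)$ be the conformal map $\bc_+\to\bd$ with $\phi(3iR)=0$ and $\phi(0)=-1$, and set $f:=e_+\circ\phi^{-1}$, which is analytic on $\bd$ with $|f(0)|>1/2$. Since $|\phi^{-1}(w)|=3R|1+w|/|1-w|$, the first bound above gives
\[
\log|f(w)|\le\frac{R}{|\phi^{-1}(w)|}=\frac{|1-w|}{3|1+w|}\le\frac{2}{3\,|w+1|},\qquad w\in\bd,
\]
so $f$ has a single boundary singularity, of first order, at $w=-1$ — the image of the origin in the $\l$-plane.

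Step three is to apply a Blaschke-type theorem in the spirit of Borichev--Golinskii--Kupin, in the form that a function on $\bd$ with $|f(0)|$ bounded below and a single first-order boundary singularity as above satisfies, for every $\t>0$,
\[
\sum_k (1-|w_k|)\,|w_k+1|^{\t}\le C(\t),
\]
with $C(\t)$ depending only on $\t$. Transporting this back through $\phi$, using that $1-|w_k|\asymp\im z_k/R$ and $|w_k+1|\asymp|z_k|/R$ with absolute implied constants (valid because $|z_k|\le R$), one obtains $\sum_k\im z_k\,|z_k|^{\t}\le C'(\t)\,R^{1+\t}$; taking $\t=\ep$ and combining with Step one proves \eqref{eq:th-0p1}. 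The crux of the argument is precisely this last estimate with the correct exponent, namely the power $1$ (not $1+\t$) on $1-|w_k|$: it is the weight $|w_k+1|^{\ep}$ — equivalently the factor $|\l|^{\ep/2}$ built into the definition of $S_\ep$ — that makes the sum over zeros clustering at the boundary singularity convergent, and since \eqref{eq:th-0p1} cannot hold for $\ep=0$, the argument must genuinely use the vanishing of this weight at $w=-1$. I expect establishing this sharp Blaschke-type bound (e.g.\ via a dyadic decomposition of a boundary neighbourhood of $-1$ combined with the Poisson--Jensen formula, rather than a black-box appeal to the original Borichev--Golinskii--Kupin inequality, whose exponent is $1+\t$) to be the main technical obstacle.
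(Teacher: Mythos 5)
Your proposal follows essentially the same route as the paper: reduce $S_\ep(H_q)$ to the weighted sum $\sum_k \im z_k\,|z_k|^{\ep}$ over zeros of the Jost function via \eqref{dhk09}, transplant $e_+$ to the unit disc after normalising at a point of the imaginary axis where the classical bound \eqref{ineqjost} forces $|e_+|\ge \tfrac12$, apply a Blaschke-type condition with the weight $|1+w|^{\ep}$ vanishing at the image of the origin, and return to $\bc_+$ using the spectral enclosure $|z_k|\le\|q\|_1$ and a rescaling to produce the factor $\|q\|_1^{1+\ep}$. Your M\"obius map $(z-3iR)/(z+3iR)$ with $R=\|q\|_1$ simply builds the paper's rescaling $z\mapsto yz$, $y=\|q\|_1/\log\tfrac32$, into the change of variable, and keeping $|f(0)|>\tfrac12$ instead of dividing by $e_+(iy)$ to get $f(0)=1$ is immaterial (divide by $f(0)$ and absorb $\log 2\le 2\log 2/|1+w|$ into the constant).

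The only substantive point concerns the step you flag as the main technical obstacle. The paper makes exactly the ``black-box'' appeal you were reluctant to make: it quotes from \cite{BGK1} the condition $\sum_{\eta\in Z(f)}(1-|\eta|)\,|1+\eta|^{\ep}\le K_1(\ep)$ for analytic $f$ on $\bd$ with $f(0)=1$ and $\log|f(w)|\le C/|1+w|$, i.e.\ with exponent exactly one on $1-|\eta|$. The $(1-|\eta|)^{1+\t}$ form you recall is the variant allowing growth of the type $(1-|w|)^{-p}$ along the whole circle; in the present situation the bound $C/|1+w|$ keeps $f$ bounded away from the single boundary point $w=-1$, and for such finitely-many-point singularities the exponent-one statement is available (this is the form cited in \cite{BGK1}, with later refinements in \cite{BGK2}). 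Granting that citation, your argument is complete and yields \eqref{eq:th-0p1} exactly as in the paper, with no need for a bespoke dyadic/Poisson--Jensen argument; without it, your Step three is an unproved assertion, so be aware that the content of the theorem is carried by that reference rather than by anything you would need to re-derive.
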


  Given a pair $(\a, \b)$ of positive parameters, we define a generalised Lieb--Thirring sum $S_{\a,\b}(H_q)$ by \cite{FrankPrivate}
\begin{equation}\label{genltsum}
S_{\a,\b}^{2\a}(H_q) :=\sum_{\lambda \in \sigma_d(H_q)} |\l|^\a\,\left[\frac{\mathrm{dist}(\lambda,\br_+)}{|\lambda|}\right]^\b 
=\sum_{\lambda \in \sigma_d(H_q)} \frac{\mathrm{dist}^\b(\lambda,\br_+)}{|\lambda|^{\b-\a}}\,.
\end{equation}
In terms of such sums, Theorem \ref{th:upper-non-crit-intro} takes the form
\begin{equation}\label{theor0.1}
S_{\a,1}(H_q)\le C_\a\,\|q\|_1, \qquad \forall \a>\frac12.
\end{equation}
We study such generalised Lieb--Thirring sums in more detail in Proposition \ref{generltsup}.

The proof of Theorem \ref{th:upper-non-crit-intro} is based on the application of a result of Borichev, Golinskii and Kupin \cite{BGK1}
concerning the Blaschke-type conditions on zeros of analytic functions on the
unit disk $\bd$ satisfying appropriate growth conditions at the boundary.
An analytic function on $\bd$ is constructed from the Jost function $e_+$ using a certain conformal mapping, and
the growth conditions are verified by applying classical estimates for $e_+$.

\subsubsection*{Upper bounds for the critical case}

Let us address upper bounds for the Jensen sums $J(H)$. We proceed by embarking on a study of sub-classes of $L^1(\br_+)$.

To begin with, we introduce a pair of positive, continuous functions $a$ and $\hat{a}$ on $\br_+$, such that
\begin{equation}
  \label{eq:a-ahat}
  \hat{a}(x) = \frac{x}{a(x)},  \qquad a(x) = \frac{x}{\hat{a}(x)}, \qquad x \in \br_+. 
\end{equation}
We will refer to $a$ and $\hat{a}$ as weight functions. We require that:
\begin{itemize}
\item $a$ is monotonically increasing.
\item $\hat{a}$ is strictly monotonically increasing, $\hat{a}(0) = 0$ and $\hat{a}(\infty) = \infty$.
\end{itemize}

Introduce the norm
\begin{equation}
  \label{eq:a-norm-intro}
  \|q\|_a := \int_0^\infty a(x) |q(x)| dx,
\end{equation}
which agrees with \eqref{eq:L1-norm} for $a\equiv 1$. We consider sub-classes of $L^1(\br_+)$ of the form
\begin{equation}
  \label{eq:Q-a-class-intro}
  Q_a := \{ q \in L^1(\br_+) : \|q\|_a < \infty \}.
\end{equation}
In its most general form, our upper bound for the Jensen sum reads as follows.
\begin{theorem}[= Theorem \ref{ltgente}]
  \label{th:-Jensen-upper-intro}
  Let $a$ and $\hat{a}$ be a pair of weight functions as described above.
  Assume also that
  \begin{equation}
    \label{eq:a-int-cond-intro}
    \int_1^\infty \frac{dx}{xa(x)}<\infty.
  \end{equation}
Then, for each potential $q\in Q_a$ and each $\d\in(0,1)$, we have 
\begin{equation}\label{eq:JH-upper-intro}
J(H_q) \le y\log\frac{1+\d}{(1-\d)^2} + \frac4{\pi}\,\|q\|_a\int_{\frac1{y}}^\infty \frac{dx}{xa(x)},
\end{equation}
where $y=y(\d,a,\|q\|_a)>0$ is uniquely determined by
\begin{equation}\label{eq:testpo-intro}
\hat a\left(\frac1{y}\right)\,\|q\|_a=\log (1+\d).
\end{equation}
\end{theorem}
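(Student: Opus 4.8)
The plan is to realise $J(H_q)$ as a Jensen-type sum over the zeros of the Jost function and to bound it by Jensen's inequality on $\bd$, transplanting $e_+$ by a conformal map tied to the parameter $y$ of \eqref{eq:testpo-intro}. Writing the zeros of $e_+$ in $\bc_+$, repeated with multiplicity, as $(z_k)$, we have $\sigma_d(H_q)=\{z_k^2\}$ and $\im\sq{z_k^2}=\im z_k$ for $z_k\in\bc_+$, so $J(H_q)=\sum_k\im z_k$. Let $\p_y(z)=(z-iy)/(z+iy)$ be the scaled Cayley map of $\bc_+$ onto $\bd$, with inverse $\p_y^{-1}(w)=iy(1+w)/(1-w)$, and set $F:=e_+\circ\p_y^{-1}$, an analytic function on $\bd$ with $F(0)=e_+(iy)$ and zeros $a_k=\p_y(z_k)$. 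The only input about $e_+$ is the classical bound $|e_+(z)-1|\le e^{\sigma(z)}-1$ on $\bc_+$, where $\sigma(z):=\int_0^\infty\min\{t,|z|^{-1}\}\,|q(t)|\,dt$; equivalently $\log|e_+(z)|\le\sigma(z)$ and $|e_+(z)|\ge 2-e^{\sigma(z)}$. The bridge to $\|q\|_a$ is the elementary inequality $\min\{t,|z|^{-1}\}\le\hat a(|z|^{-1})\,a(t)$ for all $t>0$, immediate from the monotonicity of $a$ and $\hat a$ (if $t\le|z|^{-1}$ use $\hat a(t)\le\hat a(|z|^{-1})$; if $t>|z|^{-1}$ use $a(|z|^{-1})\le a(t)$); integrating against $|q|$ gives $\sigma(z)\le\hat a(|z|^{-1})\,\|q\|_a$. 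Two consequences are crucial: on the semicircle $|z|=y$ the defining equation \eqref{eq:testpo-intro} gives $\sigma(z)\le\log(1+\d)<\log 2$, hence $1-\d\le|e_+(z)|\le 1+\d$ there (so in particular $F(0)=e_+(iy)\ne0$); and since a zero forces $|e_+(z)-1|=1$, i.e. $\sigma(z)\ge\log 2>\log(1+\d)=\hat a(1/y)\|q\|_a$, strict monotonicity of $\hat a$ yields $|z_k|<y$ for every $k$.

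Next I would apply Jensen's inequality on $\{|w|<r\}$ and let $r\uparrow1$; since $w=\p_y(x)$, $x\in\br$, gives $d\theta=2y\,dx/(x^2+y^2)$, this produces
\[
\sum_k\log\frac1{|a_k|}\ \le\ \frac1\pi\int_{-\infty}^\infty\frac{y\,\log^+|e_+(x)|}{x^2+y^2}\,dx\ -\ \log|e_+(iy)| ,
\]
the boundary passage being legitimate because $\log^+|e_+(x)|\le\sigma(x)\le\hat a(|x|^{-1})\|q\|_a$ is Poisson-integrable on $\br$, which is exactly what \eqref{eq:a-int-cond-intro} guarantees. On the other hand, using $-\log u\ge1-u$, the identity $1-|a_k|^2=4y\,\im z_k/|z_k+iy|^2$, and the bound $|z_k+iy|^2=|z_k|^2+2y\,\im z_k+y^2<4y^2$ (valid because $|z_k|<y$),
\[
\log\frac1{|a_k|}\ \ge\ \tfrac12\bigl(1-|a_k|^2\bigr)\ =\ \frac{2y\,\im z_k}{|z_k+iy|^2}\ >\ \frac{\im z_k}{2y},
\]
so $\sum_k\log|a_k|^{-1}>J(H_q)/(2y)$. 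Combining the two displays,
\[
J(H_q)\ \le\ 2y\left[\,\frac1\pi\int_{-\infty}^\infty\frac{y\,\log^+|e_+(x)|}{x^2+y^2}\,dx\ -\ \log|e_+(iy)|\,\right].
\]

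It then remains to split the integral at $|x|=y$. For $|x|\le y$, use $y/(x^2+y^2)\le1/y$ and $\log^+|e_+(x)|\le\hat a(|x|^{-1})\|q\|_a$, together with the change of variables $r=1/x$, which turns $\int_0^y\hat a(1/r)\,dr$ into $\int_{1/y}^\infty dx/(x\,a(x))$ (finite by \eqref{eq:a-int-cond-intro}); this piece contributes at most $\tfrac4\pi\|q\|_a\int_{1/y}^\infty dx/(x\,a(x))$. For $|x|>y$, use the cruder bound $\log^+|e_+(x)|\le\hat a(1/y)\|q\|_a=\log(1+\d)$ and $\int_{|x|>y}y\,dx/(x^2+y^2)=\pi/2$, giving at most $y\log(1+\d)$. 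Finally $-\log|e_+(iy)|\le-\log(1-\d)$, contributing $2y\log\frac1{1-\d}$. Adding the three terms and using $y\log(1+\d)+2y\log\frac1{1-\d}=y\log\frac{1+\d}{(1-\d)^2}$ yields exactly \eqref{eq:JH-upper-intro}.

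The step I expect to need the most care is the a priori enclosure $|z_k|<y$ of the eigenvalue square roots: it is the conceptual heart of the argument — it is what makes the comparison between $\sum_k\log|a_k|^{-1}$ and $\sum_k\im z_k$ uniform, and what guarantees $F(0)\ne0$ — and it relies both on having the sharp constant $1$ in the exponential Jost estimate and on the fact that \eqref{eq:testpo-intro} keeps $\log(1+\d)$ strictly below $\log 2$ for every $\d\in(0,1)$. The remaining delicate point is the limiting form of Jensen's inequality on the boundary circle (the interchange of $\lim_{r\to1}$ with the boundary integral), which hinges on the Poisson-integrability afforded by \eqref{eq:a-int-cond-intro} and requires a little attention near the image of $z=0$, where $e_+$ need not extend continuously.
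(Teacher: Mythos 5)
Your proposal is correct and follows essentially the same route as the paper's proof of Theorem \ref{ltgente}: the weighted Jost bound coming from the principle \eqref{principle} (i.e.\ \eqref{jostfunc}), the choice of $y$ via \eqref{testpo}, the zero enclosure $|z_k|<y$ from the zero-free region, a Cayley transplantation to the disc followed by Jensen's formula, and a split of the boundary integral at $|x|=y$ which corresponds exactly to the paper's split at $|\theta|=\pi/2$. The only minor divergence is that you pass to boundary values of $e_+$ on $\br$ in a limiting form of Jensen's inequality, whereas the paper avoids this by bounding $\log|f(re^{i\theta})|$ uniformly in $r$ through $\max_{0\le r\le 1}\bigl|\tfrac{1-re^{i\theta}}{1+re^{i\theta}}\bigr|$; your limit step is legitimate, but its full justification uses precisely that $r$-uniform dominating estimate together with \eqref{eq:a-int-cond-intro}, so it is a technical variant rather than a different argument.
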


We emphasise that this upper bound is not applicable for arbitrary potentials $q \in L^1(\br_+)$.
 Loosely speaking, the conditions $\|q\|_a<\infty$ and \eqref{eq:a-int-cond-intro} may contradict each other, as far as the
growth of $a$ goes. An instructive family of integrable potentials is considered in Remark \ref{ex:log-alph}, namely,
\begin{equation}
  \label{eq:slow-log-q-intro}
  q(x)= \frac{i}{x \log^\alpha (x)} \chi_{[e,\infty)}(x), \qquad \alpha > 1,\quad x \in \br_+,
\end{equation}
where $\chi$ denotes the indicator function.
For $\alpha > 2$, there exists an appropriate weight function $a$, and Theorem \ref{th:-Jensen-upper-intro} is applicable to $q$.
For $1 < \alpha \leq 2 $, such a weight function $a$ does not exist.

We do not claim that $J(H_q)=\infty$ for the potentials $q$ in \eqref{eq:slow-log-q-intro} with $1<\alpha\leq 2$.  In Theorem \ref{mainth},
we construct an example of a potential for which the Jensen sum diverges, showing that Theorem \ref{th:-Jensen-upper-intro} cannot be extended 
to all integrable potentials.

Theorem \ref{th:-Jensen-upper-intro} is applied to obtain upper bounds for $J(H)$ valid for two important specific classes of potentials.
\begin{enumerate}[label=(\alph*),wide,labelindent=5pt]
\item[(A)] (See Corollary \ref{col:poly-bound})  \emph{ Let $p \in (0,1)$ and $a(x) = 1+ x^p$. Then for each potential $q \in Q_a$, we have}
  \begin{equation}\label{eq:poly-bound-intro}
    J(H_q) \leq \tfrac{4}{\pi} \|q\|_a \log \lp 1 + \|q\|_a \rp  + \tfrac{9}{p}\|q\|_a + 2.
  \end{equation}
\end{enumerate}
In \cite{Safronov}, Safronov has also obtained a bound for the Jensen sum $J(H)$, valid for potentials $q \in L^1(\br_+)$ 
satisfying $\|x^p q\|_1 < \infty$ for some $p \in (0,1)$.
Comparatively, the above result (A) offers an improved asymptotic estimate for semiclassical Schr\"odinger operators (see Remark \ref{rem:saf}). 
\begin{enumerate}[label=(\alph*),wide,labelindent=5pt]
\item[(B)] (See Corollary \ref{compsup})
 \emph{Suppose the potential $q \in L^1(\br_+)$ is compactly supported. Then, for every  $R>1$ with ${\rm supp}(q)\subset [0,R]$, we have }
\begin{equation}\label{eq:comp-upper-intro}
J(H_q)\le 7\left[\frac1{R}+\|q\|_1\Bigl(1+\log(1+\|q\|_1)+\log R\Bigr)\right].
\end{equation}
\end{enumerate}
As we will see below, this bound is optimal in a certain asymptotic sense.

The proof of Theorem \ref{th:-Jensen-upper-intro} centers around establishing improved estimates for the Jost function $e_+$ corresponding
to potentials in a given sub-class $Q_a$.
These improved estimates are obtained by combining the arguments for the classical case with the following simple principle:
\begin{equation}\label{principle}
  0 < A \leq \min(X_1,\, X_2) \ \Rightarrow \ A = a(A)\hat{a}(A) \leq a(X_1)\hat{a}(X_2).
\end{equation}
The bound (\ref{eq:JH-upper-intro}) of Theorem \ref{th:-Jensen-upper-intro}
is proven by using these improved estimates for $e_+$ in conjunction with Jensen's formula.
The proofs of Corollaries~\ref{col:poly-bound} and \ref{compsup} amount to appropriate choices for $a$ and $\d$.

\subsubsection*{Lower bounds for dissipative barrier potentials}

The optimality of the above upper bounds can be addressed by studying corresponding lower bounds for
Schr\"odinger operators with so-called {\it dissipative barrier potentials}.
Precisely, for $\g,R > 0$, we consider the Schr\"odinger operator
\begin{equation}
  \label{eq:db-defn-intro}
  L_{\g,R} :=  - \frac{d^2}{dx^2} + i \gamma \chi_{[0,R]} \qquad \text{on} \qquad L^2(\br_+)
\end{equation}
endowed with a Dirichlet boundary condition at 0. The dissipative barrier potentials find applications in the numerical computation of eigenvalues, 
where they are considered as a perturbation of a fixed background potential \cite{MarlSch,StepaIncl}. We focus on establishing our estimates 
for large enough $R$. Observe that $\|i \gamma \chi_{[0,R]}\|_1 = \gamma R$.
\begin{theorem}[= Theorem \ref{litrbelow}]
  \label{th:db-lower-intro}
  Suppose that $R \geq 600(\gamma^{3/4}+\gamma^{-3/4})$.
  
  $(i)$    We have the following lower bound
    \begin{equation}
      \label{eq:crit-lower-db}
    2 J(L_{\g,R})\geq S_0(L_{\g,R}) \geq \frac{ \gamma R}{16 \pi} \log R.
  \end{equation}
  
 $(ii)$  Let $0<\ep<1$. Under the stronger assumption on $R$
\begin{equation}\label{ranger1-intro}
R\ge \frac4{e^2\g}(64\pi)^{2/\ep}+1,
\end{equation}
 we have the lower bound
  \begin{equation}
    \label{eq:non-crit-lower-db}
    S_\ep(L_{\g,R}) \geq \frac1{256\pi\ep}\,\frac{(\g R)^{1+\ep}}{\log^\ep R}\,.
\end{equation}

\end{theorem}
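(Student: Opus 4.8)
The plan is to analyse the Jost function $e_+(z)$ of the dissipative barrier potential $i\gamma\chi_{[0,R]}$ explicitly, locate many of its zeros in $\bc_+$, and then sum the contributions $\im\sqrt{\lambda}$ (equivalently $\operatorname{dist}(\lambda,\br_+)/|\lambda|^{1/2}$) over those zeros. Since the potential is piecewise constant, equation \eqref{eq:schrod-eq} can be solved by matching: on $[0,R]$ the solution involves $\exp(\pm i\sqrt{z^2-i\gamma}\,x)$, while on $[R,\infty)$ it must coincide with the Jost solution $e^{ixz}$. Imposing the Dirichlet condition $e_+(0,z)=0$ together with $C^1$-matching at $x=R$ yields a transcendental equation for the eigenvalues; after the change of variables $w=\sqrt{z^2-i\gamma}$ this takes the familiar ``resonance'' form $\tan(wR)=$ (a rational expression in $w$ and $z$), so that the zeros cluster near $wR\approx n\pi$, i.e. $w\approx \pi n/R$, $n\in\bn$. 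Solving back, one finds eigenvalues $\lambda_n=z_n^2$ with $z_n^2\approx (\pi n/R)^2+i\gamma$ for a range of $n$, and in particular $\im\sqrt{\lambda_n}$ is comparable to $\gamma/(2\sqrt{(\pi n/R)^2+\gamma})$ on the relevant range.

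Next I would make this rigorous on a controlled window of indices. For $(\pi n/R)^2\lesssim\gamma$ — i.e. $n\lesssim \gamma^{1/2}R/\pi$ — we have $|\lambda_n|\asymp\gamma$ and $\operatorname{dist}(\lambda_n,\br_+)\asymp\gamma$, so each such eigenvalue contributes $\asymp \gamma^{1/2}$ to $S_0$. The hypothesis $R\ge 600(\gamma^{3/4}+\gamma^{-3/4})$ guarantees $\gamma^{1/2}R$ is large enough that the perturbative localisation of the zeros near $\pi n/R$ is valid uniformly for all $n$ in this window (the error terms in the matching equation, which are $O(1/(wR))$ or $O(\gamma/|w|^2)$, stay small). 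Counting: there are $\asymp \gamma^{1/2}R$ such indices, giving $S_0\gtrsim \gamma^{1/2}\cdot \gamma^{1/2}R=\gamma R$. To upgrade the $\gamma R$ to the claimed $\gamma R\log R$ in part $(i)$, I would instead keep \emph{all} indices $n$ from $1$ up to roughly $c\gamma^{1/2}R$ — actually up to a larger cutoff $n\lesssim \gamma R$ — and sum $\operatorname{dist}(\lambda_n,\br_+)/|\lambda_n|^{1/2}\asymp \gamma/\sqrt{(\pi n/R)^2+\gamma}$. The sum $\sum_n \gamma/\sqrt{(\pi n/R)^2+\gamma}$ over $1\le n\le C\gamma R$ behaves like $\gamma R\int_0^{C} dt/\sqrt{t^2+\gamma/R^2}\cdot(\ldots)$; carrying out the integral produces the logarithm $\log(R\cdot(\text{something}))\asymp\log R$ once $R$ is large, yielding $S_0(L_{\gamma,R})\gtrsim \gamma R\log R$, and then $2J\ge S_0$ by \eqref{jenvslt}.

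For part $(ii)$ the structure is the same but with the weight $|\lambda_n|^{(\ep-1)/2}$ rather than $|\lambda_n|^{-1/2}$. Using $\operatorname{dist}(\lambda_n,\br_+)\asymp\gamma$ and $|\lambda_n|\asymp (\pi n/R)^2+\gamma$ on the window $1\le n\le c\gamma R/\log R$ (the precise cutoff is dictated by where the perturbative localisation remains valid, which is why the stronger lower bound \eqref{ranger1-intro} on $R$ is needed — it ensures $\gamma R$ is large compared to the relevant power $(64\pi)^{2/\ep}$), one gets
\[
S_\ep(L_{\gamma,R})\gtrsim \gamma\sum_{n=1}^{c\gamma R/\log R}\bigl((\pi n/R)^2+\gamma\bigr)^{(\ep-1)/2}.
\]
Estimating the sum from below by its first $\asymp \gamma R/\log R$ terms, on which $(\pi n/R)^2\lesssim \gamma$ so that each summand is $\gtrsim \gamma^{(\ep-1)/2}$, gives $S_\ep\gtrsim \gamma\cdot\gamma^{(\ep-1)/2}\cdot \gamma R/\log R=\gamma^{(1+\ep)/2+\ldots}$; more carefully tracking the powers (some indices have $(\pi n/R)^2$ dominating, contributing the $(\g R)^{1+\ep}$ scaling) one lands on $(\gamma R)^{1+\ep}/\log^\ep R$ up to the constant $1/(256\pi\ep)$. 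The main obstacle throughout is the rigorous \emph{localisation of the zeros}: one must show that the exact solutions of the matching equation lie within a controlled distance of the approximate points $w\approx\pi n/R$, uniformly over a window of $\asymp\gamma R$ indices, and that no zeros are double-counted and none are missed — this is where the quantitative hypotheses on $R$ in terms of $\gamma$ (and, for $(ii)$, of $\ep$) enter, via Rouché's theorem applied to the matching equation on small circles around each $\pi n/R$. Once that is in place, the rest is the elementary sum estimate sketched above.
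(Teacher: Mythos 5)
Your overall strategy is the same as the paper's: solve the matching problem explicitly, pass to $w=\sqrt{z^2-i\gamma}$, localize zeros near $w\approx \pi n/R$ (the paper does this through a family of fixed-point equations and the contraction mapping principle rather than Rouch\'e's theorem, but that difference is cosmetic), and then sum $\mathrm{dist}(\lambda_n,\br_+)/|\lambda_n|^{(1-\varepsilon)/2}\asymp \gamma\bigl((\pi n/R)^2+\gamma\bigr)^{(\varepsilon-1)/2}$ over the localized eigenvalues. The genuine gap is the size of the index window you allow yourself. You take $n\lesssim\gamma R$ in part (i) and $n\lesssim \gamma R/\log R$ in part (ii); neither window can produce the claimed bounds. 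For (i), with cutoff $N$ the sum $\gamma\sum_{n\le N}\bigl((\pi n/R)^2+\gamma\bigr)^{-1/2}$ is of order $\gamma R+\frac{\gamma R}{\pi}\log\frac{\pi N}{\sqrt{\gamma}\,R}$, so $N\sim\gamma R$ yields only $\gamma R\log\bigl(O(\sqrt{\gamma})\bigr)=O(\gamma R)$ for fixed $\gamma$ — no $\log R$. (Your integral $\gamma R\int_0^C dt/\sqrt{t^2+\gamma/R^2}$, which does give $\log R$, implicitly corresponds to a cutoff $N\sim CR^2$, not $N\sim C\gamma R$; the two statements in your sketch are inconsistent.) For (ii) the defect is arithmetic: each summand is at most $\gamma\cdot\gamma^{(\varepsilon-1)/2}=\gamma^{(1+\varepsilon)/2}$, so a window of $O(\gamma R/\log R)$ indices gives at most $O\bigl(\gamma^{(3+\varepsilon)/2}R/\log R\bigr)$, which for large $R$ is far below the target $(\gamma R)^{1+\varepsilon}/(256\pi\varepsilon\log^{\varepsilon}R)$; no amount of ``tracking powers'' inside that window can close this.

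What is actually needed, and what the paper proves, is that the localization yields genuine eigenvalues for all indices $j$ up to $M_R=\bigl\lfloor\frac{1}{32\pi}\,\gamma R^2/\log R\bigr\rfloor$, i.e.\ quadratically many in $R$, giving eigenvalues $\lambda_j=w_j^2+i\gamma$ with $\gamma/2\le\im\lambda_j\le\gamma$ and $|\lambda_j|$ as large as $\sim(\gamma R/\log R)^2$. The $\log R$ in (i) and the $(\gamma R)^{1+\varepsilon}/\log^{\varepsilon}R$ in (ii) come precisely from these far-out eigenvalues. The technical core is therefore twofold: a localization (contraction) argument uniform in $j\in\bn$ once $R\ge 600(\gamma^{3/4}+\gamma^{-3/4})$, and the separate verification that for $j\le M_R$ the solution satisfies $-\gamma/2\le\im w_j^2\le 0$, so that $\lambda_j$ really lies in $\bc_+$ and is an eigenvalue — this is where the cutoff $\gamma R^2/\log R$ genuinely originates (consistent with the known upper bound $O(\gamma R^2/\log R)$ on the number of eigenvalues, so the window cannot be enlarged much). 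Your proposal neither identifies this window nor, as written, could it reach the stated estimates; the rest of the sketch (explicit secular equation, approximate zeros at $\pi n/R$, elementary summation) is sound and matches the paper.
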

 The estimate (\ref{eq:crit-lower-db}) shows that
$$ \sup_{0\not=q\in L^1(\br_+)} \frac{S_0(H_q)}{\|q\|_1}=+\infty. $$
An analogous, but slightly less explicit, result for Schr\"odinger operators on the whole real line has appeared in \cite{BogStam} (cf. Remark \ref{rem:result-S-B}). Notably, our proofs seem to use rather different methods.

The  main ideas in the proof of Theorem \ref{th:db-lower-intro} are as follows.
Starting from the Jost function of $L_{\g,R}$, we construct a countable family of equations,  each of which is in the form of a fixed point equation.
We are able to use the contraction mapping principle to prove that each equation has a unique solution
 corresponding to exactly one zero of the Jost function $e_+$  (or, more precisely, one zero of the analytic continuation of $e_+$ to $\bc$).

As it turns out, each equation has a convenient form that allows us to gain  quantitative information about its solution, 
hence about an individual zero of $e_+$.
Estimates for the different equations can be combined to obtain lower bounds for the sums $J(L_{\g,R})$ and $S_\ep(L_{\g,R})$ 
as well as other quantities, such as the number of eigenvalues (see Corollary \ref{col:number}).

Finally, note that, when applied to the Schr\"odinger operators $L_{\g,R}$ \eqref{eq:db-defn-intro}, the upper bound \eqref{eq:comp-upper-intro} gives 
the optimal asymptotic estimate (see Proposition~\ref{prop:two-sided})
\begin{equation}
  \label{eq:db-optimal}
  J(L_{\g,R}) = O(R \log R), \quad \text{as} \quad R \to \infty.
\end{equation}

\subsubsection*{Divergent Jensen sum}
As mentioned, while Theorem \ref{th:-Jensen-upper-intro} provides an upper bound for $J(H)$ for a wide range of potentials,
there exist integrable potentials to which it does not apply. It is therefore natural to ask whether or  not it is possible to
extend this upper bound to arbitrary integrable potentials. Our final result show that this is impossible.   

\begin{theorem}[=Theorem \ref{mainth}]
  \label{Jensen-infty}
  There exists a potential $q \in L^1(\br_+)$ such that $J(H_q) = \infty$. 
\end{theorem}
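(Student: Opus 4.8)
The plan is to construct $q$ as a sum of well-separated "bumps" $q = \sum_{n} q_n$, where each $q_n$ is supported in a window $[x_n, x_n + \ell_n]$ far from the origin, and to arrange that each bump contributes a fixed positive amount — or at least an amount bounded below by a non-summable sequence — to the Jensen sum $J(H_q)$, while the total $L^1$-norm $\sum_n \|q_n\|_1$ stays finite. The key tension is that $J$ scales like $\operatorname{Im}\sqrt{\lambda}$, so eigenvalues contribute little when $|\lambda|$ is large; to get a substantial contribution from a small-norm bump we need the associated eigenvalue(s) to sit close to the origin, with $\operatorname{dist}(\lambda,\br_+)/|\lambda|^{1/2}$ bounded below. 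I would model each $q_n$ on a scaled dissipative barrier $q_n(x) = i\gamma_n \chi_{[x_n, x_n+R_n]}(x)$ (translated far out), since Theorem \ref{th:db-lower-intro} already gives precise quantitative lower bounds on $S_0$ (hence on $J$, via \eqref{jenvslt}) for such potentials, and, crucially, the barrier is compactly supported so its influence is localised.

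The main technical obstacle is that $J(H_q)$ is \emph{not} additive over the bumps: placing a barrier far out at $[x_n, x_n+R_n]$ is not the same as placing it at $[0,R_n]$, because the Jost solution must propagate freely from $0$ to $x_n$ first. So the first real step is a \emph{localisation / perturbation lemma}: if $q$ and $\tilde q$ agree outside a bounded set and are both small in $L^1$ on a long initial segment $[0,x_n]$, then the zeros of the respective Jost functions in a fixed neighbourhood of the origin are close. Concretely, I would compare $e_+(z; q)$ with the Jost function of the single translated barrier using the integral equation \eqref{eq:schrod-eq}–\eqref{eq:Jost-asymp} for $e_+$, exploiting that on $[0,x_n]$ the potential tail $\sum_{m\ge n} q_m$ can be made to have arbitrarily small $L^1$-norm (choose $\sum_m \|q_m\|_1$ to converge fast), so that the Jost solution is a small perturbation of the free solution $e^{ixz}$ there, and on $[x_n,\infty)$ one is looking at a genuine single-barrier problem up to an exponentially-controlled phase/amplitude factor $e^{i x_n z}(1+o(1))$. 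By Rouché's theorem applied on a small fixed disc centred at the barrier's eigenvalue-square-root, each bump then forces at least one zero $z_n$ of $e_+$ with $\operatorname{Im} z_n \ge c_n > 0$ for an explicit $c_n$ coming from Theorem \ref{th:db-lower-intro}(i); and because the bumps live at disjoint, widely separated spatial locations and (if we choose $R_n \to \infty$ slowly while $\gamma_n \to 0$) at disjoint spectral locations, these zeros are distinct and contribute additively to $J(H_q) = \sum_{\lambda} \operatorname{Im}\sqrt{\lambda} = \sum_n \operatorname{Im} z_n + (\text{rest}) \ge \sum_n c_n$.

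It then remains to choose the parameters so that $\sum_n \gamma_n R_n < \infty$ (making $q \in L^1$) while $\sum_n c_n = \infty$. From \eqref{eq:crit-lower-db}, a single barrier $L_{\gamma_n, R_n}$ with $R_n \gtrsim \gamma_n^{3/4} + \gamma_n^{-3/4}$ gives $S_0 \gtrsim \gamma_n R_n \log R_n$, i.e.\ a \emph{per-eigenvalue} average contribution that, combined with the eigenvalue count of Corollary \ref{col:number}, is bounded below by roughly $\log R_n$ times something; more usefully, one can extract from that proof a single zero $z_n$ with $\operatorname{Im} z_n \gtrsim \gamma_n$ (the barrier creates an eigenvalue near $i\gamma_n$ for $R_n$ large). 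Taking, say, $\gamma_n = 1/(n \log^2 n)$ and $R_n$ just large enough to satisfy the hypothesis of Theorem \ref{th:db-lower-intro} and to keep $\gamma_n R_n$ summable (e.g.\ $R_n = \gamma_n^{-3/4}\log n$, so $\gamma_n R_n = \gamma_n^{1/4}\log n \to 0$ fast enough after adjusting exponents), and spacing the windows so the tail norms decay geometrically, yields $\sum_n \operatorname{Im} z_n \gtrsim \sum_n \gamma_n = \sum_n 1/(n\log^2 n) < \infty$ — which is the \emph{wrong} direction, so the genuinely delicate point is to push the lower bound on the individual $\operatorname{Im} z_n$ up to order $1/n$ or slower-decaying, using the $\log R_n$ gain in \eqref{eq:crit-lower-db} spread over few eigenvalues rather than many. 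The hard part will be this balancing act: verifying that one can simultaneously (a) keep $\|q\|_1$ finite, (b) keep the initial-segment tail norms small enough for the perturbation lemma, and (c) still harvest a divergent series $\sum_n c_n$ from the accumulated $\log R_n$ factors — which is exactly where the compactly-supported lower bound must be used in its sharp form rather than through the crude estimate $c_n \gtrsim \gamma_n$.
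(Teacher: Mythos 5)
Your overall strategy is the same as the paper's: a sum of widely separated dissipative barriers, a Jost-function/Rouch\'e localisation argument so that each barrier's eigenvalues survive in the full operator, and the lower bound of Theorem \ref{th:db-lower-intro} to make the contributions add up. But the decisive quantitative step is exactly the one you leave open, and the route you propose for closing it cannot work. You reduce each bump's contribution to a \emph{single} zero $z_n$ and hope to push $\im z_n$ up to a non-summable sequence. This is impossible: by the spectral enclosure $|\lambda|\le\|q\|_1^2$ (applied to the single barrier, or to the full potential after the perturbation step), any one eigenvalue of the barrier $i\gamma_n\chi$ of width $R_n$ satisfies $\im\sqrt{\lambda}\le\sqrt{|\lambda|}\le\gamma_n R_n$, so one zero per bump yields at most $\sum_n\gamma_n R_n$, which must be finite whenever $q\in L^1$. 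The ``$\log R_n$ gain'' in \eqref{eq:crit-lower-db} comes precisely from the \emph{many} ($\asymp\gamma_n R_n^2/\log R_n$, cf.\ Corollary \ref{col:number}) eigenvalues of each barrier, not from a few eigenvalues with boosted imaginary parts; your remark that one should spread the gain ``over few eigenvalues rather than many'' is the opposite of what is needed. The paper keeps the whole eigenvalue cloud of each barrier, uses $J(L_{\gamma_n,R_n})\ge\frac1{32\pi}\gamma_n R_n\log R_n$, and resolves the balancing act by taking $R_n$ at (a multiple of) its minimal admissible size $R_n\asymp\gamma_n^{-3/4}$ with $\gamma_n=(n\log^2(n+2))^{-4}$: then $\gamma_n R_n\asymp(n\log^2 n)^{-1}$ is summable while $\gamma_n R_n\log R_n\asymp\log n\,(n\log^2 n)^{-1}$ is not. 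Without this (or an equivalent) choice, and without abandoning the one-zero-per-bump framework, your construction does not produce a divergent Jensen sum.

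Two further points where your sketch would need repair, though they are not fatal in principle. First, when a compactly supported bump is shifted to infinity on the half-line, the zeros of $e_+(0,\cdot)$ near a fixed point converge to zeros of the \emph{whole-line} Wronskian of the bump (the Dirichlet condition at $0$ decouples in the limit), not to the eigenvalue parameters of a half-line Dirichlet barrier restarted at $x_n$; this is why the paper works with symmetric whole-line barriers $i\gamma_n\chi_{[-R_n,R_n]}$ and transfers the half-line lower bound through the even-extension Proposition \ref{evenext}. Second, since the potential is an infinite sum, ``additivity'' of the contributions requires an iterative scheme in which, at stage $n$, all previously captured eigenvalues move by a summable amount (the paper's errors $3(\pi n)^{-2}\im\sqrt{\lambda_j}$, giving $\im\sqrt{\mu_j}\ge\frac12\im\sqrt{\lambda_j}$ in the limit), followed by a truncation lemma (Hurwitz) to pass from the partial sums to $q_\infty$ while respecting algebraic multiplicities; your ``geometrically decaying tail norms'' remark points in this direction but the uniform-in-$n$ persistence of earlier eigenvalues, with multiplicity, has to be made explicit.
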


The proof of this result uses two crucial ingredients. The first is an idea of B\"ogli \cite{BogAcc}, which allows one to construct a Schr\"odinger operator
whose eigenvalues approximate the union of the eigenvalues of a given sequence of Schr\"odinger operators $\cl_n$, $n \in \bn$. 
The second is the lower bound of Theorem \ref{th:db-lower-intro} for the Jensen sum $J(L_{\gamma,R})$.
Indeed, the given sequence of Schr\"odinger operators $\cl_n$ in our case shall have dissipative barrier potentials.
Note that the explicit condition $R \geq 600(\gamma^{3/4} + \gamma^{-3/4})$ in Theorem \ref{th:db-lower-intro} plays an important role in
Theorem \ref{Jensen-infty}.

\begin{remark*} ($\br_+$ vs $\br$).
Given a potential $q\in L^1(\mathbb{R}_+)$, denote by $Q$ its even extension on the whole line. 
By Proposition \ref{evenext} below, there is inclusion $\sigma_d(H_q)\subset\sigma_d(H_Q)$, counting multiplicities,
for the discrete spectra of Schr\"odinger--Dirichlet operator $H_q$ on $L^2(\br_+)$ and Schr\"odinger operator $H_Q$ on $L^2(\br)$.
Hence, the inequality
\begin{equation}
  \label{eq:R-vs-R+}
\sum_{\l\in\s_d(H_q)} \Phi(\l) \leq \sum_{\l\in\s_d(\ch_Q)} \Phi(\l), \qquad q\in L^1(\br_+),
\end{equation} 
holds with an arbitrary nonnegative function $\Phi$ on the complex plane. Thereby, upper bounds, such as \eqref{eq:th-0p1}, for $H_q$ can be derived 
from the corresponding results for the operator $H_Q$. As an example, the spectral enclosure \cite{FLS} mentioned above is a direct consequence of
the result for the whole line \cite[Theorem 4]{Davies1}.

Several inequalities of Lieb--Thirring-type for Schr\"odinger operators with complex potentials on $L^2(\br)$ are known nowadays, but neither covers
completely the main results of the paper. The result of Frank and Sabin \cite[Theorem 16]{FraSab} in dimension one is \eqref{eq:th-0p1} with $\ep>1$. 
The case $\ep=1$ is a consequence of \cite[Theorem 1.3]{Frank3}. The result of Demuth, Hansmann and Katriel \cite[Corollary 3]{DHK} in dimension one reads
\begin{equation*}
\sum_{\lambda \in \sigma_d(\ch_Q)} \frac{\mathrm{dist}^{p+\ep}(\lambda,\br_+)}{|\lambda|^{\frac12+\ep}} \leq C(p,\ep) \|Q\|_{L^p(\br)}^p, \qquad p\ge\frac32, \quad \ep \in (0,1).
\end{equation*}
Recently, B\"ogli \cite{BogNew} has extended this result considerably by including a much wider class of sums.
The results of both DHK and B\"ogli are not applicable for arbitrary $L^1$ potentials, hence do not imply Theorem \ref{th:upper-non-crit-intro}.

We believe that the results for Schr\"odinger operators with complex potentials on $L^2(\br)$, analogous to our upper bounds,
can be obtained along the same line of reasoning by using similar methods. The study of this problem should be carried out elsewhere. 
\end{remark*}

\subsubsection*{Outline of the paper}
 In Section 1, we focus on upper bounds for the Lieb--Thirring sums with an arbitrary potential $q\in L^1(\br_+)$,
and for the Jensen sums with potentials $q\in Q_a$. Section 2 is devoted to the spectral analysis of Schr\"odinger operators with dissipative barrier
potentials and to the lower bounds for the Lieb-Thirring and Jensen sums with such potentials.  In Section 3 we prove Theorem~\ref{Jensen-infty}.

\subsection*{Acknowledgements} The authors thank S. B\"ogli and J.-C. Cuenin for helpful discussions
and R. Frank for enlightening comments which motivated us to include Proposition \ref{generltsup}. 
AS is supported by an EPSRC studentship EP/R513003/1 and thanks his PhD supervisors  M. Marletta and J. Ben-Artzi for helpful discussions and guidance.

\section{Classes of potentials and inequalities for sums of eigenvalues}

As we mentioned earlier in the introduction, a complex number $\z \in \bc_+$ belongs to the zero set $Z(e_+)$ of the Jost function 
if and only if $\lambda=\z^2\in\s_d(H)$, and the zero multiplicity coincides with the algebraic multiplicity of the corresponding eigenvalue.   
Therefore, the divisor $Z(e_+)$ (zeros counting multiplicities) has a precise spectral interpretation.
In this section, we study this divisor using various results from complex analysis and hence obtain bounds for sums of 
Lieb-Thirring and Jensen types. Throughout the section, we shall let
\begin{equation*}
  \bc_+^0:=\{z \in \bc : \im z\ge0, \ z\not=0\}.
\end{equation*}

\subsection{Bounds for Lieb--Thirring sums}
Recall that the Lieb--Thirring sum for a Schr\"odinger--Dirichlet operator $H$ is given by
$$ S_\ep(H)=\sum_{\l\in\s_d(H)} \frac{{\rm dist}(\l,\br_+)}{|\l|^{\frac{1-\ep}2}}\,, \qquad 0\le\ep<1. $$
Our first result gives an upper bound for $S_\ep(H)$ in the non-critical case of $\ep>0$ and arbitrary $q \in L^1(\br_+)$.

\begin{theorem}[= Theorem \ref{th:upper-non-crit-intro}]\label{quanbou}
For every $\ep>0$, there exists a constant $K(\ep)>~0$, depending only on $\ep$, such that
\begin{equation}\label{ltcla}
S_\ep(H_q)\le K(\ep)\|q\|_1^{1+\ep}.
\end{equation}
\end{theorem}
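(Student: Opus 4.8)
The plan is to translate the eigenvalue problem for $H_q$ into a zero-counting problem for the Jost function $e_+$ on the upper half-plane $\bc_+$, transplant everything to the unit disk $\bd$ via a conformal map, and then apply the Borichev--Golinskii--Kupin theorem \cite{BGK1} on Blaschke-type conditions for analytic functions with controlled boundary growth. First I would recall the classical estimates for the Jost function: for $q\in L^1(\br_+)$ one has the bound $|e_+(z)-1|\le C\|q\|_1/(1+|z|)\,e^{\|q\|_1}$ (or a variant of this form) for $z\in\bc_+$, coming from the Volterra integral equation satisfied by $e_+(x,z)$ and Gronwall-type iteration. In particular $e_+$ is bounded on $\bc_+$, analytic there, continuous up to $\br$, and $e_+(z)\to 1$ as $|z|\to\infty$; after normalising we may take $\|e_+\|_\infty\le \exp(c\|q\|_1)$ for a numerical constant $c$. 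The zeros $\z$ of $e_+$ in $\bc_+$ are exactly the square roots $\sqrt{\lambda}$ of eigenvalues $\lambda\in\sigma_d(H_q)$, with matching multiplicities, so $\mathrm{dist}(\lambda,\br_+)$ and $|\lambda|$ translate into $\im(\z^2)$-type and $|\z|^2$-type quantities; using \eqref{dhk09} the summand $\mathrm{dist}(\lambda,\br_+)/|\lambda|^{(1-\ep)/2}$ is comparable to $|\z|^{1+\ep}\,\im\sqrt{\lambda}/|\z|\cdot(\dots)$, i.e.\ essentially $|\z|^{\ep}\,\im(\z^2)/|\z| \asymp |\z|^{\ep-1}\im(\z^2)$, but the cleanest route is to write the summand in terms of $\z$ directly: $\mathrm{dist}(\z^2,\br_+)\asymp |\z|\,\im\z\cdot(\text{bounded factor})$, so $S_\ep(H_q)\asymp \sum_\z |\z|^{\ep}\,\frac{\im\z}{|\z|}\cdot|\z|$, hmm — more carefully, $\mathrm{dist}(\z^2,\br_+)\le 2|\z|\,\im\z$ and $|\z^2|=|\z|^2$, giving the summand $\lesssim |\z|^{1+\ep}\cdot \frac{\im\z}{|\z|}= |\z|^{\ep}\im\z$ near infinity but with an extra factor $|\z|^{1-\ep}$ issue near $0$; I will handle the region near $0$ separately using the spectral enclosure $|\lambda|\le\|q\|_1^2$, i.e.\ $|\z|\le\|q\|_1$.

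Next I would set up the conformal map. The map $w\mapsto \z = i\frac{1-w}{1+w}$ (or its inverse) sends $\bd$ onto $\bc_+$, sending $w=1\mapsto \z=0$ and $w=-1\mapsto \z=\infty$. Define $g(w) := e_+\bigl(\z(w)\bigr)$, an analytic function on $\bd$, bounded by $\exp(c\|q\|_1)$, with $g(-1)=1$ (the point at infinity). Its zeros $w_k$ in $\bd$ correspond to the $\z_k$. The quantities $|\z_k|$ and $\im\z_k$ must be rewritten in terms of $1-|w_k|$ and the distances $|1-w_k|$, $|1+w_k|$ to the two special boundary points: one has $\im\z \asymp \dfrac{1-|w|^2}{|1+w|^2}$ and $|\z|\asymp\dfrac{|1-w|}{|1+w|}$. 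Consequently the Blaschke-type sum $\sum_\z |\z|^{\ep}\,\im\z$ becomes, up to constants, $\sum_k (1-|w_k|)\,\dfrac{|1-w_k|^{\ep}}{|1+w_k|^{2+\ep}}$, which is exactly the kind of one-sided weighted Blaschke sum that \cite{BGK1} controls in terms of the growth rate of $\log|g(w)|$ near the boundary singularities — here the only singularity is at $w=-1$ (from $\z\to\infty$), and the growth of $g$ there is governed by the classical estimate $|e_+(z)-1| = O(1/|z|)$, hence $|g(w)|$ stays bounded (indeed $\to 1$) as $w\to -1$; the genuine boundary behaviour that feeds the theorem is that $\log(1/|g(w)|)$ can blow up like a negative power of $|1+w|$ only through the $e^{\|q\|_1}$ factor, which is a constant. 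So the BGK hypothesis is satisfied with exponents that yield precisely the weight $|1+w|^{-(2+\ep)}$ (any $\ep>0$) but fail at $\ep=0$ (critical case) — this is the structural reason the non-critical case works for all $q\in L^1$ while $\ep=0$ does not.

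The key steps in order: (1) recall/cite the Volterra estimate giving $e_+$ analytic and bounded on $\bc_+$ with $\|e_+\|_{\infty,\bc_+}\le e^{c_0\|q\|_1}$ and $|e_+(z)-1|\le c_0\|q\|_1(1+|z|)^{-1}e^{c_0\|q\|_1}$; (2) record the identification of zeros with $\sqrt{\sigma_d(H_q)}$ and the two-sided bound \eqref{dhk09}, and peel off the finitely-much or small-$|\z|$ contribution using $|\z|\le\|q\|_1$; (3) push forward under $\z=i(1-w)/(1+w)$ to get $g\in H^\infty(\bd)$ with explicit sup-norm bound, translating the target sum into $\sum_k(1-|w_k|)|1-w_k|^{\ep}|1+w_k|^{-(2+\ep)}$; (4) invoke \cite[Theorem/Corollary]{BGK1} with the growth exponent matched to $\ep$, obtaining $\sum_k(\dots)\le C(\ep)\,(\log\|g\|_\infty + \text{const})^{?}\le C(\ep)(1+\|q\|_1)^{?}$; (5) unwind constants to recover $S_\ep(H_q)\le K(\ep)\|q\|_1^{1+\ep}$, being careful that the homogeneity degree $1+\ep$ comes out right — this is where combining the BGK output (which is naturally polynomial in $\log\|g\|_\infty\sim\|q\|_1$) with the enclosure $|\z|\le\|q\|_1$ produces the exact power $\|q\|_1^{1+\ep}$ rather than merely $\|q\|_1^{1+\ep}$ up to logs. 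I expect the main obstacle to be precisely this last bookkeeping: extracting the clean power $\|q\|_1^{1+\ep}$ (no stray logarithms, correct $\ep$-dependence of $K(\ep)$) from the BGK inequality, which in its raw form gives a bound with a logarithm of the sup-norm and possibly a different normalisation of the weight; reconciling these requires splitting the sum at $|\z|\sim 1$ versus $|\z|\gtrsim 1$ and using the spectral enclosure to convert $|\z|^{\ep-1}$ factors (problematic for small $|\z|$ when $\ep<1$) into harmless constants times powers of $\|q\|_1$, together with a scaling argument $q(x)\mapsto t^2 q(tx)$ to fix the homogeneity and thereby pin down that $K(\ep)$ depends on $\ep$ alone.
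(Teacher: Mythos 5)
Your overall architecture (zeros of the Jost function, conformal transplant to $\bd$, Borichev--Golinskii--Kupin) is exactly the paper's, but step (1) of your plan rests on an estimate that is false for general $q\in L^1(\br_+)$, and the error propagates through steps (3)--(5). For a merely integrable potential the Jost function is \emph{not} bounded on $\bc_+$, and $|e_+(z)-1|\le c_0\|q\|_1(1+|z|)^{-1}e^{c_0\|q\|_1}$ does not hold; the correct Volterra estimate (the one the paper takes from Stepin) is $|e_+(z)-1|\le \exp\{\|q\|_1/|z|\}-1$, which blows up as $z\to 0$ (a uniform bound would require a first moment $\int_0^\infty x|q(x)|\,dx<\infty$). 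Hence the boundary singularity feeding the BGK hypothesis sits at the image of $z=0$, where $\log|f(w)|\lesssim |1+w|^{-1}$ in the paper's normalisation, and not at the image of $z=\infty$, where $e_+\to 1$ makes everything tame; you have the singularity structure backwards. Note also that if your bounded-$e_+$ claim were true, the ordinary Blaschke condition would already control the critical sum $\ep=0$, contradicting Theorem \ref{mainth}, so the claim cannot be repaired. A second concrete obstruction: the sum you propose to extract from BGK, $\sum_k(1-|w_k|)\,|1-w_k|^{\ep}\,|1+w_k|^{-(2+\ep)}$, carries a \emph{negative} power of the distance to a boundary point, whereas the zero sums controlled in \cite{BGK1} involve only nonnegative powers of such distances (weights vanishing at the singular points); no choice of growth exponent hands you that sum.

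What is missing is the rescaling that makes the whole argument close. Set $y=\|q\|_1/\log\frac{3}{2}$, so that $|e_+(iy)|\ge\frac12$, and work with $g(z)=e_+(yz)/e_+(iy)$, normalised at the \emph{interior} point $z=i$ (your normalisation ``$g(-1)=1$'' is at a boundary point, which the BGK theorem does not accept). After this rescaling, $\log|g(z)|\le \log 2\,(1+|z|)/|z|$ with an absolute constant, so on the disk BGK yields $\sum_{\eta}(1-|\eta|)|1+\eta|^{\ep}\le K_1(\ep)$ with no dependence on $q$ at all. The power $\|q\|_1^{1+\ep}$ then emerges purely from undoing the substitution $\z=y\xi$, and the spectral enclosure $|\z|\le\|q\|_1$ (equivalently $|\xi|\le\log\frac{3}{2}$) is what renders the residual weights $1+|\xi|^2$ and $|\xi+i|^{\ep}$ harmless absolute constants --- not, as in your step (5), a device for converting $\log\|g\|_\infty\sim\|q\|_1$ (a quantity which is not even finite here) into the right power, nor an a posteriori scaling argument. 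So the strategy is the right one, but the two load-bearing ingredients --- the correct Jost estimate with its blow-up at $z=0$, and the $y$-rescaling performed \emph{before} applying BGK --- are absent, and without them the proof does not go through.
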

\begin{proof}
A key ingredient of the proof is the following well-known inequality for the Jost function (see, e.g., \cite[Lemma 1]{Stepin})
\begin{equation}\label{ineqjost}
|e_+(z)-1|\le \exp\left\{\frac{\|q\|_1}{|z|}\right\}-1, \qquad z\in\bc_+^0.
\end{equation}

Let
$$ y:=\frac{\|q\|_1}{\kappa}>0, \qquad \kappa:=\log\frac32. $$ 
By \eqref{ineqjost}, 
$$ |e_+(iy)-1|\le\frac12, \qquad |e_+(iy)|\ge\frac12. $$

Consider the function
$$ g(z):=\frac{e_+(yz)}{e_+(iy)}\,, \quad z\in\bc_+, \qquad g(i)=1. $$
By the definition of $y$, we have
\begin{equation*}
\begin{split}
|g(z)| &\le 2|e_+(yz)|\le 2\exp\left\{\frac{\|q\|_1}{y|z|}\right\}=2\exp\left\{\frac{\kappa}{|z|}\right\}, \\
\log|g(z)| &\le \log 2+\frac{\kappa}{|z|}< \log 2\,\frac{1+|z|}{|z|}\,.
\end{split}
\end{equation*}

To go over to the unit disk, we introduce a new variable,
\begin{equation}\label{eq:var-w}
  w=w(z)=\frac{z-i}{z+i}\,:\,\bc_+ \ \rightarrow \ \bd, \qquad z=z(w)=i\,\frac{1+w}{1-w}\,.
\end{equation}
Write $f(w):=g(z(w))$. An elementary inequality
$$ \frac2{1+|z|}\le |1-w(z)|\le\frac{2\sqrt2}{1+|z|}, \qquad z \in \bc_+, $$
gives the following bound for $f$
\begin{equation}\label{bgkass} 
\log|f(w)|\le \frac{2 \sqrt{2}\log 2}{|1+w|}\,, \qquad f(0)=1. 
\end{equation}

The Blaschke-type conditions for zeros of such analytic functions in $\bd$ are obtained in \cite{BGK1} (see \cite{BGK2} for some advances)
$$ \sum_{\eta\in Z(f)} (1-|\eta|)|1+\eta|^\ep\le K_1(\ep), \qquad \forall\ep>0, $$
where $K_1(\ep)>0$ depends only on $\ep$. Going back to the upper half-plane and using another elementary inequality
\begin{equation}\label{1eq:elem-ineq-z-w}
  \frac{\im z}{1+|z|^2}\le 1-|w|\le \frac{8\,\im z}{1+|z|^2}\,,
\end{equation}
we come to the following relation for the divisor $Z(g)$
\begin{equation*}
\sum_{\xi\in Z(g)} \frac{\im\xi}{1+|\xi|^2}\,\frac{|\xi|^\ep}{|\xi+i|^\ep}\le K_2(\ep).
\end{equation*}
But $\xi\in Z(g)$ is equivalent to $\z=y\xi\in Z(e_+)$, so
$$ \left(\frac{\kappa}{\|q\|_1}\right)^{1+\ep}\,
\sum_{\z\in Z(e_+)}\frac{\im\z\,|\z|^\ep}{\left\{1+\left(\frac{\kappa|\z|}{\|q\|_1}\right)^2\right\}\,
\left|\frac{\kappa \z}{\|q\|_1}+i\right|^\ep}\le K_2(\ep). $$ 
 The aforementioned spectral enclosure result ensures that $|\z|\le \|q\|_1$ for $\z\in Z(e_+)$. It follows that both factors in the 
denominator are bounded from above by some constants depending only on $\ep$. We come to
\begin{equation}\label{ltdivis}
\sum_{\z\in Z(e_+)} (\im\z)\,|\z|^\ep\le K(\ep)\|q\|_1^{1+\ep},
\end{equation}
where a positive constant $K$ depends only on $\ep$.

 To complete the proof, we employ the inequality \eqref{dhk09}, mentioned in the introduction. So, \eqref{ltcla} follows.
\end{proof}

\subsection{Classes of potentials and Jensen sums}

In the rest of the section, we study the behavior of the discrete spectrum for Schr\"odinger operators within
special classes of potentials.

Let $a$ be a monotonically increasing and locally integrable, nonnegative function on $\br_+$. 
Consider the classes of complex-valued potentials 
\begin{equation}
  \label{eq:Q-a}
Q_a:=\{q \in L^1(\br_+):\ \int_0^\infty a(x)|q(x)|dx<\infty \}.
\end{equation}
The weight function $a$ is fixed in the sequel, and dependence of constants on $a$ is sometimes omitted.

Define a function $\hat{a}$ on $\br_+$ by
\begin{equation*}
\hat{a}(x):=\frac{x}{a(x)}\,, \qquad x\in\br_+,
\end{equation*}
and put
\begin{equation*}
\omega_a(x,z):=\hat a\left(\frac1{|z|}\right)\,\int_x^\infty a(t)|q(t)|dt, \qquad x\in\br_+, \quad z\in\bc_+^0.
\end{equation*}

\begin{proposition}\label{jost}
Assume that both $a$ and $\hat a$ are monotonically increasing functions on $\br_+$. Then the Jost solution admits the bound
\begin{equation}\label{jostbound}
|e^{-izx}e_+(x,z)-1|\le \exp\left(\omega_a(x,z)\right)-1, \qquad x\in\br_+, \quad z\in\bc_+^0.
\end{equation}
\end{proposition}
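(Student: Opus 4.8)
The plan is to derive \eqref{jostbound} from the Volterra integral equation satisfied by the Jost solution. Setting $m(x,z):=e^{-izx}e_+(x,z)$, the equation \eqref{eq:schrod-eq} together with the normalisation \eqref{eq:Jost-asymp} is equivalent to
$$m(x,z) = 1 + \int_x^\infty K(x,t,z)\,q(t)\,m(t,z)\,dt, \qquad K(x,t,z):=\frac{\sin\bigl(z(t-x)\bigr)}{z}\,e^{iz(t-x)}=\frac{e^{2iz(t-x)}-1}{2iz},$$
for $x\ge0$, $z\in\bc_+^0$. The first step is to record two elementary bounds on the kernel. Writing $K(x,t,z)=\int_0^{t-x}e^{2izu}\,du$ and using $\im z\ge0$ gives $|K(x,t,z)|\le t-x$, while $|K(x,t,z)|\le(|e^{2iz(t-x)}|+1)/(2|z|)\le 1/|z|$. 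Hence $|K(x,t,z)|\le\min(t-x,\,1/|z|)\le\min(t,\,1/|z|)$ for $0\le x\le t$.

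The crucial step is to merge these two bounds into a single bound carrying the weight $a$, via the principle \eqref{principle}. Since $a$ and $\hat a$ are both monotonically increasing, applying \eqref{principle} with $A=|K(x,t,z)|$, $X_1=t$, $X_2=1/|z|$ yields
$$|K(x,t,z)|\le a(t)\,\hat a\!\left(\tfrac1{|z|}\right), \qquad 0\le x\le t,\quad z\in\bc_+^0$$
(the case $K=0$ being trivial). Feeding this into the integral equation, abbreviating $\Phi(x):=\int_x^\infty a(t)|q(t)|\,dt$ and $c:=\hat a(1/|z|)$ so that $\omega_a(x,z)=c\,\Phi(x)$, I consider the Neumann series $m=\sum_{n\ge0}m_n$ with $m_0\equiv1$ and $m_{n+1}(x)=\int_x^\infty K(x,t,z)q(t)m_n(t)\,dt$, and claim the termwise estimate $|m_n(x)|\le\omega_a(x,z)^n/n!$ for all $n\ge0$.

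This claim is proven by induction. The base case is clear, and assuming $|m_n(t)|\le(c\Phi(t))^n/n!$ we get, using $\Phi'(t)=-a(t)|q(t)|$ to evaluate $\int_x^\infty a(t)|q(t)|\Phi(t)^n\,dt=\Phi(x)^{n+1}/(n+1)$,
$$|m_{n+1}(x)|\le c\int_x^\infty a(t)|q(t)|\frac{(c\Phi(t))^n}{n!}\,dt=\frac{c^{n+1}}{n!}\cdot\frac{\Phi(x)^{n+1}}{n+1}=\frac{\omega_a(x,z)^{n+1}}{(n+1)!}.$$
Summing over $n\ge1$ gives $|m(x,z)-1|\le\sum_{n\ge1}\omega_a(x,z)^n/n!=e^{\omega_a(x,z)}-1$, which is \eqref{jostbound} (the bound being vacuous when $\omega_a(x,z)=\infty$). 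It remains to note that the same termwise estimates show the Neumann series converges absolutely and locally uniformly, that its sum solves the integral equation, and that this solution is unique (standard Volterra theory), so it coincides with $e^{-izx}e_+(x,z)$.

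The main obstacle is not a single hard estimate but the bookkeeping: applying \eqref{principle} with exactly the right split $X_1=t$, $X_2=1/|z|$ so that the weight $a(t)$ attaches to $|q(t)|$ (producing $\|q\|_a$-type integrals) while the entire $z$-dependence is carried by $\hat a(1/|z|)$, and then checking that the resulting iteration closes with the factorial gain. One should also take mild care with possible degeneracy of $a$ near $0$ so that $\hat a(1/|z|)$ is well defined and nonnegative, but this does not affect the argument.
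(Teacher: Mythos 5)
Your proof is correct and follows essentially the same route as the paper: the Volterra integral equation for $e^{-izx}e_+(x,z)$, the kernel bound $\min(t-x,1/|z|)$ combined with the principle \eqref{principle} to attach $a(t)$ to $|q(t)|$ and $\hat a(1/|z|)$ to the $z$-dependence, and the successive-approximation induction with the factorial gain via $\Phi'(t)=-a(t)|q(t)|$. The only differences are cosmetic (Neumann series for $m$ starting at $m_0\equiv1$ versus the paper's iteration for $f=m-1$, and using $a(t)$ directly instead of $a(t-x)\le a(t)$), so nothing further is needed.
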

\begin{proof}
We follow the arguments of M.A. Naimark for the classical case $a\equiv 1$. 

The Jost solution is known to satisfy the Schr\"odinger integral equation
\begin{equation*}
e_+(x,z)=e^{ixz}+\int_x^\infty \frac{\sin((t-x)z)}{z}q(t)e_+(t,z)dt.
\end{equation*}
The latter can be resolved by the successive approximations method.

Introduce a new unknown function
$$ f(x,z):=e^{-ixz}e_+(x,z)-1, $$
which satisfies
\begin{equation}\label{newvar}
\begin{split}
f(x,z) &= g(x,z)+\int_x^\infty k(t-x,z)q(t)f(t,z)dt, \\
 k(u,z)&:=\frac{\sin uz}{z}\,e^{iuz}, \quad g(x,z) :=\int_x^\infty k(t-x,z)q(t)dt.
\end{split}
\end{equation}
Let
$$ f_1(x,z):=g(x,z), \quad f_{n+1}(x,z)= \int_x^\infty k(t-x,z)q(t)f_n(t,z)dt, \quad n\in\bn. $$

In view of an elementary bound for the kernel $k$
$$ |k(u,z)|\le \min\left(u, \frac1{|z|}\right), $$
and monotonicity of $a$ and $\hat a$, we see that
\begin{equation}\label{monot}
|k(u,z)|=\hat a(|k(u,z)|)\,a(|k(u,z)|)\le \hat a\left(\frac1{|z|}\right)\,a(u),
\end{equation}
 cf. \eqref{principle}.

We first estimate $f_1$. By \eqref{monot},
$$ |f_1(x,z)|\le \int_x^\infty |k(t-x,z)||q(t)|dt\le \hat a\left(\frac1{|z|}\right)\,\int_x^\infty a(t-x)|q(t)|dt\le \omega_a(x,z). $$
Assume for induction that
\begin{equation}\label{induc}
|f_j(x,z)|\le \frac{\omega_a^j(x,z)}{j!}, \qquad j=1,2,\ldots,n.
\end{equation}
We compute
\begin{equation*}
\begin{split}
\frac{d}{dx}\Bigl[\omega_a^{n+1}(x,z)\Bigr] &=(n+1)\,\omega_a^n(x,z)\,\frac{d}{dx}\Bigl[\omega_a(x,z)\Bigr] \\
&=-(n+1)\,\omega_a^n(x,z)\,\hat a\left(\frac1{|z|}\right)\,a(x)|q(x)|,
\end{split}
\end{equation*} 
and so
\begin{equation*}
\begin{split}
|f_{n+1}(x,z)| &\le \int_x^\infty |k(t-x,z)||q(t)|\frac{\omega_a^n(t,z)}{n!}dt \\
&\le \frac1{n!}\,\hat a\left(\frac1{|z|}\right)\,\int_x^\infty a(t)|q(t)|\,\omega_a^n(t,z)dt \\
&= -\frac1{(n+1)!}\,\int_x^\infty\, \frac{d}{dt}\Bigl[\omega_a^{n+1}(t,z)\Bigr]\,dt=\frac{\omega_a^{n+1}(x,z)}{(n+1)!}\,.
\end{split}
\end{equation*}
Hence, \eqref{induc} indeed holds for all $n\in\bn$.

It follows that the solution $f$ to \eqref{newvar}, which is known to be unique, satisfies
$$ |f(x,z)|\le \sum_{n=1}^\infty |f_n(x,z)|\le \exp\bigl(\omega_a(x,z)\Bigr)-1 $$
(the latter series converges absolutely and uniformly on the compact subsets of $(x\in\br_+, z\in\bc_+^0)$). 
The bound \eqref{jostbound} follows.
\end{proof}

The above result for $a(x)=x^\alpha$, $\alpha\in[0,1]$, is due to Stepin \cite[Lemma~1]{Stepin}.
The bound for the Jost function $e_+(z)=e_+(0,z)$ is \eqref{jostbound} with $x=0$:
\begin{equation}\label{jostfunc}
|e_+(z)-1|\le \exp\left\{\hat a\left(\frac1{|z|}\right)\,\|q\|_a\right\}-1, \quad \|q\|_a:=\int_0^\infty a(t)|q(t)|dt.
\end{equation}

The following spectral enclosure result is a simple consequence of \eqref{jostfunc} and the basic property of zeros of $e_+$.

\begin{corollary}
Under the hypothesis of Proposition \ref{jost}, define the value
\begin{equation*}
\rho=\rho(a,q):=\inf\Bigl\{t>0: \ \hat a(\sqrt{t})\ge\frac{\log 2}{\|q\|_a}\Bigr\}.
\end{equation*}
Then the discrete spectrum $\sigma_d(H_q)$ is contained in the closed disk 
$$ \sigma_d(H_q)\subset B(0,\rho^{-1}). $$
The case $\hat a(\infty)<\log 2\,\|q\|_a^{-1}$ implies that $\rho=\infty$, and so the discrete spectrum is empty.

\end{corollary}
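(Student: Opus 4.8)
The plan is to read off the eigenvalue enclosure directly from the bound \eqref{jostfunc} for the Jost function, together with the fact that $\lambda = \z^2 \in \sigma_d(H_q)$ precisely when $\z$ is a zero of $e_+$ in $\bc_+$. First I would observe that if $z \in \bc_+^0$ satisfies $\hat a(1/|z|)\,\|q\|_a < \log 2$, then \eqref{jostfunc} forces $|e_+(z)-1| < 2-1 = 1$, so $e_+(z) \neq 0$. Taking the contrapositive, every zero $\z$ of $e_+$ must obey
\[
\hat a\Bigl(\tfrac1{|\z|}\Bigr)\,\|q\|_a \;\ge\; \log 2 .
\]

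Next I would translate this into a statement about $\lambda$. Writing $\lambda = \z^2$, so that $|\z| = |\lambda|^{1/2}$, and putting $t_\lambda := |\lambda|^{-1}$, the inequality above becomes $\hat a(\sqrt{t_\lambda}) \ge \log 2 / \|q\|_a$; in other words $t_\lambda$ belongs to the set whose infimum defines $\rho$. Since $\hat a$ is monotonically increasing (this is part of the hypothesis of Proposition \ref{jost}), that set is a half-line unbounded to the right, so its infimum $\rho$ is a lower bound for each of its members. Hence $|\lambda|^{-1} = t_\lambda \ge \rho$, i.e. $|\lambda| \le \rho^{-1}$, which is exactly the asserted inclusion $\sigma_d(H_q) \subset B(0,\rho^{-1})$.

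Finally, for the degenerate case, suppose $\hat a(\infty) < \log 2\,\|q\|_a^{-1}$. Then, by monotonicity of $\hat a$, we have $\hat a(1/|z|)\,\|q\|_a \le \hat a(\infty)\,\|q\|_a < \log 2$ for every $z \in \bc_+^0$, so the first step shows $e_+$ has no zeros whatsoever and $\sigma_d(H_q) = \emptyset$; equivalently the set defining $\rho$ is empty, $\rho = +\infty$ by the usual convention, and the enclosing disk $B(0,\rho^{-1})$ degenerates. There is no genuine obstacle in this argument: the only points that need a little care are the substitution $|z| = |\lambda|^{1/2}$ and keeping track of the direction of the monotonicity of $\hat a$ when converting the lower bound on $\hat a(1/|\z|)$ into the upper bound on $|\lambda|$.
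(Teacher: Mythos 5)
Your argument is correct and is precisely the one the paper intends: the corollary is stated there as a ``simple consequence'' of the Jost-function bound \eqref{jostfunc} together with the identification of eigenvalues with zeros of $e_+$, which is exactly the contrapositive-plus-substitution reasoning you carried out. The handling of the degenerate case $\hat a(\infty)<\log 2\,\|q\|_a^{-1}$ is also as intended, so there is nothing to add.
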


As a matter of fact, in view of \cite{FLS}, we have a more precise inclusion
\begin{equation}
\sigma_d(H_q)\subset B(0,r), \qquad r:=\min(\rho^{-1}, \,\|q\|^2_1).
\end{equation}

To study the distribution of eigenvalues of $H$ for potentials from the class $Q_a$, we apply standard tools from 
complex analysis (the Jensen formula).
Recall that the Jensen sum is given by
\begin{equation}\label{jensum}
J(H)=\sum_{\lambda\in\sigma_d(H)} \im\sqrt{\lambda}.
\end{equation}
Here $\sqrt{\cdot}=\sqp(\cdot)$ is the branch of the square root, which maps $\bc\bsl\br_+$ onto the upper half-plane $\bc_+$.

\begin{theorem}[= Theorem \ref{th:-Jensen-upper-intro}]\label{ltgente}
In addition to the hypothesis of Proposition~\ref{jost}, assume that
\begin{enumerate}
\item $\hat a$ is a continuous, strictly monotonically increasing function, and
 $\hat a(0)=0$, \  $\hat a(\infty)=\infty$,

\item 
$$ \int_1^\infty \frac{dx}{xa(x)}<\infty. $$
\end{enumerate}
Then, for each potential $q\in Q_a$, and each $\delta\in(0,1)$, the following bound for the Jensen sum holds
\begin{equation}\label{ltgen}
J(H_q)\le y\log\frac{1+\delta}{(1-\delta)^2} + \frac4{\pi}\,\|q\|_a\int_{\frac1{y}}^\infty \frac{dx}{xa(x)},
\end{equation}
where $y=y(\delta,a,\|q\|_a)>0$ is uniquely determined by
\begin{equation}\label{testpo}
\hat a\left(\frac1{y}\right)\,\|q\|_a=\log (1+\delta).
\end{equation}
\end{theorem}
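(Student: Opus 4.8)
The plan is to estimate the Jensen sum $J(H_q)=\sum_{\z\in Z(e_+)}\im\z$ by applying Jensen's formula in the half-plane $\bc_+$, with pole at a suitable point $iy$ on the imaginary axis, to the Jost function $e_+$, bounding $e_+$ on the boundary $\br$ by means of \eqref{jostfunc}. Throughout one may assume $\|q\|_a>0$, the case $q=0$ being trivial.

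\emph{Choosing the test point and locating the zeros.} Since $\hat a$ is continuous and strictly increasing with $\hat a(0)=0$ and $\hat a(\infty)=\infty$, the map $y\mapsto\hat a(1/y)$ decreases continuously from $\infty$ to $0$ on $(0,\infty)$, so \eqref{testpo} determines $y>0$ uniquely. By \eqref{jostfunc}, $|e_+(iy)-1|\le\exp\bigl(\hat a(1/y)\|q\|_a\bigr)-1=\d$, hence $1-\d\le|e_+(iy)|\le1+\d$; in particular $iy\notin Z(e_+)$. By the spectral enclosure corollary above, every $\l\in\s_d(H_q)$ satisfies $|\l|\le\r^{-1}$, i.e.\ every $\z\in Z(e_+)\subset\bc_+$ satisfies $|\z|\le\r^{-1/2}$; since $\log(1+\d)<\log2$ and $\hat a$ is strictly increasing, $\r^{-1/2}<y$. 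Thus all zeros of $e_+$ lie in the open half-disk $\{\im z>0,\ |z|<y\}$.

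\emph{Jensen's formula.} Let $g_{\bc_+}(\cdot,iy)$ be the Green's function and $\omega^{iy}_{\bc_+}$ the harmonic measure of $\bc_+$ at $iy$, so $d\omega^{iy}_{\bc_+}(t)=\tfrac{y}{\pi(y^2+t^2)}\,dt$ on $\br$. I would apply Jensen's (Green's) formula on the half-disks $D_R:=\{z\in\bc_+:|z|<R\}$, $R>\r^{-1/2}$, with pole $iy$, discard the negative part of the boundary integrand, and let $R\to\infty$. On the arc $\{|z|=R\}\subset\partial D_R$, \eqref{jostfunc} gives $\log^+|e_+|\le\hat a(1/R)\|q\|_a\to0$; on the shared boundary $[-R,R]\subset\br$ one has $\log^+|e_+(t)|\le\hat a(1/|t|)\|q\|_a$ together with the harmonic-measure monotonicity $\omega^{iy}_{D_R}\le\omega^{iy}_{\bc_+}$ on $[-R,R]$; and $g_{D_R}(\z,iy)\nearrow g_{\bc_+}(\z,iy)$. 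By monotone convergence this yields
\begin{equation*}
  \sum_{\z\in Z(e_+)}g_{\bc_+}(\z,iy)\ \le\ \frac{y\,\|q\|_a}{\pi}\int_{-\infty}^\infty\frac{\hat a(1/|t|)}{y^2+t^2}\,dt\ -\ \log|e_+(iy)|,
\end{equation*}
and the integral on the right is finite \emph{precisely because of hypothesis~(2)} (substitute $s=1/|t|$ and recall $\hat a(s)=s/a(s)$): this is the role of that assumption, placing $e_+|_{\br}$ in the relevant Nevanlinna class. I expect this to be the main obstacle --- justifying the formula rigorously on the unbounded domain, and accommodating the (harmless) singularity of $e_+$ at the origin and any real zeros (spectral singularities) on $[-R,R]$; the remainder is elementary.

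\emph{Conclusion.} For every zero $\z$ we have $|\z|<y$, hence
\begin{equation*}
  g_{\bc_+}(\z,iy)=\log\Bigl|\tfrac{\z+iy}{\z-iy}\Bigr|\ \ge\ \frac{2y\,\im\z}{|\z|^2+y^2}\ \ge\ \frac{\im\z}{y}\,,
\end{equation*}
so $J(H_q)=\sum_{\z}\im\z$ is at most $y$ times the right-hand side above. I would bound $-\log|e_+(iy)|\le-\log(1-\d)$ and, for the integral, substitute $s=1/|t|$, write $\hat a(s)=s/a(s)$, and split at $s=1/y$: on $(0,1/y)$ use that $\hat a$ is increasing together with \eqref{testpo} (so $\int_0^{1/y}\hat a(s)\,ds\le\hat a(1/y)/y=\log(1+\d)/(y\|q\|_a)$), and on $(1/y,\infty)$ use $1+y^2s^2\ge y^2s^2$ (so $\int_{1/y}^\infty\frac{\hat a(s)}{1+y^2s^2}\,ds\le y^{-2}\int_{1/y}^\infty\frac{dx}{x\,a(x)}$). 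Collecting terms and using $2/\pi<1<4/\pi$ together with $-\log(1-\d)\le-2\log(1-\d)$ gives
\begin{equation*}
  J(H_q)\ \le\ y\bigl[\log(1+\d)-2\log(1-\d)\bigr]+\frac4\pi\,\|q\|_a\int_{1/y}^\infty\frac{dx}{x\,a(x)}\ =\ y\log\frac{1+\d}{(1-\d)^2}+\frac4\pi\,\|q\|_a\int_{1/y}^\infty\frac{dx}{x\,a(x)},
\end{equation*}
which is \eqref{ltgen}. If $\s_d(H_q)=\varnothing$ the bound holds trivially since its right-hand side is nonnegative.
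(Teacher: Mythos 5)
Your argument is correct and reaches exactly \eqref{ltgen}, but it runs along a genuinely different route from the paper. You work directly in the half-plane: Poisson--Jensen at the pole $iy$, with the Green's function $\log\bigl|\tfrac{\z+iy}{\z-iy}\bigr|$, the Poisson kernel $\tfrac{y}{\pi(y^2+t^2)}$ on $\br$, and the boundary majorant $\log^+|e_+(t)|\le \hat a(1/|t|)\,\|q\|_a$ coming from \eqref{jostfunc}; the zero-localisation $|\z|<y$ (which you get from the spectral-enclosure corollary, and which the paper gets directly from $\hat a(1/|\z|)\|q\|_a\ge\log 2>\log(1+\delta)$) then converts the Green's-function sum into the Jensen sum, and your split of the boundary integral at $s=1/y$, using monotonicity of $\hat a$ on $(0,1/y)$ and $1+y^2s^2\ge y^2s^2$ beyond, reproduces the two terms of \eqref{ltgen} with the same constants. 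The paper instead transplants the normalised function $g(z)=e_+(yz)/e_+(iy)$ to the unit disk via \eqref{eq:var-w} and bounds $\log|f(re^{i\theta})|$ for \emph{every} $r\in(0,1)$ by maximising $|\tfrac{1-re^{i\theta}}{1+re^{i\theta}}|$ over $r$, so that only the classical Jensen formula on the disk with interior estimates is needed --- no boundary values of $e_+$ on $\br$, no unbounded domain, no origin. That is precisely the point where your write-up is soft: you defer the rigorous justification of the half-plane formula (exhaustion by half-disks, the singularity of $e_+$ at $z=0$, possible real zeros). The gap is fillable, and it is worth saying why: the bound \eqref{jostfunc} holds on all of $\bc_+^0$, not merely on $\br$, so you may run your exhaustion on half-disks raised to height $\ep>0$ (where $e_+$ is analytic up to the flat boundary), bound $\log^+|e_+(t+i\ep)|\le\hat a(1/|t|)\|q\|_a$, and pass to the limit $\ep\to0$, $R\to\infty$ by dominated/monotone convergence; the dominating function has finite Poisson integral at $iy$ exactly because of hypothesis (2) (your substitution $s=1/|t|$), real zeros only make the discarded negative part more negative, and the origin causes no harm since $\hat a(1/|t|)$ is locally integrable there. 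Equivalently, this shows $g$ has a harmonic majorant on $\bc_+$, so the Blaschke/Poisson--Jensen machinery applies. With that step written out, your proof is complete; as it stands, the paper's conformal-map device is the cleaner way to avoid the issue altogether.
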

\begin{proof}
The argument is similar to that in Theorem \ref{quanbou}. It follows from \eqref{jostfunc} and \eqref{testpo} that
\begin{equation*}
|e_+(iy)-1|\le 1+\delta-1=\delta, \qquad |e_+(iy)|\ge 1-\delta,
\end{equation*}
so the normalized function
$$ g(z):=\frac{e_+(y z)}{e_+(iy)}\,, \qquad g(i)=1, $$
satisfies
\begin{equation*}
\log|g(z)|\le \log\frac1{1-\delta}+\hat a\left(\frac1{y|z|}\right)\|q\|_a, \qquad z\in\bc_+.
\end{equation*}

Introduce a new variable $w \in \bd$, related to $z \in \bc_+$ by \eqref{eq:var-w}.
For $f(w):=g(z(w))$ one has, as above, $f(0)=1$ and
\begin{equation*}
\log |f(w)|\leq\log \frac1{1-\delta}+\hat a\left(\frac1{y}\left|\frac{1-w}{1+w}\right|\right)\|q\|_a, \quad w\in\bd.
\end{equation*}
For $w=re^{i\theta}$, $|\theta|\le\pi$, it is easy to calculate
\begin{equation*} 
\max_{0\le r\le 1} \left|\frac{1-re^{i\theta}}{1+re^{i\theta}}\right|=
\begin{cases} 1,\ & |\theta|\le \frac{\pi}2, \\
      |\tan\frac{\theta}2\Bigr|, \ & \frac{\pi}2<|\theta|< \pi,
\end{cases} 
\end{equation*}      
so
\begin{equation*}
\log|f(w)|\le
\begin{cases} \log\frac{1+\delta}{1-\delta},\ & |\theta|\le \frac{\pi}2, \\
\log\frac{1}{1-\delta}+\hat a\left(\frac1{y}\Bigl|\tan\frac{\theta}2\Bigr|\right)\|q\|_a, \ & \frac{\pi}2<|\theta|< \pi.
\end{cases}
\end{equation*}

In view of assumption $(2)$, the Jensen formula provides
\begin{equation*}
\begin{split}
\sum_{\eta\in Z(f)} (1-|\eta|) &\le \sum_{\eta\in Z(f)} \log\frac1{|\eta|}  \\ 
&\le \frac12\log\frac{1+\delta}{(1-\delta)^2} +\frac{\|q\|_a}{\pi}\int_{\pi/2}^\pi \hat a\left(\frac1{y}\Bigl(\tan\frac{\theta}2\Bigr)\right)\,d\theta \\
&=\frac12\log\frac{1+\delta}{(1-\delta)^2}+\frac{2\|q\|_a}{\pi}\int_1^\infty\frac{\hat a(y^{-1}t)}{1+t^2}dt, 
\end{split}
\end{equation*}
and hence
\begin{equation*}
\begin{split}
\sum_{\eta\in Z(f)} (1-|\eta|) &\le \frac12\log\frac{1+\delta}{(1-\delta)^2}+\frac{2\|q\|_a}{\pi}\,\int_1^\infty\frac{\hat a(y^{-1}t)}{t^2}dt \\
&\le \frac12\log\frac{1+\delta}{(1-\delta)^2}+\frac{2\|q\|_a}{\pi y}\,\int_{\frac1{y}}^\infty \frac{dx}{xa(x)}=:B.
\end{split}
\end{equation*}

Going back to the function $g$ and the upper half-plane and using \eqref{1eq:elem-ineq-z-w}, we come to
$$ \sum_{\xi\in Z(g)}\frac{\im \xi}{1+|\xi|^2}\le B. $$
The relation between $Z(g)$ and $Z(e_+)$ is straightforward
$$ \xi\in Z(g) \ \Leftrightarrow \ \zeta=y\xi\in Z(e_+), $$
and, hence,
\begin{equation}\label{bla1} 
\sum_{\zeta\in Z(e_+)}\frac{\im \zeta}{1+\Bigl|\frac{\zeta}{y}\Bigr|^2}\le By. 
\end{equation}

As it follows from \eqref{jostfunc},
$$ \hat a\left(\frac1{|z|}\right)\|q\|_a<\log 2 \ \Rightarrow \ e_+(z)\not=0. $$
Therefore,
$$ \hat a\left(\frac1{|\zeta|}\right)\|q\|_a\ge\log 2, \qquad \zeta\in Z(e_+), $$
and so (see the choice of $y$ \eqref{testpo}), by monotonicity of $\hat a$,
$$ \hat a\left(\frac1{|\zeta|}\right)\|q\|_a>\hat a\left(\frac1{y}\right)\,\|q\|_a \ \Rightarrow \ \Bigl|\frac{\zeta}{y}\Bigr|<1. $$

We conclude from \eqref{bla1}, that
\begin{equation*}
\sum_{\zeta\in Z(e_+)} \im{\zeta}\le 2By,
\end{equation*}
and \eqref{ltgen} follows. The proof is complete.
\end{proof}

As a first application of the above result, we study Schr\"odinger operators $H_q$ with potentials $q$ satisfying $\|(1 + x^p)q \|_1 < \infty$ for some $p \in (0,1)$.
Taking $a(x) := x^p$ and any fixed $\delta \in (0,1)$ (e.g., $\delta = 1/2$) in Theorem \ref{ltgente} easily yields the inequality
\begin{equation*}
J(H_q) \leq C(p) \lp \int_0^\infty x^p |q(x)| dx \rp^{\frac{1}{1 - p}}, \qquad p \in (0,1).  
\end{equation*}
The following corollary of Theorem \ref{ltgente} offers a refinement of this bound.
\begin{corollary}
  \label{col:poly-bound}
  Let $p \in (0,1)$ and $a(x) = 1+ x^p$. Then for each potential $q \in Q_a$, the following inequality holds
  \begin{equation}\label{eq:poly-bound}
    J(H_q) \leq \tfrac{4}{\pi}\, \|q\|_a \log \lp 1 + \|q\|_a \rp  + \tfrac{9}{p}\,\|q\|_a + 2.
  \end{equation}
\end{corollary}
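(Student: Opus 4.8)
The plan is to specialise Theorem~\ref{ltgente} to the weight $a(x)=1+x^p$ with a fixed choice of $\d$, say $\d=\tfrac12$, and then reduce the general bound \eqref{ltgen} to \eqref{eq:poly-bound} by elementary estimates. First I would check the hypotheses of Theorem~\ref{ltgente}: for $a(x)=1+x^p$ one has $\hat a(x)=x/(1+x^p)$, with derivative $\bigl(1+(1-p)x^p\bigr)/(1+x^p)^2>0$, so $\hat a$ is continuous, strictly increasing, $\hat a(0)=0$, and $\hat a(\infty)=\infty$ since $p<1$; also $a$ is positive, continuous, increasing and locally integrable, and assumption~(2) holds because $\int_1^\infty\frac{dx}{x(1+x^p)}\le\int_1^\infty x^{-1-p}\,dx=\tfrac1p$. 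The case $q\equiv0$ is trivial, so assume $\|q\|_a>0$.

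With $\d=\tfrac12$ one has $\log\frac{1+\d}{(1-\d)^2}=\log6$, and $y$ is determined by $\hat a(1/y)\,\|q\|_a=\log\tfrac32$. The key observation is that $\hat a(x)=x/(1+x^p)\le x$, so this relation forces $\log\tfrac32\le\|q\|_a/y$, hence
$$ y\le\frac{\|q\|_a}{\log(3/2)},\qquad \frac1y\ge\frac{\log(3/2)}{\|q\|_a}=:c. $$
Thus the first term in \eqref{ltgen} is at most $\tfrac{\log6}{\log(3/2)}\,\|q\|_a$. For the integral term, $1/y\ge c$ together with nonnegativity of the integrand gives $\int_{1/y}^\infty\frac{dx}{x(1+x^p)}\le\int_c^\infty\frac{dx}{x(1+x^p)}$; splitting the latter at $\min(c,1)$ and at $1$ and bounding $\tfrac1{x(1+x^p)}$ by $\tfrac1x$ on $(c,1)$ and by $x^{-1-p}$ on $(1,\infty)$ gives $\int_c^\infty\frac{dx}{x(1+x^p)}\le\max\{0,\log(1/c)\}+\tfrac1p\le\log(1+\|q\|_a)+\log\tfrac1{\log(3/2)}+\tfrac1p$, the last step using $1/c=\|q\|_a/\log\tfrac32$ and $\log\tfrac32<1$.

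Substituting these two bounds into \eqref{ltgen} yields
$$ J(H_q)\le\frac4\pi\,\|q\|_a\log(1+\|q\|_a)+\Bigl(\tfrac{\log6}{\log(3/2)}+\tfrac4\pi\log\tfrac1{\log(3/2)}\Bigr)\|q\|_a+\frac{4}{\pi p}\,\|q\|_a. $$
The bracketed constant is below $6$ and $\tfrac4\pi<2$, so for $p\in(0,1)$ the last two summands are together at most $\tfrac8p\|q\|_a\le\tfrac9p\|q\|_a$, which gives \eqref{eq:poly-bound} with the additive $2$ to spare. I expect the only genuine effort to be this constant-chasing and the minor care needed in the integral estimate when $c>1$ (equivalently, when $\|q\|_a$ is small), which forces the $\max$; there is no conceptual obstacle beyond Theorem~\ref{ltgente} itself.
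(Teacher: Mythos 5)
Your proof is correct and follows essentially the same route as the paper: specialise Theorem \ref{ltgente} to $a(x)=1+x^p$, verify its hypotheses, and bound the two terms of \eqref{ltgen} by elementary constant-chasing. The only difference is bookkeeping — the paper takes a $\|q\|_a$-dependent $\delta=\exp\bigl(\min(\tfrac12\|q\|_a,\log\tfrac32)\bigr)-1$, which forces $y\ge 1$ and produces the additive $+2$, whereas your fixed $\delta=\tfrac12$ together with $\hat a(x)\le x$ and the $\max\{0,\log(1/c)\}$ device handles the small-$\|q\|_a$ regime inside the integral term and in fact yields the stated bound with the $+2$ to spare.
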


\begin{proof}
  Put
  \begin{equation*}
    \delta: = \exp\Bigl(\min \lp \tfrac{1}{2} \|q\|_a, \kappa\rp\Bigr) - 1 \in \bigl(0,\tfrac{1}{2}\bigr], \qquad \kappa=\log\tfrac{3}{2}.
  \end{equation*}
  Then, by \eqref{testpo},
  \begin{equation*}
    A_0 := \frac{\log(1+\delta)}{\|q\|_a}=\hat a\left(\frac1{y}\right) \leq \frac{1}{2} \quad \text{and} 
    \quad \log  \frac{1+\delta}{(1 - \delta)^2} \leq \log 6. 
  \end{equation*}
  
  Since $\hat{a}$ is monotonically increasing, with $\hat{a}(1) = \tfrac{1}{2}$, we must have $y \geq 1$.
  In particular, this implies that
  \begin{equation*}
    \frac{y^{-1}}{1 + y^{-1}} \geq \frac{y^{-1}}{1 + y^{-p}}= \hat{a}(y^{-1}) = A_0, \qquad \frac1{y} \geq \frac{A_0}{1 - A_0}\,,
  \end{equation*}
  and so
  \begin{equation}\label{valuey}
  1\le y\le \frac{1-A_0}{A_0}\le\frac1{A_0}\,.
  \end{equation}
  
  If $\|q\|_a \geq 2 \kappa$, then $\delta = \tfrac{1}{2}$, so $y \leq 3 \|q\|_a$. On the other hand, if $\|q\|_a < 2 \kappa$, 
  then $A_0 = \frac{1}{2}$, so $y=1$ ($\hat a$ is strictly monotonically increasing).  We conclude that
  \begin{equation}\label{eq:y-upper-poly}
    y \leq 3 \|q \|_a + 1. 
  \end{equation}

  The right hand side of (\ref{ltgen}) is the sum of two terms.
  We bound the first one as
  \begin{equation*}
    A_1 := y \log \frac{1+\delta}{(1 - \delta)^2} \leq \log 6 \lp 3 \|q\|_a + 1\rp < 6 \|q\|_a + 2.
  \end{equation*}
  The second (integral) term reads
  \begin{equation*}
    A_2 := \frac{4}{\pi}\, \|q\|_a \int_{1/y}^\infty \frac{d x}{x (1 + x^p)}.
  \end{equation*}
  The integral may be computed, and bounded above, as
  \begin{align*}
    \int_{1/y}^\infty \frac{dx}{x(1 + x^p)} &  = \frac{1}{p}\log \lp 1 + \frac{1}{y^p} \rp + \log y \\
                        & \leq \frac{1}{p y^p}+\log y.  
  \end{align*}
  Using the upper bound (\ref{eq:y-upper-poly}) and the lower bound \eqref{valuey} for $y$, we obtain
  \begin{align*}
    A_2 & \leq \frac{4}{\pi}\, \|q\|_a \lb \log (1 + \|q\|_a) + \log 3 + \frac{1}{p} \rb \\
    & \leq\frac{4}{\pi}\, \|q\|_a \log (1 + \|q\|_a) + \frac{3}{p}\, \|q\|_a.
  \end{align*}
  The bound (\ref{eq:poly-bound}) follows by combining the bounds for $A_1$ and $A_2$.
\end{proof}

\begin{remark}\label{rem:saf}
  In \cite{Safronov}, Safronov also studies Schr\"odinger operators $H_q$ on $\br_+$ with potentials $q$ satisfying $\|(1+x^p) q\|_1<\infty$ for some $p \in (0,1)$,
  and obtains the estimate
  \begin{equation}
    \label{eq:safronov}
    J(H_q) \leq C(p) \lp \int_0^\infty x^p |q(x)| dx \lp \int_0^\infty |q(x)| dx \rp^p  + \int_0^\infty |q(x)| dx  \rp.
  \end{equation}
  Consider the following Schr\"odinger-Dirichlet operators on $\br_+$,
  \begin{equation*}
    H_h = - \frac{d^2}{d x^2} + q(x h),  \qquad h > 0, 
  \end{equation*}
  where $q \in L^1(\br_+)$ is fixed. A rescaling shows that $h \to 0$ is equivalent to a semiclassical limit.
  It can be seen that Corollary \ref{col:poly-bound} gives
  \begin{equation*}
    J(H_h) = O(h^{-(1+p)} \log(\tfrac{1}{h})) \quad \text{as} \quad h \to 0,
  \end{equation*}
  while the estimate (\ref{eq:safronov}) gives
  \begin{equation*}
    J(H_h) = O(h^{-(1 + 2p)}) \quad \text{as} \quad h \to 0,
  \end{equation*}
  hence our result offers an improved asymptotic estimate for  $H_h$.
  
\end{remark}

The next example is more delicate. It presents an integrable potential $q$ that is not covered by Theorem \ref{ltgente}.
More precisely, $q\notin Q_a$ for any weight function $a$ satisfying the assumptions of Theorem \ref{ltgente}.

\begin{example}
  \label{ex:log-alph}
Take $\a>1$ and put
\begin{equation}\label{examp}
    q(x) := \begin{cases} \frac{i}{x\log^\a x},\ & x\ge e, \\
      0, \ & 0<x<e,
    \end{cases}
  \end{equation}
Then, $q \in L^1(\br_+)$. We distinguish two cases.

1. Assume that $\a>2$. Choose $\b$ from $1<\b<\a-1$ and denote
\begin{equation*}
    a(x) := \begin{cases} \log^\b x, \  & x\ge e^\b, \\
      \b^\b, \ & 0<x<e^\b,
    \end{cases}
  \end{equation*}
so $a$ is a positive, monotonically increasing and continuous function on $\br_+$. Then,
\begin{equation*}
   \hat a(x) = \begin{cases} \frac{x}{\log^\b x}, \  & x\ge e^\b, \\
      \b^{-\b}x, \ & 0<x<e^\b.
    \end{cases}
  \end{equation*}
Since $\b>1$, both assumptions of Theorem \ref{ltgente} are met. Clearly, $\|q\|_a<\infty$, so the Jensen sum $J(H_q)$ is finite for this potential.

2. Let now $1<\a\le 2$. We claim that there is no such weight function $a$.

Assume on the contrary, that there are $a$ and $\hat a$,
which satisfy the assumptions of Theorem \ref{ltgente}, and $\|q\|_a<\infty$. Then, for $t\ge e$,
\begin{equation*}
\infty > \int_t^\infty \frac{a(x)}{x\log^\a x}\,dx\ge a(t)\int_t^\infty \frac{dx}{x\log^\a x}=\frac1{\a-1}\,\frac{a(t)}{(\log t)^{\a-1}}\,,
\end{equation*}
or
$$ a(t)\le C_1\,(\log t)^{\a-1},  \qquad t\ge e. $$
But $\a-1\le 1$, and so
$$ \int_1^\infty \frac{dt}{t a (t)} = \infty. $$
A contradiction completes the proof.
\end{example}

Part 2 of the above example by no means claims that $J(H_q)=\infty$ for those potentials.

As a final consequence of Theorem \ref{ltgente}, we study the Jensen sums for Schr\"odinger operators with compactly supported potentials.

\begin{corollary}\label{compsup}
 For any potential $q\in L^1(\br_+)$ with ${\rm supp}(q)\subset [0,R]$, $R>1$, the following inequality holds 
\begin{equation}\label{comsupbou}
J(H_q)\le 7\left[\frac1{R}+\|q\|_1\Bigl(1+\log(1+\|q\|_1)+\log R\Bigr)\right].
\end{equation}
\end{corollary}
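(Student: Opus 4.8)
The idea is to apply Theorem \ref{ltgente} with a weight function $a$ tailored to compactly supported potentials, namely one that is constant (equal to $1$) on $[0,R]$ and then grows fast enough beyond $R$ that hypothesis $(2)$ of Theorem \ref{ltgente} is met, while keeping $\|q\|_a = \|q\|_1$ because $q$ is supported in $[0,R]$. A natural choice is
\begin{equation*}
a(x) := \begin{cases} 1, & 0 \le x \le R, \\ 1 + \log(x/R), & x > R, \end{cases}
\end{equation*}
which is positive, continuous and monotonically increasing, with $\hat a(x) = x/a(x)$ continuous, strictly increasing, $\hat a(0)=0$, $\hat a(\infty)=\infty$; and $\int_1^\infty \frac{dx}{xa(x)} = \int_R^\infty \frac{dx}{x(1+\log(x/R))} = \infty$ — wait, that integral diverges, so I would instead take $a(x) = 1 + \log^2(x/R)$ on $x > R$ (or $1 + (\log(x/R))^{1+\epsilon_0}$ for a fixed small $\epsilon_0>0$), for which $\int_R^\infty \frac{dx}{x(1+\log^2(x/R))} = \int_0^\infty \frac{ds}{1+s^2} = \frac{\pi}{2} < \infty$. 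With this $a$ we have $\|q\|_a = \|q\|_1 =: \kappa_0$ since $\mathrm{supp}(q)\subset[0,R]$.

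Next I would choose $\delta \in (0,1)$ and analyse the defining equation \eqref{testpo}, $\hat a(1/y)\|q\|_1 = \log(1+\delta)$. The point is to pick $\delta$ so that $y := y(\delta,a,\|q\|_1)$ turns out to satisfy $y^{-1} \le R$, equivalently $y \ge 1/R$; on that range $a \equiv 1$ and $\hat a(x) = x$, so \eqref{testpo} simplifies to $\|q\|_1/y = \log(1+\delta)$, i.e. $y = \|q\|_1/\log(1+\delta)$. Taking, say, $\delta$ with $\log(1+\delta) = \min(\|q\|_1 R,\ \log\tfrac32)$ — ensuring $\delta \in (0,1/2]$ and $y \ge 1/R$ — gives $y \le \max(1/R,\ \|q\|_1/\log\tfrac32) \le 1/R + 3\|q\|_1$ (using $1/\log\tfrac32 < 3$). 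The first term of \eqref{ltgen} is then $y\log\frac{1+\delta}{(1-\delta)^2} \le (\log 6)(1/R + 3\|q\|_1)$. For the second term, split $\int_{1/y}^\infty \frac{dx}{xa(x)} = \int_{1/y}^R \frac{dx}{x} + \int_R^\infty \frac{dx}{x(1+\log^2(x/R))} = \log(Ry) + \frac{\pi}{2}$, and bound $\log(Ry) \le \log(R(1/R + 3\|q\|_1)) = \log(1 + 3R\|q\|_1) \le \log 3 + \log(1+R\|q\|_1) \le \log 3 + \log(1+\|q\|_1) + \log R$ (the last step using $R>1$ and submultiplicativity of $x\mapsto 1+x$ up to the $\log 3$). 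Assembling, $\frac{4}{\pi}\|q\|_1(\log(Ry) + \frac\pi2) = \frac{4}{\pi}\|q\|_1\log(Ry) + 2\|q\|_1$, and collecting all constants against the claimed coefficient $7$ finishes the estimate \eqref{comsupbou}.

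The routine-but-delicate part is the bookkeeping of numerical constants so that everything fits under the single factor $7$; this requires being slightly careful with the cases $\|q\|_1 R \ge \log\tfrac32$ versus $\|q\|_1 R < \log\tfrac32$ when bounding $y$ and $\delta$, much as in the proof of Corollary \ref{col:poly-bound}. I expect the only genuine obstacle is verifying that the chosen $\delta$ really does force $y^{-1}\le R$ in all cases — i.e. that $\hat a$ being piecewise-defined does not spoil the simplification of \eqref{testpo}; since $\hat a$ is strictly increasing and $\hat a(R) = R$, the condition $\hat a(1/y)\|q\|_1 = \log(1+\delta) \le \|q\|_1 R = \hat a(R)\|q\|_1$ gives $1/y \le R$ immediately, so this causes no trouble. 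All other steps are elementary integral evaluations and inequalities.
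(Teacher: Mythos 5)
This is essentially the paper's proof: the paper likewise applies Theorem \ref{ltgente} with a weight equal to $1$ on $[0,R]$ and growing like a squared logarithm beyond $R$ (it takes $a(x)=(\log x/\log R)^2$ for $x\ge R$), the same choice $\log(1+\delta)=\min(\|q\|_1R,\log\tfrac32)$, the same identification $y=\|q\|_1/\log(1+\delta)$ via $1/y\le R$, and the same splitting of the integral at $R$. Two small points to close it out: check that your $\hat a(x)=x/(1+\log^2(x/R))$ is indeed increasing on $(R,\infty)$ (its derivative is $(1-\log(x/R))^2/(1+\log^2(x/R))^2\ge 0$, vanishing only at $x=eR$, so yes), and note that your crude bound $y\le \frac1R+3\|q\|_1$ makes the bare-$\|q\|_1$ coefficient about $3\log 6+\tfrac4\pi\log 3+2\approx 8.8>7$, so the deferred bookkeeping must absorb the excess into the $1/R$, $\log R$ and $\log(1+\|q\|_1)$ terms --- a short case check (small versus moderate $\|q\|_1$, $R$ near $1$ versus $R\ge 1.4$, in the spirit of the paper's split $\|q\|_1R\gtrless\log\tfrac32$) confirms this works, so the proposal is sound.
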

\begin{proof}
We choose the weight functions
\begin{equation*}
a(x)=\begin{cases} 1, \ & 0<x\le R, \\
\left(\frac{\log x}{\log R}\right)^2, \ & x\ge R,
\end{cases} \qquad 
\hat a(x)=\begin{cases} x, \ & 0<x\le R, \\
x\left(\frac{\log R}{\log x}\right)^2, \ & x\ge R.
\end{cases}
\end{equation*}
Since ${\rm supp}(q)\subset [0,R]$, we have $\|q\|_a=\|q\|_1$.

Put
\begin{equation*}
\delta:=\exp\Bigl(\min\bigl(\|q\|_1R, \ \kappa\bigr)\Bigr)-1\in \Bigl(0,\frac12\Bigr], \quad \kappa=\log\frac32.
\end{equation*}
Clearly,
$$ \log(1+\delta)=\min\bigl(\|q\|_1R, \ \kappa\bigr)\le \|q\|_1R, \quad \frac{\log(1+\delta)}{\|q\|_1}\le R, $$
and so the quantity $y$ defined in \eqref{testpo} is given by
\begin{equation*}
y=\frac{\|q\|_1}{\log(1+\delta)}\,.
\end{equation*}

The right hand side of \eqref{ltgen} is the sum of two terms, $A=A_1+A_2$. The first one is
\begin{equation*}
\begin{split}
A_1 &:=y\log\frac{1+\delta}{(1-\delta)^2}=\|q\|_1+y\log\frac1{(1-\delta)^2}\leq\|q\|_1\left\{1+\frac{\log 4}{\log(1+\delta)}\right\} \\
&=\|q\|_1\left\{1+\frac{\log 4}{\min\bigl(\|q\|_1R, \ \kappa\bigr)}\right\}\,.
\end{split}
\end{equation*}
Hence,
\begin{equation*}
A_1\le
\begin{cases} \|q\|_1\left(1+\frac{\log 4}{\kappa}\right)<5\|q\|_1 , \ & \|q\|_1R\ge\kappa, \\
\|q\|_1+\frac{\log 4}{R}=\frac{\|q\|_1R+\log 4}{R}<\frac{\log 6}{R}, \ & \|q\|_1R<\kappa.
\end{cases}
\end{equation*}

To estimate the second (integral) term $A_2$, note that $y^{-1}\le R$, and so \newline $A_2=A_{21}+A_{22}$ with
\begin{equation*}
\begin{split}
A_{21} &:=\frac4{\pi}\,\|q\|_1\int_{\frac1{y}}^R \frac{dt}{t}=\frac4{\pi}\,\|q\|_1\log\frac{\|q\|_1R}{\log(1+\delta)}\,, \\
A_{22} &:=\frac4{\pi}\,\|q\|_1\log^2 R\int_R^\infty \frac{dt}{t\log^2t}=\frac4{\pi}\,\|q\|_1\log R.
\end{split}
\end{equation*}
Hence,
\begin{equation*}
A_2\le \frac4{\pi}\,\|q\|_1\log R+\frac4{\pi}\,\|q\|_1\log\frac{\|q\|_1R}{\min\bigl(\|q\|_1R, \ \kappa\bigr)}\,,
\end{equation*}
or
\begin{equation*}
A_2\le\begin{cases} \frac4{\pi}\,\|q\|_1\left(\log R+\log(\|q\|_1R)+\log\frac1{\kappa}\right), \ & \|q\|_1R\ge\kappa, \\
\frac4{\pi}\,\|q\|_1\log R, \ & \|q\|_1R<\kappa.
\end{cases}
\end{equation*}

A combination of the above bounds (with appropriate calculation of the constants) leads to \eqref{comsupbou}, as claimed.
\end{proof}
\begin{remark}
   The celebrated Blaschke condition for zeros of analytic functions on the upper half-plane reads (see \cite[Section II.2, (2.3)]{Garnett})
   \begin{equation}
     \sum_{z\in Z(f)} \frac{\im z}{1+|z|^2}<\infty.
   \end{equation}
   It holds, for instance, for functions of bounded type (ratios of bounded analytic functions).
   In view of the spectral enclosure $|z|\le\|q\|_1$, the bound $J(H_q)<\infty$ is equivalent to the Blaschke condition for zeros of the Jost function.
 \end{remark}

\section{Dissipative barrier potentials}\label{sec:db}

As in the introduction (see (\ref{eq:db-defn-intro})), let $L_{\g,R}$ denote a Schr\"odinger-Dirichlet operator on $\br_+$ with the potential
\begin{equation}
  \label{eq:db-pot-defn}
  q_{db} := i \gamma \chi_{[0,R]}, \qquad \g, R >0. 
\end{equation}
We fix $\gamma$ throughout this section and shall be interested in large $R$.
The aim of the section is  to prove the bounds for the Lieb-Thirring and Jensen sums of the eigenvalues of $L_{\g,R}$ for large enough $R$.

\subsection{Eigenvalues of Schr\"odinger operators with dissipative barrier potentials}

The value $z^2\in\s_d(L_{\g,R})$ if the equation 
\begin{equation}\label{eq1}
-y'' + i \gamma \chi_{[0,R]}(x)y =z^2 y
\end{equation}
has a solution $y\in L^2(\br_+)$  with $y(0)=0$. An integration by parts with the normalized eigenfunction gives
\begin{equation}\label{disp}
\begin{split} 
z^2 &=\int_0^\infty |y'(t)|^2dt+i\g\int_0^R |y(t)|^2dt\in\gg_+, \\
\gg_+ &:=\{\z\in\bc: \ \re\z>0, \ 0<\im\z <\g\}.
\end{split} 
\end{equation}

It shall be convenient for us to work with two different branches  $\sqpm$ of the square-root function.
$\sqpm$ have branch-cuts along $\br_\pm$, respectively, and the corresponding argument functions $\arg_\pm$ satisfy
\begin{equation*}
  \label{eq:arg-pm}
  \arg_+(\z) \in [0,2 \pi), \quad \arg_-(\z) \in [-\pi, \pi),\quad \z \in \bc; \quad \sqpm(\z)=\sqrt{|\z|}e^{\frac{i}{2}\arg_{\pm}(\z)}.
\end{equation*}

Since the solutions of the equation \eqref{eq1} are obviously computable, we may characterise the eigenvalues of $L_{\g,R}$ as the zeros 
of an explicit analytic function. Let
\begin{equation*}
  \varphi_R(z)  := \bigl(z  - \sqp(z^2 - i \gamma)\bigr) e^{iR \,\sqp(z^2 - i \gamma)} -  
  \bigl(z  + \sqp(z^2 - i \gamma)\bigr) e^{-iR \,\sqp(z^2 - i \gamma)} . 
\end{equation*}

\begin{lemma}
  \label{lem:varphi-R-eig}
  For any $R > 0$ and any $z \in \bc_+$ with $z^2 \not= i \gamma$,
  \begin{equation*}
    z^2 \in \sigma_d(L_{\g,R}) \quad \iff \quad \varphi_R(z) = 0.
  \end{equation*}
\end{lemma}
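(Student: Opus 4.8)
The plan is to solve the ODE \eqref{eq1} explicitly on the two intervals $[0,R]$ and $(R,\infty)$, match the solution and its derivative at $x=R$, impose the Dirichlet condition $y(0)=0$ and the $L^2$-integrability at infinity, and show that the resulting compatibility condition is exactly $\varphi_R(z)=0$. First I would note that for $z\in\bc_+$ we have $\im z>0$, so $e^{izx}\to 0$ and $e^{-izx}\to\infty$ as $x\to\infty$; hence on $(R,\infty)$, where the equation reads $-y''=z^2y$, the only solutions in $L^2(\br_+)$ are scalar multiples of $e^{izx}$ (we must be careful to observe that $e^{izx}$ is genuinely in $L^2$ near infinity because $\im z>0$, and that $z\ne 0$ since $z\in\bc_+$, so the two exponentials are linearly independent). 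On $[0,R]$ the equation is $-y''=(z^2-i\gamma)y$, whose general solution is a linear combination of $e^{\pm i\kappa x}$ with $\kappa:=\sqp(z^2-i\gamma)$; the hypothesis $z^2\ne i\gamma$ guarantees $\kappa\ne 0$ so these two solutions are independent and the representation is valid.

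Next I would write $y(x)=A e^{i\kappa x}+B e^{-i\kappa x}$ on $[0,R]$ and $y(x)=C e^{izx}$ on $(R,\infty)$. The Dirichlet condition gives $A+B=0$, so up to scaling $y(x)=\sin(\kappa x)$ on $[0,R]$. Continuity of $y$ and $y'$ at $x=R$ (which is the correct matching condition since the potential has only a jump discontinuity there, so $H$-eigenfunctions are $C^1$) yields the two equations $\sin(\kappa R)=C e^{izR}$ and $\kappa\cos(\kappa R)=izC e^{izR}$. Eliminating $C$ gives the characteristic equation $\kappa\cos(\kappa R)=iz\sin(\kappa R)$, i.e. $\kappa\cos(\kappa R)-iz\sin(\kappa R)=0$. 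Then I would rewrite this using $\cos(\kappa R)=\tfrac12(e^{i\kappa R}+e^{-i\kappa R})$ and $\sin(\kappa R)=\tfrac1{2i}(e^{i\kappa R}-e^{-i\kappa R})$, obtaining
\[
\tfrac12\bigl[(\kappa - z)e^{i\kappa R}+(\kappa+z)e^{-i\kappa R}\bigr]=0,
\]
which, after multiplying by $-2$ and recalling $\kappa=\sqp(z^2-i\gamma)$, is precisely $\varphi_R(z)=0$. This establishes the equivalence between the existence of an $L^2$ Dirichlet eigenfunction and $\varphi_R(z)=0$.

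To finish, I would connect ``existence of such an eigenfunction'' with ``$z^2\in\sigma_d(L_{\g,R})$''. For the forward direction this is immediate from the definition of the discrete spectrum as recalled around \eqref{eq1}. For the converse, I would invoke the Jost-function framework recalled in the introduction: $z^2\in\sigma_d(L_{\g,R})$ iff $e_+(z)=0$, and $e_+(z)=0$ is equivalent to the Jost solution $e_+(\cdot,z)$ — which is in $L^2$ near infinity by \eqref{eq:Jost-asymp} and the fact that $q_{db}$ is compactly supported, so $e_+(x,z)=e^{izx}$ for $x\ge R$ exactly — satisfying the Dirichlet condition, hence being an honest $L^2(\br_+)$ eigenfunction. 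Alternatively, one can simply observe that every solution of \eqref{eq1} on $(R,\infty)$ lying in $L^2$ is a multiple of $e^{izx}$, so the argument above is fully reversible: $\varphi_R(z)=0$ produces a nonzero $y\in L^2(\br_+)$ with $y(0)=0$ solving \eqref{eq1}, which is an eigenfunction. The multiplicities match because $\varphi_R$ agrees with $e_+$ up to a nonvanishing analytic factor on $\bc_+\setminus\{z:z^2=i\gamma\}$; I would remark on this but the statement as phrased only asserts the zero-set equivalence. The only mild technical point — and the closest thing to an obstacle — is bookkeeping with the branch $\sqp$: one must check that the two formulas for $y$ on $[0,R]$ genuinely solve $-y''=(z^2-i\gamma)y$ regardless of which square root is chosen (true, since $\kappa^2=z^2-i\gamma$ either way), and that the final expression $\varphi_R$ is well defined and analytic where claimed; since $z^2-i\gamma\notin\br_+$ is not guaranteed, one should note that $\varphi_R$ is in fact even in $\kappa$ up to the stated sign conventions, or simply that any fixed branch gives the same zero set, so no ambiguity arises.
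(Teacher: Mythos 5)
Your proposal is correct and follows essentially the same route as the paper: both solve the piecewise-constant equation explicitly on $[0,R]$ and $(R,\infty)$, match $y$ and $y'$ at $x=R$, and identify the resulting condition with $\varphi_R(z)=0$, relying on the Jost-function framework to equate ``$L^2$ Dirichlet solution exists'' with $z^2\in\sigma_d(L_{\g,R})$. The only cosmetic difference is that the paper normalizes at infinity (computing the Jost solution and showing $\varphi_R(z)=-2\,\sqp(z^2-i\gamma)e^{-iRz}e_+(0,z)$), whereas you normalize at $0$ via $\sin(\kappa x)$ and impose decay at infinity; the matching computation is the same.
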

\begin{proof}
  Let $R > 0$ and $z \in \bc_+$ such that $z^2 \not= i \gamma$.
  Recall that $e_+(\cdot,z)$ denotes the Jost solution.
  Since $e_+(\cdot,z)$ spans the space of solutions of \eqref{eq1} in $L^2(\br_+)$, we have
  \begin{equation*}
    z^2 \in \sigma_d(L_{\g,R})  \quad \iff \quad e_+(0,z) = 0.
  \end{equation*}

It suffices to show that $e_+(0,z) = 0$ if and only if $\varphi_R(z) = 0$.
  Since $z \neq 0$ and $z^2 \neq i \gamma$, $e_+$ must satisfy
  \begin{equation*}
    e_+(x,z) = \begin{cases} c_1(z) e^{i x \, \sqp(z^2 - i \gamma)} + c_2(z) e^{- i x \, \sqp(z^2 - i \gamma) }, & 0 < x < R \\
      e^{i x z }, & x \geq R,
    \end{cases}
  \end{equation*}
  for some $c_j(z) \in \bc$, $j = 1,2$.
  $c_1$ and $c_2$ are determined by imposing the continuity of $e_+(\cdot,z)$ and $\frac{d}{dx} e_+(\cdot , z)$ at the point $R$,
  \begin{align*}
    c_1(z) & = \frac{\sqp(z^2 - i \gamma) + z}{2 \, \sqp(z^2 - i \gamma)} e^{ - i R \, \bigl(\sqp(z^2 - i \gamma) - z\bigr) }, \\
    c_2(z) & = \frac{\sqp(z^2 - i \gamma) - z}{2 \, \sqp(z^2 - i \gamma)} e^{ i R \, \bigl(\sqp(z^2 - i \gamma) + z\bigr) },
  \end{align*}
  and so the expression for the Jost function $e_+(0,z)$ is
  \begin{equation*}
  \begin{split}
  &{} e^{-iRz}\,e_+(0,z) \\
  &=\frac{\bigl(z+\sqp(z^2 - i \gamma)\bigr)e^{-iR\,\sqp(z^2 - i \gamma)} 
  - \bigl(z-\sqp(z^2 - i \gamma)\bigr)e^{iR\,\sqp(z^2 - i \gamma)}}{2\,\sqp(z^2 - i \gamma)} \\
  &=\cos \bigl(R\,\sqp(z^2 - i \gamma)\bigr)-izR\frac{\sin\bigl(R\,\sqp(z^2 - i \gamma\bigr))}{R\,\sqp(z^2 - i \gamma)}\,.    
  \end{split}
  \end{equation*}
  Note that it is clear from this expression that $e_+$ is an entire function.
  
  Finally, $z^2  \neq i \gamma$, so $e_+(0,z) = 0$ if and only if
  \begin{equation*}
    \varphi_R(z) = - 2 \,\sqp(z^2 - i \gamma) e^{- i R z } e_+(0, z) = 0.
  \end{equation*}
  The proof is complete.
\end{proof}

Note that, $\varphi_R(z_0)=0$ for $z_0^2=i\g$, but $z_0^2\notin\s_d(L_{\g,R})$.

\medskip

Our strategy is to derive a countable family of equations, each of which has a unique solution corresponding to exactly one zero of $\varphi_R$.
Introduce a new variable $w$ by
\begin{equation*}
  w := \sqp(z^2 - i \gamma).
\end{equation*}
For $\re z  > 0$, we have $z = \sqm(z^2)$ and so
\begin{equation}\label{eq:z-sqm-w}
  z=\sqm(w^2+i\g).
\end{equation}

Consider the family of equations
\begin{equation}
  \label{eq:fixed-point-family}
  w = G_{j,R}(w) := \frac{-B_j(w) + i A(w)}{2R}\,, \qquad j \in \bn, 
\end{equation}
where
\begin{equation*}
  A(w) := \log \left|\frac{\sqm(w^2 + i \gamma) - w}{\sqm(w^2 + i \gamma) + w}\right|
\end{equation*}
and
\begin{equation*}
  B_j(w) := \arg_- \left(\frac{\sqm(w^2 + i \gamma) - w}{\sqm(w^2 + i \gamma) + w}\right) + 2 \pi j, \qquad j\in\bn. 
\end{equation*}
Clearly,
\begin{equation}\label{boundsb}
  2\pi\Bigl(j-\frac12\Bigr) \le B_j(w)<2\pi\Bigl(j+\frac12\Bigr), \qquad j\in\bn.
\end{equation}
\begin{figure}
        \centering
        \includegraphics[width = \textwidth]{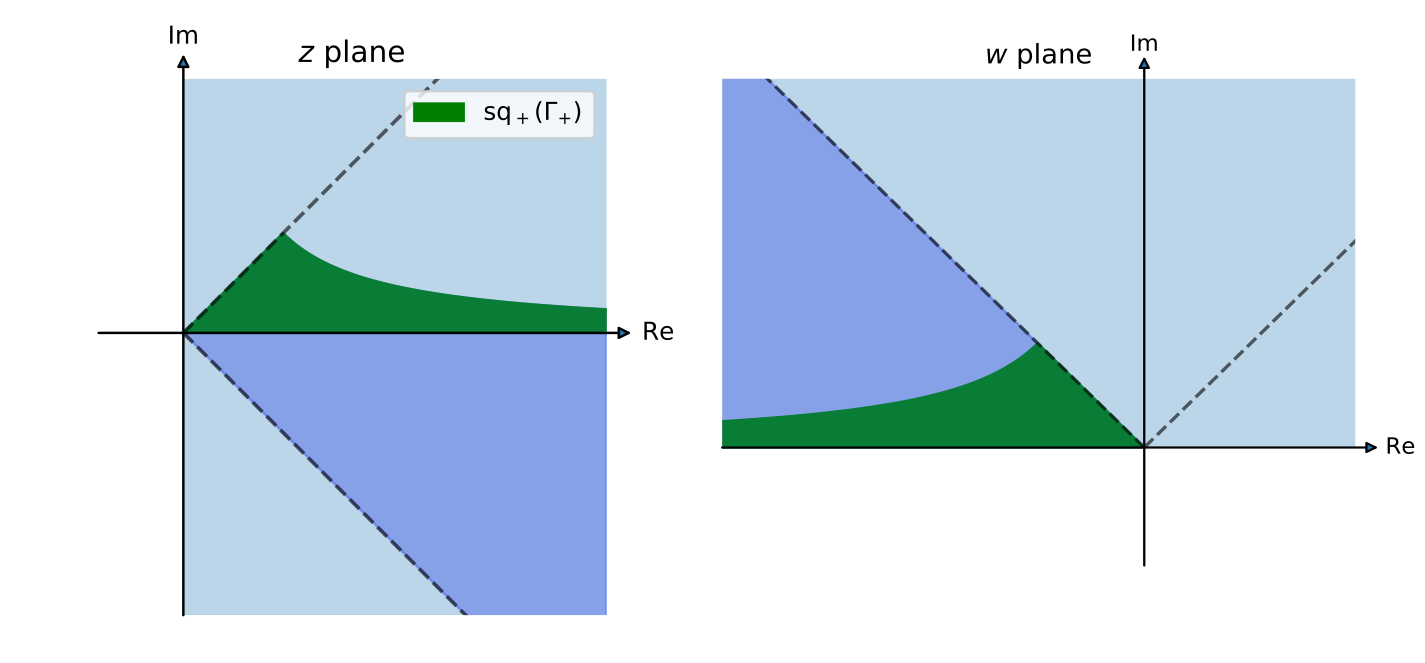}
        \caption{An illustration of the new complex variable
          $w$. Regions of identical colours are mapped to each other. }
\end{figure}

\begin{lemma}
  \label{lem:family-eigval}
  Let $R > 0$. If $w \in \bc_+$ solves  equation $\eqref{eq:fixed-point-family}$, and  $w^2 + i \gamma \in \bc_+$, 
  then $w^2 + i \gamma \in \sigma_d(L_{\g,R})$.
\end{lemma}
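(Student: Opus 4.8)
The plan is to trace the chain of equivalences backwards from the fixed-point equation \eqref{eq:fixed-point-family} to the vanishing of $\varphi_R$, using Lemma \ref{lem:varphi-R-eig}. Suppose $w \in \bc_+$ solves $w = G_{j,R}(w)$ for some $j \in \bn$, and set $\lambda := w^2 + i\gamma$. Assume $\lambda \in \bc_+$; I would first record that $\re z > 0$, where $z := \sqm(\lambda) = \sqm(w^2+i\gamma)$, so that indeed $z^2 = \lambda \ne i\gamma$ (since $w \ne 0$, as $w \in \bc_+$) and $z \in \bc_+$, putting us in the situation covered by Lemma \ref{lem:varphi-R-eig}. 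The relations \eqref{eq:z-sqm-w} and $w = \sqp(z^2 - i\gamma)$ are then mutually consistent by the argument preceding \eqref{eq:z-sqm-w}, so $\varphi_R(z)$ can be rewritten purely in terms of $w$ as
\begin{equation*}
\varphi_R(z) = (z - w) e^{iRw} - (z+w)e^{-iRw}.
\end{equation*}

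Next I would rearrange $\varphi_R(z) = 0$ into $e^{2iRw} = \frac{z+w}{z-w}$, valid since $z \ne w$ (otherwise $w^2 = z^2 - w^2$ forces $w^2 = i\gamma/2$, and one checks $\sqm(w^2+i\gamma) \ne w$ in that case — or more simply, $z = w$ would give $\varphi_R(z) = -2w e^{-iRw} \ne 0$). Taking absolute values and arguments converts this single complex equation into the pair
\begin{equation*}
-2R\,\im w = \log\left|\frac{z+w}{z-w}\right|, \qquad 2R\,\re w \equiv \arg\left(\frac{z+w}{z-w}\right) \pmod{2\pi}.
\end{equation*}
Here I must be careful: $e^{2iRw} = e^{-2R\im w}e^{2iR\re w}$, so the modulus equation reads $-2R\im w = -A(w)$ after noting $\left|\frac{z+w}{z-w}\right| = \left|\frac{z-w}{z+w}\right|^{-1}$, hence $\im w = A(w)/(2R)$ — matching the imaginary part of $G_{j,R}$, since $G_{j,R}(w) = \frac{-B_j(w)}{2R} + i\frac{A(w)}{2R}$. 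The argument equation gives $2R\re w = -\arg_-\!\big(\tfrac{z-w}{z+w}\big) - 2\pi j'$ for some integer $j'$; since $w \in \bc_+$ means $\re w$ has a sign but is otherwise unconstrained, and \eqref{boundsb} shows $B_j$ ranges over all of $(0,\infty)$ as $j$ runs through $\bn$ (for fixed $w$), the real-part equation $\re w = -B_j(w)/(2R)$ is exactly the statement that $2R\re w \equiv \arg_-\!\big(\tfrac{z-w}{z+w}\big)$ with the correct branch index. Thus $w = G_{j,R}(w)$ is equivalent to $\varphi_R(z) = 0$ together with a consistency condition selecting the index $j$.

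The upshot is: if $w$ solves \eqref{eq:fixed-point-family}, then $\varphi_R(z) = 0$, so by Lemma \ref{lem:varphi-R-eig} (whose hypotheses $z \in \bc_+$, $z^2 \ne i\gamma$ we have verified) $z^2 = w^2 + i\gamma \in \sigma_d(L_{\g,R})$, which is the claim. The main obstacle is the bookkeeping of branch cuts: one must confirm that with $z = \sqm(w^2+i\gamma)$ and $w = \sqp(z^2-i\gamma)$ chosen via the $\sqm, \sqp$ conventions, the quantity $\frac{z-w}{z+w}$ has its argument in the range $[-\pi,\pi)$ compatible with $\arg_-$, and that $\re w > 0$ (or the correct sign) so that the integer shift $2\pi j$ with $j \in \bn$ (rather than $j \in \bz$) suffices to capture the solution — this is where the figure and the region-colouring are doing real work. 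I would handle this by first pinning down, from $\lambda = w^2 + i\gamma \in \bc_+$ and $z^2 - i\gamma = w^2 \in$ (the appropriate region), exactly which quadrant each of $z, w, z\pm w$ lies in, and only then reading off the admissible range of $\arg_-\!\big(\tfrac{z-w}{z+w}\big)$ and the sign of $\re w$. Once the geometry is fixed, the algebra is the routine manipulation sketched above.
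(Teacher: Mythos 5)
Your proposal is correct and follows essentially the paper's route: rewriting the fixed-point equation in logarithmic/exponential form (i.e.\ $-2iRw=\log_-\bigl(h(w)\bigr)+2\pi i j$ with $h(w)=\frac{\sqm(w^2+i\g)-w}{\sqm(w^2+i\g)+w}$, then exponentiating) to obtain $\varphi_R(z)=0$ with $z=\sqm(w^2+i\g)$, and then checking $z\in\bc_+$, $z^2\neq i\g$ and invoking Lemma \ref{lem:varphi-R-eig}. One clarification: the branch/index bookkeeping you flag as the ``main obstacle'' is only relevant to the converse (that every zero of $\varphi_R$ is captured by some $j\in\bn$), which the lemma does not assert; for the implication actually needed, exponentiation annihilates the $2\pi i j$ and the choice of $\arg_-$, and the only identification required is $\sqp(z^2-i\g)=\sqp(w^2)=w$, which is immediate from $w\in\bc_+$ since $\arg_+(w^2)=2\arg w\in(0,2\pi)$.
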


\begin{proof}
The equation \eqref{eq:fixed-point-family} can be written as
  \begin{equation}
    \label{eq:fixed-point-eq-logm}
    w = G_{j,R}(w) = \frac{i}{2R} \left(\log_-\left( \frac{\sqm(w^2 + i \gamma) - w}{\sqm(w^2 + i \gamma) + w}\right) + 2 \pi i j\right)
  \end{equation}
  where $\log_-$ denote the branch of the logarithm corresponding to $\arg_-$.
  Rearranging this equation, it holds that
  \begin{equation}
    \label{eq:varphi-w}
    \bigl(\sqm(w^2 + i \gamma) - w\bigr) e^{i R w} - \bigl(\sqm(w^2 + i \gamma) + w\bigr) e^{- i R w} = 0,
  \end{equation}
which is equivalent to $\varphi_R(z) = 0$, where $z$ is defined by (\ref{eq:z-sqm-w}).
  Finally, $w \neq 0$ implies $z^2 \neq i \gamma$, and
  the hypothesis $w^2 + i \gamma \in \bc_+$ ensures that $z \in \bc_+$ so, by Lemma \ref{lem:varphi-R-eig},
  we have $z^2 = w^2 + i \gamma \in \sigma_d(L_{\g,R})$.
\end{proof}

From this point on, we shall restrict our attention to solutions of \eqref{eq:fixed-point-family} in the angle
\begin{equation}\label{sinfty}
\begin{split}
F_\infty &=  \{ w \in \bc : \re w \leq 0\leq \im w, \  |\re w| \geq 2\,\im w \} \\
&=\{re^{i\theta}: \ \pi-\arctan\frac12\le\theta\le\pi, \ r \geq 0\} 
\end{split}
\end{equation}
and its subsets
$$ F_j := \{ w \in F_\infty : B_j(w) \geq 2\,|A(w)|\}, \qquad j \in \bn. $$
Since $B_{j+1}(w)=B_j(w)+2\pi$, the family $\{F_j\}_{j\ge1}$ is nested
\begin{equation*}
F_j\subset F_{j+1}, \qquad \bigcup_{j=1}^\infty F_j=F_\infty.
\end{equation*}
 As $B_j(w)\ge\pi$ for all $w \in F_\infty$, and $A(0) = 0$, the set $F_j$ is nonempty for all $j\in\bn$. 

The next result  establishes existence and uniqueness of solutions in the regions $F_j$ for each equation \eqref{eq:fixed-point-family} and
large enough $R$. Precisely, we assume that
\begin{equation}\label{bigr}
R\geq C_0\Bigl(\gamma^{3/4}+\gamma^{-3/4}\Bigr), \qquad C_0= 600.
\end{equation}

\begin{proposition}
  \label{prop:ex-and-uniq}
  For all $R$ satisfying $\eqref{bigr}$ and all $j \in \bn$, the equation $\eqref{eq:fixed-point-family}$
  has a unique solution in $F_\infty$ which lies in $F_j$. For different equations the solutions are different.
\end{proposition}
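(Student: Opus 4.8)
The plan is to verify the hypotheses of the Banach fixed point theorem for each map $G_{j,R}$ on the closed region $F_j$ (or a suitable closed subset thereof), thereby producing a unique fixed point in $F_j$, and then to rule out fixed points in $F_\infty \setminus F_j$ by hand. First I would record the elementary behaviour of the auxiliary quantities $A(w)$ and $B_j(w)$ near $w = 0$ and as $|w| \to \infty$. Since $\sqm(w^2 + i\gamma) \to \sqm(i\gamma) \neq 0$ as $w \to 0$, the ratio $(\sqm(w^2+i\gamma) - w)/(\sqm(w^2+i\gamma)+w) \to 1$, so $A(w) \to 0$ and $\arg_-$ of the ratio $\to 0$; hence $B_j(0) = 2\pi j$ and $G_{j,R}(0) = i\pi j / R$, which lies on the positive imaginary axis. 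For large $|w|$ in the angle $F_\infty$ one expands $\sqm(w^2 + i\gamma) = w\sqrt{1 + i\gamma/w^2} = w(1 + i\gamma/(2w^2) + O(|w|^{-4}))$, giving $(\sqm(w^2+i\gamma) - w)/(\sqm(w^2+i\gamma)+w) = i\gamma/(4w^2) + O(|w|^{-4})$, so that $A(w) = -2\log|w| + O(1)$ and $B_j(w) = \arg_-(i\gamma/(4w^2)) + 2\pi j + o(1)$ stays comparable to $2\pi j$. These asymptotics already show that $|G_{j,R}(w)|$ grows only logarithmically in $|w|$, which is the mechanism behind the contraction property when $R$ is large.

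Next I would establish the self-map property: $G_{j,R}(F_j) \subseteq F_j$ for $R$ as in \eqref{bigr}. One has to check three things for $w \in F_j$: that $\re G_{j,R}(w) \le 0 \le \im G_{j,R}(w)$, that $|\re G_{j,R}(w)| \ge 2\,\im G_{j,R}(w)$ (so the image lies in $F_\infty$), and that $B_j(G_{j,R}(w)) \ge 2|A(G_{j,R}(w))|$ (so the image lies in $F_j$). From \eqref{eq:fixed-point-family}, $\re G_{j,R}(w) = -B_j(w)/(2R)$ and $\im G_{j,R}(w) = A(w)/(2R)$; since $B_j(w) \ge \pi > 0$ the real part is negative, and on $F_j$ one has $B_j(w) \ge 2|A(w)|$ which forces $A(w) \ge -B_j(w)/2$; combined with a crude upper bound $A(w) \le 0$ valid in the angle (because $|\sqm(w^2+i\gamma) - w| \le |\sqm(w^2+i\gamma)+w|$ when $\re w \le 0$ and $\re \sqm(w^2+i\gamma) \ge 0$) one gets $0 \le \im G_{j,R}(w) $ and $|\re G_{j,R}(w)|/\im G_{j,R}(w) = B_j(w)/|A(w)| \ge 2$. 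For the last condition one uses the logarithmic size bound on the image together with $B_j(\cdot) \ge \pi$ and the explicit lower bound on $R$; this is where the constant $C_0 = 600$ and the power $\gamma^{3/4} + \gamma^{-3/4}$ get calibrated, since one needs $R$ large enough relative to $\gamma$ to absorb the $O(1)$ error terms uniformly over the whole angle, and the worst case occurs for $w$ of moderate modulus where $\sqm(w^2+i\gamma)$ is neither close to $\sqm(i\gamma)$ nor to $w$.

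Then I would prove the contraction estimate: there is $\kappa < 1$ with $|G_{j,R}(w_1) - G_{j,R}(w_2)| \le \kappa |w_1 - w_2|$ for $w_1, w_2 \in F_j$. Writing $\Phi(w) := (\sqm(w^2+i\gamma) - w)/(\sqm(w^2+i\gamma)+w)$, one has $G_{j,R}(w) = \frac{i}{2R}(\log_- \Phi(w) + 2\pi i j)$, so $G_{j,R}'(w) = \frac{i}{2R}\,\Phi'(w)/\Phi(w)$, and it suffices to bound $|\Phi'(w)/\Phi(w)|$ uniformly on $F_j$ and divide by $2R$. A direct computation gives $\Phi'/\Phi = \frac{(\sqm)'(w^2+i\gamma)\cdot 2w - 1}{\sqm(w^2+i\gamma) - w} - \frac{(\sqm)'(w^2+i\gamma)\cdot 2w + 1}{\sqm(w^2+i\gamma) + w}$ with $(\sqm)'(\zeta) = 1/(2\sqm(\zeta))$, and one shows $|\Phi'(w)/\Phi(w)| \le C_1(\gamma^{3/4} + \gamma^{-3/4})$ or some comparable quantity on the angle — the $|w| \to 0$ singularity of $1/\sqm(w^2+i\gamma)$ is absent since $\sqm(i\gamma) \neq 0$, and the $|w| \to \infty$ behaviour is $\Phi'/\Phi \sim -2/w \to 0$, so the supremum is finite and controlled by a power of $\gamma$. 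Choosing $R \ge C_0(\gamma^{3/4}+\gamma^{-3/4})$ with $C_0$ large then makes $\kappa = C_1(\gamma^{3/4}+\gamma^{-3/4})/(2R) < 1$. Since $F_j$ is closed (it is a closed subset of the closed angle $F_\infty$, being cut out by the continuous inequality $B_j(w) \ge 2|A(w)|$) and $G_{j,R}$ maps it into itself, the contraction mapping principle yields a unique fixed point $w_j \in F_j$.

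Finally I would argue uniqueness in all of $F_\infty$ and that the solutions of different equations are distinct. For uniqueness in $F_\infty$: if $w \in F_\infty$ solves $w = G_{j,R}(w)$, then $\re w = -B_j(w)/(2R)$ and $\im w = A(w)/(2R)$, and the angle condition $|\re w| \ge 2\im w$ rearranges exactly to $B_j(w) \ge 2|A(w)|$ (using $\im w \ge 0$, which itself follows since $A(w) \ge 0$ would be needed — one checks $A(w) \ge 0$ is forced, or handles the sign carefully), i.e. $w \in F_j$; hence any $F_\infty$-solution of the $j$-th equation already lies in $F_j$ and equals $w_j$. For distinctness across $j$: if $w_j = w_{j'}$ with $j \neq j'$ then subtracting the two fixed point equations gives $0 = G_{j,R}(w_j) - G_{j',R}(w_j) = \frac{i}{2R}\cdot 2\pi i (j - j') = -\pi(j-j')/R \neq 0$, a contradiction. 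I expect the main obstacle to be the uniform quantitative control of $\Phi'/\Phi$ (and of the error terms in the self-map verification) over the entire unbounded angle $F_\infty$ with explicit constants — in particular pinning down why the exponent $3/4$ and the constant $600$ are the right bookkeeping choices, which requires carefully splitting the angle into a neighbourhood of the origin, a bounded intermediate region, and a neighbourhood of infinity, and optimising the $\gamma$-dependence in each; the rest is the standard Banach fixed point machinery plus elementary geometry of the regions $F_j$.
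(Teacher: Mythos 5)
Your overall architecture (Banach fixed point on the closed set $F_j$, then showing any $F_\infty$-solution automatically lies in $F_j$, then distinctness across $j$ from $B_{j'}-B_j=2\pi(j'-j)$) is the same as the paper's, but two of your key analytic claims are wrong in sign, and they are exactly the claims the argument hinges on. On $F_\infty$ you have $\re w\le 0$ while $\re\sqm(w^2+i\gamma)\ge 0$, so for large $|w|$ the correct expansion is $\sqm(w^2+i\gamma)=-w\bigl(1+\tfrac{i\gamma}{2w^2}+O(|w|^{-4})\bigr)$, not $+w(\cdots)$; consequently the ratio $h(w)=\frac{\sqm(w^2+i\gamma)-w}{\sqm(w^2+i\gamma)+w}$ is \emph{large} there, $A(w)\approx 2\log|w|+\log\frac{4}{\gamma}\to+\infty$, and in fact $A(w)\ge 0$ on all of $F_\infty$ (the paper proves $|z-w|^2-|z+w|^2=-4(ux+vy)\ge 0$ for $z=\sqm(w^2+i\gamma)=x+iy$, $w=u+iv$, using $u\le 0\le v$, $|u|\ge 2v$, $x\ge|y|$). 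Your ``crude upper bound $A(w)\le 0$'' asserts the reverse inequality and is false; moreover it contradicts the conclusion you draw from it, since $\im G_{j,R}(w)=A(w)/(2R)\ge 0$ requires precisely $A(w)\ge 0$. This is not a cosmetic slip: the sign of $A$ is what puts $G_{j,R}(w)$ into $F_\infty$, and it is also the point you hedge on in your final uniqueness-in-$F_\infty$ paragraph, so as written that step is open. A second gap is the invariance $G_{j,R}(F_j)\subset F_j$ uniformly in $j$: you invoke only $B_j\ge\pi$, but $A$ evaluated at the image point grows like $2\log(j+\tfrac12)$ (from $|G_{j,R}(w)|\le\frac{\sqrt5\,\pi}{2R}(j+\tfrac12)$ and $A(\zeta)=\log\frac{|\sqm(\zeta^2+i\gamma)-\zeta|^2}{\gamma}\le\log\bigl(2+\frac{8|\zeta|^2}{\gamma}\bigr)$, where \eqref{bigr} supplies $\gamma R^2>10\pi^2$), so the needed inequality $2A(G_{j,R}(w))\le B_j(G_{j,R}(w))$ must be closed by comparing a logarithm of $j$ against $B_j\ge 2\pi(j-\tfrac12)$, i.e.\ $\log 2+2\log(j+\tfrac12)<\pi(j-\tfrac12)$; with only $B_j\ge\pi$ the estimate fails for large $j$ at fixed $R$.

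On the contraction step your route would work but is left as an unverified claim with the wrong expected size. The paper's shortcut is the identity $h(w)=\frac{1}{i\gamma}\bigl(\sqm(w^2+i\gamma)-w\bigr)^2$, which gives exactly $\frac{d}{dw}G_{j,R}(w)=\frac{-i}{R\,\sqm(w^2+i\gamma)}$ (your $\Phi'/\Phi$ is identically $-2/\sqm(w^2+i\gamma)$, so no splitting of the angle into regions is needed), and then $\min_{w\in F_\infty}|w^2+i\gamma|=\gamma\cos\bigl(2\arctan\tfrac12\bigr)>\gamma/2$ yields $|G_{j,R}'(w)|<1$ already for $R\gtrsim\gamma^{-1/2}$; the constants $C_0=600$ and the exponent $3/4$ in \eqref{bigr} are not calibrated by the contraction estimate (they are needed for $\gamma R^2>10\pi^2$ here and, more seriously, for the later eigenvalue and Lieb--Thirring bounds). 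So: fix the branch of $\sqm$ on the left half-plane and the sign of $A$, make the $j$-uniform comparison $2A\le B_j$ explicit, and replace the guessed bound on $\Phi'/\Phi$ by the exact derivative formula; without these repairs the proposal does not establish the proposition.
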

\begin{proof}
A key ingredient of the proof is the contraction mapping principle (see, e.g., \cite[Theorem V.18]{ReedSim1}) on the 
complete metric space $(F_j,|\cdot|)$ with the usual absolute value on $\bc$ as a distance.

Fix $j \in \bn$.
Suppose we can show that for $R$ satisfying \eqref{bigr},
  \begin{enumerate}
  \item[(a)] $G_{j,R}: F_j \to F_j$,
  \item[(b)] $G_{j,R}$ is a strict contraction mapping.
  \end{enumerate}

Then, the map $G_{j,R}:F_j \to F_j$ has a unique fixed point, and so the equation $w = G_{j,R}(w)$ has a unique solution in $F_j$.
Moreover, there are no solutions for the latter equation outside $F_j$. Indeed, any solution $w\in F_\infty$ satisfies
$$ w=G_{j,R}(w)=\frac{-B_j(w) + i A(w)}{2R} \ \Rightarrow \ B_j(w)\ge 2|A(w)| $$
so $w \in F_j$. 
So, it suffices to prove the statements (a) and (b) above.

Put
$$ w=u+iv, \quad z=\sqm\bra{w^2+i\g}=x+iy. $$
Let us show first that for each $w\in F_\infty$,
  \begin{equation}
    \label{eq:Sinf-facts}
x=\re\sqm\bra{w^2 + i \gamma} \geq 0, \quad  |y|=\bigl|\im \sqm\bra{w^2 + i \gamma}\bigr| \leq x=\re\sqm\bra{w^2 + i \gamma}.
  \end{equation}
Indeed, the first inequality follows from the definition of $\sqm$. As for the second one, since $\re(z^2)=\re(w^2)$ and $|u|\ge 2v$, we have
$$ u^2-v^2=x^2-y^2, \quad x^2=y^2+u^2-v^2\ge y^2+3v^2 \ \Rightarrow \ |y|\le x, $$
as claimed.

Step 1. To prove the statement (a), we show first that the following inequalities hold 
  \begin{enumerate}
  \item $\re G_{j,R}(w)<0\le\im G_{j,R}(w)$, \ $w\in F_\infty$,
  \item $|\re G_{j,R}(w)|\ge2\,\im G_{j,R}(w)$, \ \ \ $w\in F_j$.
  \end{enumerate}
In view of the definition of $B_j(w)=-2R\,\re G_{j,R}(w)$, and the bounds \eqref{boundsb} for $B_j$, the left inequality in (1) is obvious. 
To prove the right one, it suffices to show that $A(w) \geq 0$ for all $w \in F_\infty$.
We write
\begin{equation*}
|z\pm w|^2 =|z|^2+|w|^2\pm 2\,\re(\bar w z) =|z|^2+|w|^2\pm 2(ux+vy),
\end{equation*}
and so
$$ |z-w|^2-|z+w|^2=-4(ux+vy). $$
As we know, $|u|\ge 2v$ for $w\in F_\infty$, and also $x\ge |y|$, by \eqref{eq:Sinf-facts}. Hence,
$$ |vy|\le \frac{|u|x}2\le |ux|, \quad ux+vy\le ux+|vy|\le ux+|ux|=0, $$
which implies 
$$ |z-w|^2-|z+w|^2=-4(ux+vy)\ge0, \quad A(w)=\log\Bigl|\frac{z-w}{z+w}\Bigr|\ge0, $$
and (1) follows.
(2) is just the definition of $F_j$. So, $G_{j,R}: F_j \to F_\infty$.

Next, we want to check that for $R$ satisfying \eqref{bigr},
\begin{equation}\label{intosj}
B_j\bigl(G_{j,R}(w)\bigr)\ge 2\,|A\bigl(G_{j,R}(w)\bigr)|, \qquad w\in F_j,
\end{equation}
or, in other words, $G_{j,R}(w)\in F_j$.
It is shown above that, for $w \in F_j$, we have  $G_{j,R}(w)\in F_\infty$ and $|A\bigl(G_{j,R}(w)\bigr)|=A\bigl(G_{j,R}(w)\bigr)\ge0$. Then,
\begin{equation*}
\begin{split}
A\bigl(G_{j,R}(w)\bigr) &=\log\frac{\Bigl|\sqm\bra{G_{j,R}^2(w)+i\g}-G_{j,R}(w)\Bigr|^2}{\g} \\
&\le\log\frac{2(4|G_{j,R}(w)|^2+\g)}{\g}=\log\left(\frac{8|G_{j,R}(w)|^2}{\g}+2\right).
\end{split}
\end{equation*}
For $w\in F_j$ one has $2|A(w)|\le B_j(w)$, and so, by \eqref{boundsb},
$$ |G_{j,R}(w)|^2=\frac{A^2(w)+B_j^2(w)}{4R^2}\le \frac{5B_j^2(w)}{16 R^2}\le \frac{5\pi^2}{4R^2}\Bigl(j+\frac12\Bigr)^2.$$
Hence,
\begin{equation}\label{eq:A-upper}
  A\bigl(G_{j,R}(w)\bigr)\le \log\Bigl(2+\frac{10\pi^2(j+\frac12)^2}{\g R^2}\Bigr).
\end{equation}
Clearly, $10\pi^2<\gamma R^2$ for $R$ satisfying \eqref{bigr}, so we come to
\begin{equation}\label{bounda} 
A\bigl(G_{j,R}(w)\bigr)\le \log\Bigl(2+\Bigl(j+\frac12\Bigr)^2\Bigr)<\log\Bigl(2\Bigl(j+\frac12\Bigr)^2\Bigr), \quad j\in\bn. 
\end{equation}
Elementary calculus shows that
$$ \log 2+2\log\Bigl(j+\frac12\Bigr)<\pi\Bigl(j-\frac12\Bigr), \quad j\in\bn, $$
and so $2\,A\bigl(G_{j,R}(w)\bigr)\le B_j\bigl(G_{j,R}(w)\bigr)$, which completes the proof of \eqref{intosj}. The statement (a) is verified.

Step 2. We shall proceed with the statement (b). Let $h$ denote the function
  \begin{equation}
    \label{eq:h-defn}
    h(w) := \frac{\sqm\bra{w^2 + i \gamma} - w}{\sqm\bra{w^2 + i \gamma} + w} = \frac{1}{i \gamma} \bigl(\sqm\bra{w^2 + i \gamma} - w\bigr)^2.
  \end{equation}
In view of \eqref{eq:Sinf-facts} and $u=\re w\le0$, it is easy to see that for each $w\in F_\infty$, 
$$ \sqm\bra{w^2 + i \gamma}-w\in G:=\{\z\in\bc: \ \re \z \geq 0, \ |\im\z|\le\re\z \},  $$
and so $h: F_\infty \to \ovl\bc_-$.  

We conclude that the branch $\log_-$  of the logarithm (corresponding to $\arg_-$) is continuously differentiable on $h(F_\infty)$.
By the expression for $G_{j,R}$ in  (\ref{eq:fixed-point-eq-logm}), $G_{j,R}$ is continuously differentiable on $F_\infty$.
A direct computation yields
   \begin{equation*}
     \frac{d}{d w}G_{j,R}(w) = \frac{-i}{R \, \sqm\bra{w^2 + i \gamma}}.
   \end{equation*}
It is easy to show (see the definition of $F_\infty$ \eqref{sinfty}) that
$$ \min_{w\in F_\infty}|w^2+i\gamma|=C\gamma, \qquad C=\cos\Bigl(2\arctan\frac12\Bigr)>\frac1{2}, $$
and so
$$ \left|\frac{d}{dw}G_{j,R}(w)\right|<1, \qquad w\in F_\infty, $$
as long as $R$ satisfies \eqref{bigr}. Hence, $G_{j,R}:F_j \to F_j$ is a strict contraction mapping for such $R$,
completing the proof.
\end{proof}

\subsection{The number of eigenvalues and Lieb--Thirring sums for $L_{\g,R}$}

Now that existence of solutions for the family of equations (\ref{eq:fixed-point-family}) has been established, we may prove lower bounds for 
Lieb--Thirring sums. Throughout the remainder of the section, we assume that $j \in \bn$ and $R$ satisfies \eqref{bigr},
and we let $w_j = w_j(\gamma,R) \in F_j$ denote the unique solution to the equation $w = G_{j,R}(w)$ in $F_j$.

As it turns out, one has to impose some restriction on the values $j$ to guarantee that $w_j$ corresponds to an eigenvalue. Precisely,
assume that
\begin{equation}\label{restj}
1\le j\le M_R:=\left\lfloor\frac1{32 \pi} \frac{\gamma R^2}{\log R}\right\rfloor\,.
\end{equation}

\begin{lemma}\label{lem:JR-cond}
For $R$ satisfying $\eqref{bigr}$ and $j$ satisfying $\eqref{restj}$, the inequalities 
\begin{equation}\label{imwj}
-\frac{\g}2\le \im w_j^2\le0
\end{equation}
hold, so $z_j^2=w_j^2+i\g\in\bc_+$ and $z_j^2\in\s_d(L_{\g,R})$.
\end{lemma}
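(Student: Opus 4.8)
The plan is to read off $\re w_j$ and $\im w_j$ directly from the fixed point identity $w_j=G_{j,R}(w_j)=\frac{-B_j(w_j)+iA(w_j)}{2R}$ and then estimate the product $B_j(w_j)A(w_j)$, which is essentially $-2R^2\,\im w_j^2$. Writing $w_j=u_j+iv_j$, we get $u_j=-\frac{B_j(w_j)}{2R}$ and $v_j=\frac{A(w_j)}{2R}$; by \eqref{boundsb} we have $B_j(w_j)\ge\pi>0$, and $A(w_j)\ge0$ since $A\ge0$ on $F_\infty$ (this is exactly what Step~1 of the proof of Proposition~\ref{prop:ex-and-uniq} shows). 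Hence $\im w_j^2=2u_jv_j=-\frac{B_j(w_j)A(w_j)}{2R^2}\le0$, which is already the right-hand inequality in \eqref{imwj}.

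For the left-hand inequality it suffices to prove $B_j(w_j)A(w_j)\le\gamma R^2$, and I would bound the two factors separately. On one hand \eqref{boundsb} gives $B_j(w_j)<2\pi(j+\tfrac12)\le3\pi j$ for $j\ge1$. On the other hand, since $w_j\in F_j$ and $w_j=G_{j,R}(w_j)$, the estimate \eqref{bounda} applies and yields $A(w_j)<\log\!\bigl(2(j+\tfrac12)^2\bigr)\le\log(8j^2)$. Feeding in the restriction \eqref{restj}, $j\le M_R\le\frac{\gamma R^2}{32\pi\log R}$, gives $B_j(w_j)A(w_j)<3\pi j\log(8j^2)\le\frac{3\gamma R^2}{32\log R}\log(8j^2)$, so the whole matter reduces to the scalar inequality $\log(8j^2)\le\tfrac{32}{3}\log R$. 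This in turn follows by bounding $j$ once more by $\frac{\gamma R^2}{32\pi\log R}$ and invoking the hypothesis \eqref{bigr}, which forces $R\ge1200$ and (from $R\ge600\gamma^{3/4}$) $\gamma\le(R/600)^{4/3}<R^{4/3}$; the computation is trivial when $\gamma\le1$ and uses $\log\gamma<\tfrac43\log R$ when $\gamma>1$. This establishes \eqref{imwj}.

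It remains to draw the spectral conclusion. From \eqref{imwj} we get $\im z_j^2=\gamma+\im w_j^2\in[\tfrac{\gamma}{2},\gamma]$, so $z_j^2=w_j^2+i\gamma\in\bc_+$. Moreover $w_j\ne0$, since $G_{j,R}(0)=-\pi j/R\ne0$; consequently $A(w_j)>0$ (were $A(w_j)=0$ we would have $v_j=0$, and the equality case of the identity $|z-w|^2-|z+w|^2=-4(ux+vy)$ from the proof of Proposition~\ref{prop:ex-and-uniq} forces $w_j=0$), so in fact $w_j\in\bc_+$. Since $w_j$ solves \eqref{eq:fixed-point-family} and $w_j^2+i\gamma\in\bc_+$, Lemma~\ref{lem:family-eigval} gives $z_j^2=w_j^2+i\gamma\in\sigma_d(L_{\g,R})$, as claimed.

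The only point requiring genuine care is the scalar estimate $\log(8j^2)\le\tfrac{32}{3}\log R$ in the middle step: this is precisely where the explicit constant $32\pi$ in the threshold $M_R$ of \eqref{restj} has to be balanced against the explicit lower bound \eqref{bigr} on $R$, and the argument is sensitive to weakening either constant too much. Everything else is routine bookkeeping once the real and imaginary parts of $w_j$ have been expressed through $A$ and $B_j$.
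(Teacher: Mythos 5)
Your proof is correct and follows essentially the same route as the paper's: read off $\im w_j^2=-\tfrac{B_j(w_j)A(w_j)}{2R^2}\le 0$ from the fixed-point identity, then combine \eqref{boundsb}, \eqref{bounda}, the restriction $j\le M_R$ in \eqref{restj} and the inequality $R\ge 600\,\g^{3/4}$ from \eqref{bigr} to get $B_j(w_j)A(w_j)\le\g R^2$, and finish with Lemma~\ref{lem:family-eigval}. The only differences are in the bookkeeping of constants (you absorb the $+\tfrac12$ into $3\pi j$ and split into $\g\le1$, $\g>1$ instead of the paper's auxiliary bounds on $\g R^2/\log R$) and your explicit check that $w_j\neq0$ and $A(w_j)>0$, hence $w_j\in\bc_+$, which the paper leaves implicit when invoking Lemma~\ref{lem:family-eigval}.
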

\begin{proof}
  
  Firstly, we claim that for all $\g>0$ and $R$ satisfying \eqref{bigr}, we have
  \begin{equation}
    \label{eq:Phi-gam-lower}
  \Phi_\g(R):=\frac{\gamma R^2}{\log R} >\frac{C_0^2}{2\log C_0}.
  \end{equation}
Since $R\ge 2C_0>\sqrt{e}$, the function $\Phi_\gamma(R)$ is monotonically increasing and for each $\g>0$
\begin{equation*}
\begin{split}
\Phi_\g(R)  &\ge f(\g):=C_0^2\,\frac{\g\Bigl(\g^{3/4}+\g^{-3/4}\Bigr)^2}{\log C_0 +\log\Bigl(\g^{3/4}+\g^{-3/4}\Bigr)} \\
&=C_0^2\,\frac{\g^3+2\g^{3/2}+1}{\sqrt{\g}\log C_0 +\sqrt{\g}\log\Bigl(\g^{3/2}+1\Bigr)-\frac34\sqrt{\g}\log\g}\,. 
\end{split}
\end{equation*}
Since $f(\g)\le f(\g^{-1})$, $0<\g\le1$, and $C_0>e^2$, we see that
\begin{equation*}
\min_{\g>0}f(\g) =\min_{0<\g\le 1}f(\g)\ge \frac{C_0^2}{\log C_0+\log 2+\frac3{2e}}>\frac{C_0^2}{\log C_0+2}>\frac{C_0^2}{2\log C_0}\,, 
\end{equation*}
proving \eqref{eq:Phi-gam-lower}.

Next, we have
\begin{equation}\label{mrbelow}
M_R> \frac{\Phi_\g(R)}{32\pi}-1>\frac1{32\pi}\,\frac{C_0^2}{2\log C_0}-1\ge 1,
\end{equation}
 as long as
$$ \frac{C_0^2}{2\log C_0}>64\pi, $$
which certainly true for the value $C_0$ in \eqref{bigr}. By \eqref{mrbelow},
\begin{equation*}
\frac{\Phi_\g(R)}{32\pi}>2, \qquad \frac{\Phi_\g(R)}{96\pi}>\frac23>\frac12\,.
\end{equation*}
We assume that $1\le j\le M_R$, so
\begin{equation}\label{jplus12}
j+\frac12\leq \frac{\Phi_\g(R)}{32\pi}+\frac12<\frac{\Phi_\g(R)}{24\pi}=\frac1{24\pi}\,\frac{\g R^2}{\log R}\,. 
\end{equation}
Since $A(w_j)\ge0$, $B_j(w_j)>0$, we have
\begin{equation}\label{wjsq}
\begin{split}
w_j^2 &=G_{j,R}^2(w_j)=\frac{B_j^2(w_j)-A^2(w_j)-2i B_j(w_j)A(w_j)}{4R^2}\,, \\
\im w_j^2 &=-\frac{B_j(w_j)A(w_j)}{2R^2}\le0.
\end{split}
\end{equation}
To prove the lower bound in \eqref{imwj}, we apply \eqref{bounda} and \eqref{jplus12}
\begin{equation*}
A(w_j)\le  \log 2+2\log\Bigl(j+\frac12\Bigr) < 2\log\g+4\log R,
\end{equation*}
and hence
\begin{equation*}
B_j(w_j)A(w_j)\le 4\pi\Bigl(j+\frac12\Bigr)(\log\g+2\log R).
\end{equation*}
But $R>\g^{3/4}$, $\log R>\frac34\log \g$, and so, by \eqref{jplus12},
\begin{equation*}
\log\g+2\log R <\frac{10}3\log R, \ \
B_j(w_j)A(w_j) \le \frac16\,\frac{\g R^2}{\log R}\cdot\frac{10}3\log R<\g R^2.
\end{equation*}
The lower bound in \eqref{imwj} follows. The remaining claims follow from an application of Lemma \ref{lem:family-eigval}.
The proof is complete.
\end{proof}

The result of Lemma \ref{lem:JR-cond} immediately implies a lower bound for the  number $N(L_{\g,R})$ of eigenvalues of $L_{\g,R}$, 
counting algebraic multiplicities. 

\begin{corollary}
  \label{col:number}
  For $R$ satisfying $\eqref{bigr}$, we have the lower bound
 \begin{equation*}
    N(L_{\g,R}) \geq \left\lfloor\frac1{32 \pi} \frac{\gamma R^2}{\log R}\right\rfloor.
  \end{equation*}
\end{corollary}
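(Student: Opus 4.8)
The plan is to obtain the corollary as an immediate consequence of Lemma~\ref{lem:JR-cond} and the uniqueness assertion of Proposition~\ref{prop:ex-and-uniq}; essentially all the analytic work has already been carried out there, and what remains is bookkeeping.

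Fix $R$ satisfying \eqref{bigr} and let $M_R$ be as in \eqref{restj}. First I would invoke Proposition~\ref{prop:ex-and-uniq} to obtain, for each $j\in\{1,\dots,M_R\}$, the unique solution $w_j\in F_j\subset F_\infty$ of the fixed-point equation \eqref{eq:fixed-point-family}, recalling in addition that the solutions attached to different values of $j$ are different. Then Lemma~\ref{lem:JR-cond} applies to every such $j$ (this is exactly the range $1\le j\le M_R$ considered there) and yields $z_j^2:=w_j^2+i\g\in\bc_+$ together with $z_j^2\in\s_d(L_{\g,R})$. Thus I will have produced $M_R$ points of the discrete spectrum, indexed by $j$.

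The one point deserving a line of verification is that $z_1^2,\dots,z_{M_R}^2$ are pairwise distinct, so that they contribute at least $M_R$ to the count $N(L_{\g,R})$ of eigenvalues with algebraic multiplicities. Since $j\mapsto w_j$ is injective, it suffices to check that $w\mapsto w^2$ is injective on $F_\infty$: if $w_1,w_2\in F_\infty$ satisfy $w_1^2=w_2^2$, then, writing $w_k=r_ke^{i\theta_k}$ with $\theta_k\in[\pi-\arctan\tfrac12,\pi]$ as in \eqref{sinfty}, we get $r_1=r_2$ and $\theta_1-\theta_2\in\pi\bz$; but $|\theta_1-\theta_2|\le\arctan\tfrac12<\pi$, so $\theta_1=\theta_2$, hence $w_1=w_2$. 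Consequently the $z_j^2$ are distinct, and since each is an eigenvalue of finite algebraic multiplicity at least one, we conclude $N(L_{\g,R})\ge M_R=\bigl\lfloor\tfrac1{32\pi}\,\tfrac{\g R^2}{\log R}\bigr\rfloor$, which is the claim.

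I do not expect any genuine obstacle here: the hard content---existence, uniqueness, and the spectral interpretation of the solutions $w_j$---is already established in Section~\ref{sec:db}, and the only step requiring a remark is the elementary injectivity of $w\mapsto w^2$ on the angle $F_\infty$, which follows simply because $F_\infty$ is contained in a sector of opening less than $\pi$.
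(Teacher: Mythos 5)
Your proposal is correct and follows essentially the same route as the paper, which treats the corollary as an immediate consequence of Proposition \ref{prop:ex-and-uniq} and Lemma \ref{lem:JR-cond}: the $M_R$ distinct solutions $w_j\in F_j$ give eigenvalues $w_j^2+i\gamma$ of $L_{\g,R}$. Your extra check that $w\mapsto w^2$ is injective on the sector $F_\infty$ (so the eigenvalues are pairwise distinct) is exactly the small point the paper leaves implicit, and your argument for it is fine.
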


The next result amplifies the above corollary and will be used in our study of the sums $S_{\alpha, \beta}(H_q)$ below.  
An analogous result for Schr\"odinger operators on the real line has previously been obtained by Cuenin in \cite[Theorem~4]{CuenSparse},
by a different method.
Let $N(L_{\gamma,R};\Omega)$ denote the number of eigenvalues of $L_{\gamma, R}$  in a given region $\Omega \subset \bc$, counting algebraic multiplicities.

\begin{proposition}\label{prop:number-box}
   There exists constants  $R_0, C_1> 0$, depending only on $\gamma$, such that for the regions
  \begin{equation}
    \label{eq:Sigma_R-defn}
    \Sigma_{R} := \left\lbrace\lambda \in \bc: \tfrac{\gamma}{2} \leq \im(\lambda) \leq \gamma,
       \frac{C^{-1}_1R^2}{\log^2R} \leq |\lambda| \leq \frac{ C_1R^2}{\log^2R} \right\rbrace
  \end{equation}
   and all $R \geq R_0$, we have
  \begin{equation}
    \label{eq:lower-box}
    N(L_{\gamma,R};\Sigma_R) \geq \frac{1}{128 \pi} \frac{\gamma R^2}{\log R}. 
  \end{equation}
\end{proposition}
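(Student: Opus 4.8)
The plan is to show that a fixed proportion of the eigenvalues $z_j^2 = w_j^2 + i\gamma$ furnished by Lemma \ref{lem:JR-cond} already lie inside the box $\Sigma_R$, and then to count them. Fix $\gamma>0$. Choose $R_0 = R_0(\gamma)$ large enough that $R_0 \geq C_0(\gamma^{3/4}+\gamma^{-3/4})$, so that \eqref{bigr} holds for all $R \geq R_0$, and also large enough to absorb the finitely many further lower bounds on $\gamma R^2/\log R$ encountered below (all of which depend only on $\gamma$); say $\gamma R^2/\log R \geq 128\pi$ for $R \geq R_0$. Throughout we take $R \geq R_0$ and restrict to the index range $j \in J_R := \{\, j \in \bn : \lceil M_R/2 \rceil \leq j \leq M_R \,\}$, with $M_R$ as in \eqref{restj}. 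Every such $j$ satisfies \eqref{restj}, so by Lemma \ref{lem:JR-cond} we have $z_j^2 = w_j^2 + i\gamma \in \sigma_d(L_{\gamma,R})$, and since $-\gamma/2 \leq \im w_j^2 \leq 0$ we get $\gamma/2 \leq \im z_j^2 = \gamma + \im w_j^2 \leq \gamma$, which is precisely the imaginary-part constraint in \eqref{eq:Sigma_R-defn}.

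The heart of the matter is the two-sided estimate $C_1^{-1} R^2/\log^2 R \leq |z_j^2| \leq C_1 R^2/\log^2 R$ for $j \in J_R$, with $C_1 = C_1(\gamma)$. By \eqref{wjsq} one has $\re w_j^2 = (B_j^2(w_j) - A^2(w_j))/(4R^2)$ and $|w_j^2| = (A^2(w_j) + B_j^2(w_j))/(4R^2)$. For the lower bound I would use only $w_j \in F_j$, i.e.\ $2|A(w_j)| \leq B_j(w_j)$, whence $\re w_j^2 \geq 3 B_j^2(w_j)/(16 R^2)$; as $\re(i\gamma)=0$ this gives $|z_j^2| \geq \re z_j^2 = \re w_j^2 \geq 3 B_j^2(w_j)/(16 R^2)$, and $j \geq \lceil M_R/2 \rceil$ together with \eqref{boundsb} forces $B_j(w_j) \geq 2\pi(j-\tfrac12) \geq \pi(M_R-1) \gtrsim \gamma R^2/\log R$ for $R \geq R_0$, which yields the lower bound. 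For the upper bound I would invoke \eqref{bounda} with $w = w_j$, namely $A(w_j) < \log\bigl(2(j+\tfrac12)^2\bigr)$; since $j+\tfrac12 \leq M_R + \tfrac12 \lesssim \gamma R^2/\log R$ this gives $A(w_j) = O(\log R)$, so $A^2(w_j)$ is negligible against $B_j^2(w_j)$, which by \eqref{boundsb} satisfies $B_j(w_j) < 2\pi(j+\tfrac12) \leq \gamma R^2/(16\log R) + \pi$; hence $|w_j^2| \lesssim_\gamma R^2/\log^2 R$, and then $|z_j^2| \leq |w_j^2| + \gamma$ is bounded the same way for $R \geq R_0$. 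Taking $C_1 = C_1(\gamma)$ to dominate the constants coming from both sides puts every $z_j^2$, $j \in J_R$, in $\Sigma_R$.

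Finally we count, using that distinct indices give distinct eigenvalues. By Proposition \ref{prop:ex-and-uniq} the $w_j$ are pairwise distinct. Moreover, by \eqref{boundsb}, $B_j(w_j) \geq \pi$, so $\re w_j = -B_j(w_j)/(2R) < 0$; since every point of $-F_\infty$ has nonnegative real part (cf.\ \eqref{sinfty}), this rules out $w_j = -w_{j'}$ for any $j'$. Hence for $j \neq j'$ neither $w_j = w_{j'}$ nor $w_j = -w_{j'}$ can hold, so $w_j^2 \neq w_{j'}^2$ and the eigenvalues $\{\, z_j^2 = w_j^2 + i\gamma : j \in J_R \,\}$ are pairwise distinct, each contributing at least $1$ to $N(L_{\gamma,R};\Sigma_R)$. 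Consequently
\[
N(L_{\gamma,R};\Sigma_R) \;\geq\; \#J_R \;=\; M_R - \bigl\lceil\tfrac{M_R}{2}\bigr\rceil + 1 \;\geq\; \tfrac{M_R}{2} \;\geq\; \tfrac12\Bigl(\tfrac{1}{32\pi}\tfrac{\gamma R^2}{\log R} - 1\Bigr) \;=\; \tfrac{1}{64\pi}\tfrac{\gamma R^2}{\log R} - \tfrac12 \;\geq\; \tfrac{1}{128\pi}\tfrac{\gamma R^2}{\log R},
\]
the last step using $\gamma R^2/\log R \geq 64\pi$; this is \eqref{eq:lower-box}. I expect the only real labour — rather than a conceptual obstruction — to be the bookkeeping of constants in the two-sided bound on $|z_j^2|$, so that a single $C_1(\gamma)$ works uniformly over $j \in J_R$ and $R \geq R_0$; the rest is a direct assembly of Lemma \ref{lem:JR-cond}, Proposition \ref{prop:ex-and-uniq}, and the estimates \eqref{boundsb}, \eqref{bounda} and \eqref{wjsq}.
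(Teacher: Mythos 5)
Your proposal is correct and follows essentially the same route as the paper: restrict to the upper half of the admissible index range $j\le M_R$, use Lemma \ref{lem:JR-cond} together with the $A$, $B_j$ estimates \eqref{boundsb}, \eqref{bounda}, \eqref{wjsq} to place $\lambda_j=w_j^2+i\gamma$ in $\Sigma_R$, and then count indices. The only differences are cosmetic — the paper bounds $|\lambda_j|$ via $|w_j|^2\mp\gamma$ with the cruder estimate $|A(w_j)|\le CR$, while you use $\re w_j^2\ge 3B_j^2(w_j)/(16R^2)$ and $A(w_j)=O(\log R)$, and you make explicit the distinctness of the eigenvalues (via $\re w_j<0$, so $w_j\neq\pm w_{j'}$) that the paper leaves implicit.
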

\begin{proof}
  In this proof, we shall say that a statement holds for large enough $R$ if there exists $R_0 = R_0(\gamma) > 0$
  such that the statement holds for all $R \geq R_0$.
  Furthermore, $C = C(\gamma) > 0$ will denote a constant that may change from line~to~line.

  Consider the unique solution $w_j = w_j(\gamma, R)$ of the equation $w = G_{j,R}(w)$ in $F_j$, which exists for large enough $R$, with
  \begin{equation}\label{eq:j-range-box}
    \left\lceil \frac{1}{64 \pi} \frac{\gamma R^2}{\log R} \right\rceil \leq j \leq \left\lfloor \frac{1}{32\pi} \frac{\gamma R^2}{\log R} \right\rfloor.
  \end{equation}
  By Lemma \ref{lem:JR-cond}, $\lambda_j := w_j^2 + i \gamma$ is an eigenvalue of $L_{\gamma,R}$ with $\tfrac{\gamma}{2} \leq \im(\lambda_j) \leq \gamma$.

  By (\ref{eq:A-upper}), we have
  \begin{equation*}
    |A(w_j)| = |A(G_{j,R}(w_j))| \leq \log \left( 2 + \frac{10 \pi^2 (j + \tfrac{1}{2})^2}{\gamma R^2} \right) \leq C R
  \end{equation*}
  for large enough $R$.
  Using the inequality $B_j(w_j) \geq 2 \pi(j - \tfrac{1}{2})$ and the lower bound in (\ref{eq:j-range-box}), we have
  \begin{equation*}
    |\lambda_j| \geq |w_j|^2 - \gamma = \frac{|B_j(w_j)|^2 + |A(w_j)|^2}{4R^2} - \gamma \geq \frac{CR^2}{\log^2R}   
  \end{equation*}
  for large enough $R$.
  On the other hand,  using the inequality $B_j(w_j) \leq 2 \pi(j + \tfrac{1}{2})$ and the upper bound in (\ref{eq:j-range-box}), we have
  \begin{equation*}
    |\lambda_j| \leq  \frac{|B_j(w_j)|^2 + |A(w_j)|^2}{4R^2} + \gamma \leq \frac{CR^2}{\log^2R} 
  \end{equation*}
  for large enough $R$.
  It follows that $\lambda_j \in \Sigma_R$ for some constant $C_1 = C_1(\gamma) > 0$ and all large enough $R$.
  Finally, we have
  \begin{equation*}
    N(L_{\gamma,R};\Sigma_R) \geq  \left\lfloor \frac{1}{32\pi}\frac{\gamma R^2}{\log R}\right\rfloor
    -  \left\lceil \frac{1}{64 \pi} \frac{\gamma R^2}{\log R}  \right\rceil \geq \frac{1}{128 \pi}\frac{\gamma R^2}{\log R}
  \end{equation*}
  for large enough $R$, completing the proof.
\end{proof}

\begin{remark}
  An upper bound for the number of eigenvalues for Schr\"odinger operators with potentials of the form $q_R = q+i\gamma\chi_{[0,R]}$,
  where $q$ is compactly supported,
  is obtained in \cite[Theorem 8]{StepaBounds}
\begin{equation}\label{upnum}
N(H_{q_R})\le \frac{11}{\log 2}\,\frac{\gamma R^2}{\log R}
\end{equation}
for large enough $R$. Our particular case corresponds to $q\equiv 0$ and demonstrates that \eqref{upnum} is optimal.
\end{remark}

\smallskip

The result of Theorem \ref{quanbou} states that for each $\ep>0$ there exists a constant 
$K(\ep)>0$, independent from $q$, so that 
$$ S_\ep(H_q)\le K(\ep)\,\|q\|_{1}^{1+\ep} $$ 
for any integrable potential $q$. 
Our goal here is to obtain corresponding lower bounds for the operators $L_{\g,R}$ with potentials $q_{db}$ \eqref{eq:db-pot-defn} and, thereby, to demonstrate the optimal 
character of this upper bound with respect to $\varepsilon$.
Precisely, we will show that the value $S_0(L_{\g,R})$ tends to infinity fast enough as $R\to\infty$. 

\begin{theorem}[= Theorem \ref{th:db-lower-intro}]\label{litrbelow}
  Suppose that $R$ satisfies $\eqref{bigr}$.
  
$(i)$ We have the lower bound
\begin{equation}\label{belowlts0}
S_0(L_{\g,R})\ge \frac{\|q_{db}\|_{1}}{16\pi}\,\log R=\frac{\g R}{16\pi}\,\log R.
\end{equation}

$(ii)$ Let $0<\ep<1$. Under the stronger assumption on $R$
\begin{equation}\label{ranger1}
R\ge \frac4{e^2\g}(64\pi)^{2/\ep}+1,
\end{equation}
we have the lower bound
\begin{equation}\label{ep>0}
S_\ep(L_{\g,R})\ge \frac1{256\pi\ep}\,\frac{(\g R)^{1+\ep}}{\log^\ep R}\,.
\end{equation}
\end{theorem}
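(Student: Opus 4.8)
The strategy is to combine the estimates for the individual zeros $w_j$ of $\varphi_R$ established in Lemma~\ref{lem:JR-cond} and Proposition~\ref{prop:ex-and-uniq}. Recall that, for $j$ in the admissible range \eqref{restj}, the number $z_j^2 = w_j^2 + i\gamma$ is an eigenvalue of $L_{\gamma,R}$, and by \eqref{dhk09}, each such eigenvalue contributes at least $|z_j^2|^{-\frac{1-\ep}{2}}\,\mathrm{dist}(z_j^2,\br_+) \ge |z_j|^{\ep}\,\im(z_j^2)$ to $S_\ep(L_{\gamma,R})$ (using $\mathrm{dist}(z_j^2,\br_+) \ge |z_j|\,|\im z_j| \ge \im(z_j^2)$ appropriately, or directly estimating $\mathrm{dist}(z_j^2,\br_+)$). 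Since $\im(z_j^2) = \gamma + \im w_j^2 \ge \gamma/2$ by \eqref{imwj}, the main point is to lower-bound $|z_j|^\ep = |w_j^2 + i\gamma|^{\ep/2}$, equivalently $|w_j|$, from below and then sum over $j$.

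\textbf{Key steps.} First I would record that $|w_j|^2 \ge B_j^2(w_j)/(4R^2) \ge \pi^2(2j-1)^2/(4R^2)$ from \eqref{boundsb} and the fixed-point equation \eqref{eq:fixed-point-family}, so that $|w_j| \gtrsim j/R$; hence $|z_j^2| = |w_j^2 + i\gamma| \ge |w_j|^2 - \gamma \gtrsim (j/R)^2$ once $j$ is not too small, say $j \ge cR$ for a suitable constant. Next, for part $(i)$ (the case $\ep = 0$), each admissible eigenvalue contributes at least $\mathrm{dist}(z_j^2,\br_+)/|z_j^2|^{1/2} \ge \frac12\,|z_j|^{-1}\,\mathrm{dist}(z_j^2,\br_+) \ge \frac14\,\im(z_j^2)/|z_j| \ge \frac{\gamma}{8|z_j|}$, and since $|z_j| \le |w_j| + \sqrt{\gamma} \lesssim j/R + 1 \lesssim j/R$ for $j$ in the bulk of the range (using $R \ge C_0\gamma^{3/4}$ and $j \le M_R$), this contribution is $\gtrsim \gamma R/j$. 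Summing $\sum_{j \le M_R} \gamma R/j \sim \gamma R \log M_R \sim \gamma R \log R$ (since $M_R \asymp \gamma R^2/\log R$ so $\log M_R \asymp \log R$) yields the bound \eqref{belowlts0} after tracking constants. For part $(ii)$, the contribution of the $j$-th eigenvalue is $\ge \mathrm{dist}(z_j^2,\br_+)\,|z_j^2|^{-(1-\ep)/2} \gtrsim \gamma\,|z_j|^{\ep-1} \cdot |z_j| = \gamma |z_j|^{\ep} \gtrsim \gamma (j/R)^\ep$ — wait, more carefully: $\mathrm{dist}(z_j^2,\br_+)/|z_j^2|^{(1-\ep)/2} \ge \frac14 \gamma |z_j|^{-1}\cdot |z_j|^{\ep} = \frac{\gamma}{4}|z_j|^{\ep-1}$, and with $|z_j| \lesssim j/R$ this is $\gtrsim \gamma (R/j)^{1-\ep}$; summing $\sum_{j\le M_R} \gamma(R/j)^{1-\ep} = \gamma R^{1-\ep}\sum_{j\le M_R} j^{\ep-1} \asymp \gamma R^{1-\ep}\cdot \frac{M_R^\ep}{\ep} \asymp \frac{\gamma R^{1-\ep}}{\ep}\big(\frac{\gamma R^2}{\log R}\big)^\ep = \frac{(\gamma R)^{1+\ep}}{\ep \log^\ep R}$, which is \eqref{ep>0}. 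The stronger hypothesis \eqref{ranger1} on $R$ is what guarantees $M_R$ is large enough that the constant $\frac{1}{256\pi\ep}$ comes out; I would use it to ensure, e.g., $M_R^\ep \ge \frac12(\frac{\gamma R^2}{32\pi\log R})^\ep$ and to absorb the lower-order terms.

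\textbf{Main obstacle.} The conceptual content is entirely in the earlier lemmas; the difficulty here is bookkeeping the explicit constants so that the clean forms $\frac{1}{16\pi}$ and $\frac{1}{256\pi\ep}$ emerge. The delicate points are: (a) correctly lower-bounding $\mathrm{dist}(z_j^2,\br_+)$ using \eqref{dhk09} together with $\im(z_j^2) \ge \gamma/2$ and the location of $z_j$ in the sector — one must be careful that $z_j$ lies where $|z_j| \asymp |w_j|$ up to the additive $\sqrt\gamma$, which requires $j \gtrsim \sqrt\gamma R$, automatically satisfied in the relevant part of the range under \eqref{bigr}; (b) restricting the sum to $j \in [\lceil M_R/2\rceil, M_R]$ (say) so that both $|z_j| \gtrsim j/R \gtrsim R/\log R \to \infty$ and the harmonic/power sums are comparable to their full-range values — discarding the small-$j$ terms only loses a constant factor since $\sum_{M_R/2 \le j \le M_R} 1/j \ge \frac12$ and $\sum_{M_R/2\le j\le M_R} j^{\ep-1} \ge c\, M_R^\ep/\ep$; and (c) in part $(ii)$, using \eqref{ranger1} to ensure $\frac{\gamma R^2}{32\pi \log R} \ge (64\pi)^{2/\ep}$ or similar, so that $M_R \ge \frac12 \cdot \frac{\gamma R^2}{32\pi\log R}$ and the final constant is controlled. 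None of these steps is deep, but each must be done with care to land the stated constants.
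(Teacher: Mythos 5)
Your selection of eigenvalues and per-term estimates are exactly the paper's: take $\lambda_j=w_j^2+i\gamma$, $1\le j\le M_R$, from Lemma \ref{lem:JR-cond}, use $\mathrm{dist}(\lambda_j,\br_+)\ge\im\lambda_j\ge\gamma/2$ together with $|w_j|\le C(j+1)/R$ (which follows from \eqref{boundsb}, the fixed-point equation and the definition of $F_j$), and then sum over $j$. The gap is in how you carry out the summation. Your per-term bound ``contribution $\gtrsim\gamma R/j$'' (resp.\ $\gtrsim\gamma(R/j)^{1-\ep}$) requires $|z_j|\lesssim j/R$, i.e.\ $\sqrt\gamma\lesssim j/R$, so it is only valid for $j\gtrsim\sqrt\gamma\,R$; for smaller $j$ the denominator is dominated by $\sqrt\gamma$ and the $j$-th term is only of size $\sqrt\gamma$ (resp.\ $\gamma^{(1+\ep)/2}$). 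Your proposed remedy in obstacle (b) --- restrict to $j\in[\lceil M_R/2\rceil,M_R]$ and claim only a constant factor is lost --- does not work: $\sum_{M_R/2\le j\le M_R}1/j\approx\log 2$ while the full harmonic sum is $\approx\log M_R$, so in part (i) this truncation loses precisely the $\log R$ factor and yields only $S_0(L_{\gamma,R})\gtrsim\gamma R$, not \eqref{belowlts0}; similarly $\sum_{M_R/2\le j\le M_R}j^{\ep-1}\le 2M_R^\ep$, so your claimed lower bound $c\,M_R^\ep/\ep$ is false once $\ep$ is small, and in part (ii) you lose the $1/\ep$ that is part of the stated constant $\frac1{256\pi\ep}$ in \eqref{ep>0}.

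The correct way to finish (and what the paper does) is to keep the additive $\sqrt\gamma$ in the denominator: bound the $j$-th term below by $\frac{\gamma/2}{\sqrt\gamma+2\pi(j+1)/R}$ (resp.\ $\frac{\gamma/2}{\gamma^{(1-\ep)/2}+(2\pi(j+1)/R)^{1-\ep}}$) and compare the sum over \emph{all} $1\le j\le M_R$ with the integral of $(\sqrt\gamma+2\pi x/R)^{-1}$ (resp.\ of $(1+x^{1-\ep})^{-1}$ after rescaling). This gives $S_0\ge\frac{\gamma R}{4\pi}\log\frac{\sqrt\gamma+2\pi(M_R+1)/R}{\sqrt\gamma+4\pi/R}$, and the role of hypothesis \eqref{bigr} (this is exactly where $C_0=600$ enters) is to show this ratio exceeds $R^{1/4}$, producing $\frac{\gamma R}{16\pi}\log R$; in part (ii) the analogous computation splits into a main term and a boundary term $I_1-I_2$, and hypothesis \eqref{ranger1} is what forces the boundary term (the analogue of the $j\lesssim\sqrt\gamma R$ range) to be at most half of the main term $\asymp\frac{(\gamma R)^{1+\ep}}{\ep\log^\ep R}$, so the $1/\ep$ survives. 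Equivalently, if you prefer to truncate, the cut must be made near $j\asymp\sqrt\gamma\,R$ (where $|w_j|\asymp\sqrt\gamma$), not at $M_R/2$; the hypotheses \eqref{bigr} and \eqref{ranger1} are exactly what make the ratio $M_R/(\sqrt\gamma R)\asymp\frac{\sqrt\gamma R}{32\pi\log R}$ large enough (at least $R^{1/4}$, resp.\ at least $2^{1/\ep}$ up to constants) for the logarithm and the $1/\ep$ to come out. As written, your argument establishes strictly weaker bounds than \eqref{belowlts0} and \eqref{ep>0}.
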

\begin{proof}
(i). The bound from below for $S_0(L_{\g,R})$ arises when we take a subset of the eigenvalues, precisely, $\l_j=z_j^2=w_j^2+i\g$, with 
$j$ from \eqref{restj}. So, for $\ep=0$ we have, in view of Lemma \ref{lem:JR-cond},
\begin{equation}
  \label{eq:mod-for-remark-1}
S_0(L_{\g,R})\ge \sum_{j=1}^{M_R} \frac{\im(w_j^2+i\g)}{|w_j^2+i\g|^{1/2}}\ge 
\frac{\g}{2}\sum_{j=1}^{M_R} \frac1{{\sqrt\g}+|w_j|}\,.
\end{equation}
But, owing to \eqref{boundsb},
\begin{equation*}
|w_j|^2=|G_{j,R}(w_j)|^2=\frac{|A(w_j)|^2+|B_j(w_j)|^2}{4R^2}\le \frac{5|B_j(w_j)|^2}{16R^2}\le \frac{5\pi^2}{4R^2}\Bigl(j+\frac12\Bigr)^2, 
\end{equation*}
and so
\begin{equation*}
S_0(L_{\g,R})\ge \frac{\g}{2}\sum_{j=1}^{M_R} \frac1{\sqrt\g+\frac{2\pi}{R}(j+1)}\,.
\end{equation*}

An elementary inequality
$$ \sum_{j=1}^N \frac1{a+b(j+1)}>\int_2^{N+1} \frac{dx}{a+bx}=\frac1{b}\log\frac{a+b(N+1)}{a+2b} $$
with $a=\sqrt\g$, $b=2\pi R^{-1}$, $N=M_R$, gives
\begin{equation}\label{ltbel1}
S_0(L_{\g,R})\ge \frac{\g R}{4\pi}\log\frac{\sqrt\g+2\pi R^{-1}(M_R+1)}{\sqrt\g+4\pi R^{-1}}\ge\frac{\g R}{4\pi}
\log\frac{1+\frac{\sqrt\g R}{16\log R}}{1+\frac{4\pi}{\sqrt\g R}}\,.
\end{equation}

Let us check that for $R$ satisfying \eqref{bigr}, one has
\begin{equation*}
\frac{1+\frac{\sqrt\g R}{16\log R}}{1+\frac{4\pi}{\sqrt\g R}}>R^{1/4}, \ \ \frac{\sqrt\g R}{16\log R}+1>R^{1/4}+\frac{4\pi}{\sqrt\g R^{3/4}}.
\end{equation*}
Indeed,
\begin{equation*}
\frac{\sqrt\g R^{3/4}}{16\log R}=\frac{\g^{1/2}R^{2/3}}{16}\,\frac{R^{1/12}}{\log R}>\frac{C_0^{2/3}}{16}\,\frac{e}{12}>1
\end{equation*}
as long as 
$$ C_0>\left(\frac{192}{e}\right)^{3/2}, $$
which is true for $C_0$ in \eqref{bigr} (at this point the value $C_0= 600$ comes about). Next,
$$ \frac{4\pi}{\sqrt{\g}R^{3/4}}=\frac{4\pi}{\g^{1/2}R^{2/3}R^{1/12}}<\frac{4\pi}{C_0^{2/3}}<1 $$
as long as $C_0>(4\pi)^{3/2}$.
The bound \eqref{belowlts0} follows directly from \eqref{ltbel1}.

(ii). We have, as above in (i),
\begin{equation}
  \label{eq:mod-for-remark-2}
S_\ep(L_{\g,R})\ge \frac{\g}2\sum_{j=1}^{M_R} \frac1{\g^{\frac{1-\ep}2}+|w_j|^{1-\ep}}
\ge \frac{\g^{\frac{1+\ep}2}}2 \sum_{j=1}^{M_R}\frac1{1+\Bigl(\frac{2\pi}{\sqrt\g R}(j+1)\Bigr)^{1-\ep}},
\end{equation}
and so
$$ S_\ep(L_{\g,R})\ge \frac{\g^{1+\frac{\ep}2}R}{4\pi}\int_{\b_1}^{\b_2}\frac{dy}{1+y^{1-\ep}}\,, \quad \b_1:=\frac{4\pi}{\sqrt\g R}, \quad
\b_2:=\frac{2\pi(M_R+1)}{\sqrt\g R}\,. $$
An elementary inequality $1+y^{1-\ep}\le2^\ep(1+y)^{1-\ep}$ leads to the bound
\begin{equation*} 
\begin{split}
S_\ep(L_{\g,R}) &\ge \frac{\g^{1+\frac{\ep}2}R}{4\pi 2^\ep}\int_{\b_1+1}^{\b_2+1}\frac{dt}{t^{1-\ep}}
=\frac{\g^{1+\frac{\ep}2}R}{4\pi\ep 2^\ep}\Bigl\{(1+\b_2)^\ep-(1+\b_1)^\ep\Bigr\} \\
&=I_1-I_2.
\end{split}
\end{equation*}

We apply once again $(1+\b_2)^\ep\ge 2^{\ep-1}(1+\b_2^\ep)$ to estimate the first term
\begin{equation}\label{boundi1}
I_1 \ge \frac{\g^{1+\frac{\ep}2}R}{8\pi\ep}\bigl(1+\b_2^\ep)\ge \frac{\g^{1+\frac{\ep}2}R}{8\pi\ep}\left(\frac{2\pi(M_R+1)}{\sqrt\g R}\right)^\ep 
> \frac{(\g R)^{1+\ep}}{128\pi\ep}\frac1{\log^\ep R}\,.
\end{equation}
Concerning the second term, note that \eqref{ranger1} implies $\sqrt{\g} R>8$, and so
$$ (1+\b_1)^\ep=\Bigl(1+\frac{4\pi}{\sqrt\g R}\Bigr)^\ep<1+\frac{\pi}2<\pi. $$
Then
\begin{equation*}
I_2\le \frac{\g^{1+\frac{\ep}2}R}{4\ep 2^\ep}=\frac{(\g R)^{1+\ep}}{4\ep 2^\ep}\frac1{(\sqrt\g R)^\ep}<\frac{(\g R)^{1+\ep}}{4\ep}\,\frac1{(\sqrt\g R)^\ep}\,. 
\end{equation*}
But, under assumption \eqref{ranger1},
$$ \frac{\sqrt{\g} R}{\log R}=\frac{\sqrt{\g} R^{1/2}}{\log R}\,R^{1/2}\ge\frac{\sqrt\g e}2\,R^{1/2}\ge (64\pi)^{1/\ep}\,, $$
so
$$ \left(\frac{\log R}{\sqrt\g R}\right)^\ep\leq\frac1{64\pi}\,, \qquad \frac1{(\sqrt\g R)^\ep}\leq\frac1{64\pi\log^\ep R}. $$
Hence,
$$ I_2\le \frac{(\g R)^{1+\ep}}{256\pi\ep\log^\ep R}\,. $$
Comparing the latter with \eqref{boundi1}, we come to \eqref{ep>0}. The proof is complete.
\end{proof}

\begin{remark}\label{rem:result-S-B}
  The same methods lead to lower bounds for more general sums, which were considered in \cite{BogStam}.
  Let $p \geq 1$. A slight modification of the proof of Theorem \ref{litrbelow} (i) yields
  \begin{equation}
    \label{eq:bog-stam-p-half}
   \sum_{\lambda \in \sigma_d(L_{\g,R})} \frac{\mathrm{dist}^p(\lambda,\br_+)}{|\lambda|^{1/2}} \geq \frac{\gamma^p R \log R}{8\pi\cdot 2^p}\,,
  \end{equation}
  provided $R$ satisfies \eqref{bigr}.
  Indeed, the only place in the proof of Theorem \ref{litrbelow} (i) that needs to be modified is \eqref{eq:mod-for-remark-1}, and there
  we use the inequality 
  $$ \im(w_j^2 + i \gamma)^p \geq (\gamma/2)^p. $$
  
  Furthermore, by the spectral enclosure \cite{FLS} mentioned in the introduction, we have 
  $$ |\lambda|^{s - 1/2} \leq \|q_{db}\|_1^{2s -1} = (\gamma R)^{2s -1},\quad \lambda \in \sigma_d(L_{\g,R}), \quad s\ge\frac12\,, $$ 
  so it follows from \eqref{eq:bog-stam-p-half} that
  \begin{equation}
    \label{eq:bog-stam-p-s}
   \sum_{\lambda \in \sigma_d(L_{\g,R})} \frac{\mathrm{dist}^p(\lambda,\br_+)}{|\lambda|^s} \geq \frac{1}{8\pi\cdot 2^p}\frac{\gamma^p R \log R}{(\gamma R)^{2s -1}}\,.
 \end{equation}
 
  Now take $R = n$ and $\gamma = n^{-1}$ for $n \in \bn$. Then, $R$ satisfies \eqref{bigr}, and so \eqref{eq:bog-stam-p-s} holds, for large enough $n$.  
  Noting that $\|q_{db}\|^p_{L^p(\br_+)} = \gamma^p R$ and $\gamma R = 1, $ and taking the limit $n \to \infty$, we conclude that
  \begin{equation}
    \label{eq:sup-Hq-infty}
    \sup_{0\neq q \in L^p(\br_+)\cap L^1(\br_+)} \frac{1}{\|q\|^p_{L^p(\br_+)}} 
    \sum_{\lambda \in \sigma_d(H_q)} \frac{\mathrm{dist}^p(\lambda,\br_+)}{|\lambda|^s} = \infty.
  \end{equation}
   In view of Proposition \ref{evenext} below, the statement \eqref{eq:bog-stam-p-s} holds analogously for Schr\"odinger operators
  on $L^2(\br)$ with symmetric potentials $i \g \chi_{[-R,R]}$, hence \eqref{eq:sup-Hq-infty} holds for Schr\"odinger operators
   on $L^2(\br)$, which implies \cite[Theorem 9]{BogStam}.   
\end{remark}

Recall that the generalised Lieb--Thirring sum $S_{\a,\b}(H_q)$ is defined by \eqref{genltsum}. The problem we are interested in now 
is the range of positive parameters  $(\a,\b)$ for which
\begin{equation*}
\css_{\a,\b}:=\sup_{0\neq q \in L^1(\br_+)}\frac{S_{\a,\b}(H_q)}{\|q\|_1}<\infty.
\end{equation*}
The results are illustrated in Figure \ref{fig:S_ab}.
\begin{proposition}\label{generltsup}
We have
\begin{equation}\label{genltfin}
\css_{\a,\b}<\infty, \qquad {\rm for} \ \ \a>\frac12\,, \ \b\ge1,
\end{equation}
and
\begin{equation}\label{genltinf}
\css_{\a,\b}=\infty, \qquad {\rm for} \ \ \a > 0\,, \ 0<\b<1, \ \ {\rm and} \ \ 0<\a\le\frac12\,, \ \b=1. 
\end{equation}
\end{proposition}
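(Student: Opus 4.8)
The statement separates into three independent regimes, and the plan is to dispatch each by reduction to a result already in the excerpt: \eqref{genltfin} to Theorem~\ref{quanbou}, and the two families in \eqref{genltinf} to the lower bounds of Theorem~\ref{litrbelow}$(i)$ and of Proposition~\ref{prop:number-box}. Recall from \eqref{genltsum} that $S_{\a,\b}(H_q)=\bigl(\sum_{\l\in\s_d(H_q)}\mathrm{dist}^\b(\l,\br_+)\,|\l|^{\a-\b}\bigr)^{1/(2\a)}$, and from \cite{FLS} that $|\l|\le\|q\|_1^2$ for every $\l\in\s_d(H_q)$. For \eqref{genltfin} I would start from the elementary bound $\mathrm{dist}^\b(\l,\br_+)=\mathrm{dist}^{\b-1}(\l,\br_+)\,\mathrm{dist}(\l,\br_+)\le|\l|^{\b-1}\mathrm{dist}(\l,\br_+)$, valid because $0\le\mathrm{dist}(\l,\br_+)\le|\l|$ and $\b\ge1$; dividing by $|\l|^{\b-\a}$ and summing gives $S_{\a,\b}^{2\a}(H_q)\le\sum_{\l}\mathrm{dist}(\l,\br_+)\,|\l|^{\a-1}=S_{2\a-1}(H_q)$. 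Since $2\a-1>0$, Theorem~\ref{quanbou} yields $S_{2\a-1}(H_q)\le K(2\a-1)\|q\|_1^{2\a}$, hence $S_{\a,\b}(H_q)\le K(2\a-1)^{1/(2\a)}\|q\|_1$, i.e. $\css_{\a,\b}<\infty$.

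For the divergence in the range $\b=1$, $0<\a\le\tfrac12$, I would exploit the spectral enclosure directly. Since $S_{\a,1}^{2\a}(H_q)=\sum_\l\mathrm{dist}(\l,\br_+)\,|\l|^{\a-1}$ and $1-2\a\ge0$, the bound $|\l|\le\|q\|_1^2$ gives, term by term, $\mathrm{dist}(\l,\br_+)\,|\l|^{-1/2}\le\|q\|_1^{1-2\a}\,\mathrm{dist}(\l,\br_+)\,|\l|^{\a-1}$; summing over $\s_d(H_q)$ yields $S_0(H_q)\le\|q\|_1^{1-2\a}\,S_{\a,1}^{2\a}(H_q)$, that is, $S_{\a,1}(H_q)/\|q\|_1\ge\bigl(S_0(H_q)/\|q\|_1\bigr)^{1/(2\a)}$ (an equality when $\a=\tfrac12$). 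Testing with the dissipative barriers $q=i\g\chi_{[0,R]}$ — fix $\g>0$ and let $R\to\infty$ through values obeying \eqref{bigr} — we have $\|q\|_1=\g R$, and Theorem~\ref{litrbelow}$(i)$ gives $S_0(L_{\g,R})/(\g R)\ge(16\pi)^{-1}\log R\to\infty$. Hence $S_{\a,1}(L_{\g,R})/\|q\|_1\to\infty$ and $\css_{\a,1}=\infty$.

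The remaining range $0<\b<1$, $\a>0$ is where the work concentrates, and there I would pass to the eigenvalue level via Proposition~\ref{prop:number-box}. Taking again $q=i\g\chi_{[0,R]}$ with $\g$ fixed and $R\to\infty$, discard from the sum defining $S_{\a,\b}^{2\a}(L_{\g,R})$ every eigenvalue except those in the box $\Sigma_R$: for $R\ge R_0(\g)$ there are at least $\tfrac1{128\pi}\tfrac{\g R^2}{\log R}$ of them, and on $\Sigma_R$ one has $\mathrm{dist}(\l,\br_+)\ge\im\l\ge\tfrac{\g}{2}$ and $C_1^{-1}R^2/\log^2R\le|\l|\le C_1R^2/\log^2R$. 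Using the lower estimate for $|\l|$ when $\a\ge\b$ and the upper estimate when $\a<\b$ — so that in either case $|\l|^{\a-\b}\ge c_{\g,\a,\b}\,(R^2/\log^2R)^{\a-\b}$ — one obtains
\[
S_{\a,\b}^{2\a}(L_{\g,R})\ \ge\ c_{\g,\a,\b}'\,\frac{\g R^2}{\log R}\Bigl(\frac{R^2}{\log^2R}\Bigr)^{\a-\b}\ =\ c_{\g,\a,\b}''\,\frac{R^{2(1+\a-\b)}}{\log^{1+2(\a-\b)}R}
\]
for all large $R$. Taking $2\a$-th roots and dividing by $\|q\|_1=\g R$ leaves
\[
\frac{S_{\a,\b}(L_{\g,R})}{\g R}\ \ge\ c\,\frac{R^{(1-\b)/\a}}{(\log R)^{(1+2(\a-\b))/(2\a)}},
\]
and since $(1-\b)/\a>0$ the right-hand side tends to $\infty$ as $R\to\infty$, giving $\css_{\a,\b}=\infty$.

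All of the above is elementary once Proposition~\ref{prop:number-box} is in hand; the only genuine input is its conclusion that $L_{\g,R}$ has on the order of $\g R^2/\log R$ eigenvalues clustered at modulus $\asymp R^2/\log^2R$ with imaginary parts of order $\g$. The one place needing attention is bookkeeping in the last regime: carrying the exponents of $R$ and $\log R$ correctly through the $2\a$-th root, observing that the sign of $\b-\a$ dictates which half of the two-sided estimate for $|\l|$ is the useful one, and choosing $R$ large enough to meet simultaneously the hypotheses \eqref{bigr} and $R\ge R_0(\g)$.
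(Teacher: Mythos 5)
Your proposal is correct and follows essentially the same route as the paper: monotonicity in $\b$ plus Theorem \ref{quanbou} for \eqref{genltfin}, Proposition \ref{prop:number-box} with the eigenvalue cluster in $\Sigma_R$ for $\a>0$, $0<\b<1$, and the $S_0$ lower bound of Theorem \ref{litrbelow}$(i)$ combined with the spectral enclosure $|\l|\le\|q\|_1^2$ for $0<\a\le\tfrac12$, $\b=1$. The only cosmetic difference is that in the last regime the paper invokes the inequality \eqref{eq:bog-stam-p-s} from Remark \ref{rem:result-S-B} (with $p=1$, $s=1-\a$), which you re-derive directly via the same enclosure argument.
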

\begin{proof}
  Theorem \ref{quanbou} implies that we have $\css_{\alpha,1} < \infty$ for $\alpha > \tfrac{1}{2}$.
  Furthermore, by $\mathrm{dist}(\lambda, \br_+) \leq |\lambda|$, the function $f(\beta) = S_{\a,\b}(H_q)$ is monotone decreasing for fixed $\alpha$,
  from which (\ref{genltfin}) follows.
  
  By Proposition \ref{prop:number-box}, for $\a > 0$ and $0<\b < 1$, we have
 \begin{align*}
    S_{\a,\b}^{2 \a}(H_q) & \geq N(L_{\g,R};\Sigma_R) \inf_{\l \in \Sigma_R} \left( \frac{\mathrm{dist}(\l,\br_+)}{|\l|} \right)^\b |\l|^\a \\
                        & \geq \frac{1}{128 \pi} \frac{\g R^2}{\log R} \left( \frac{\g}{2} \right)^\b \left( \frac{\min \left\lbrace C_1,C^{-1}_1 \right\rbrace R^2}{\log^2 R} \right)^{\a-\b} \\
                        & = C \frac{R^{2(1-\b)}}{(\log R)^{1 + 2 \a - 2\b}} (\g R)^{2 \a}
 \end{align*}
 for some constant $C = C(\g) > 0$ and all large enough $R$.
 The first statement in (\ref{genltinf})  follows by considering the limit $R \to \infty$. 

  By (\ref{eq:bog-stam-p-s}) with $p =\beta = 1$ and $s = 1 - \alpha \geq \tfrac{1}{2}$, we have
\begin{equation*}
  S_{\alpha, \beta}^{2 \a}(L_{\gamma,R}) = \sum_{\lambda \in \sigma_d(L_{\g,R})}  \frac{\mathrm{dist}(\l, \br_+)}{|\lambda|^{1-\a}}
  \geq \frac{1}{16 \pi} (\gamma R)^{2 \alpha} \log R  
\end{equation*}
for large enough $R$.
The second statement in (\ref{genltinf})  follows by again considering the limit $R \to \infty$. 
\end{proof}
\begin{figure}
        \centering
        \includegraphics[width = 0.75\textwidth]{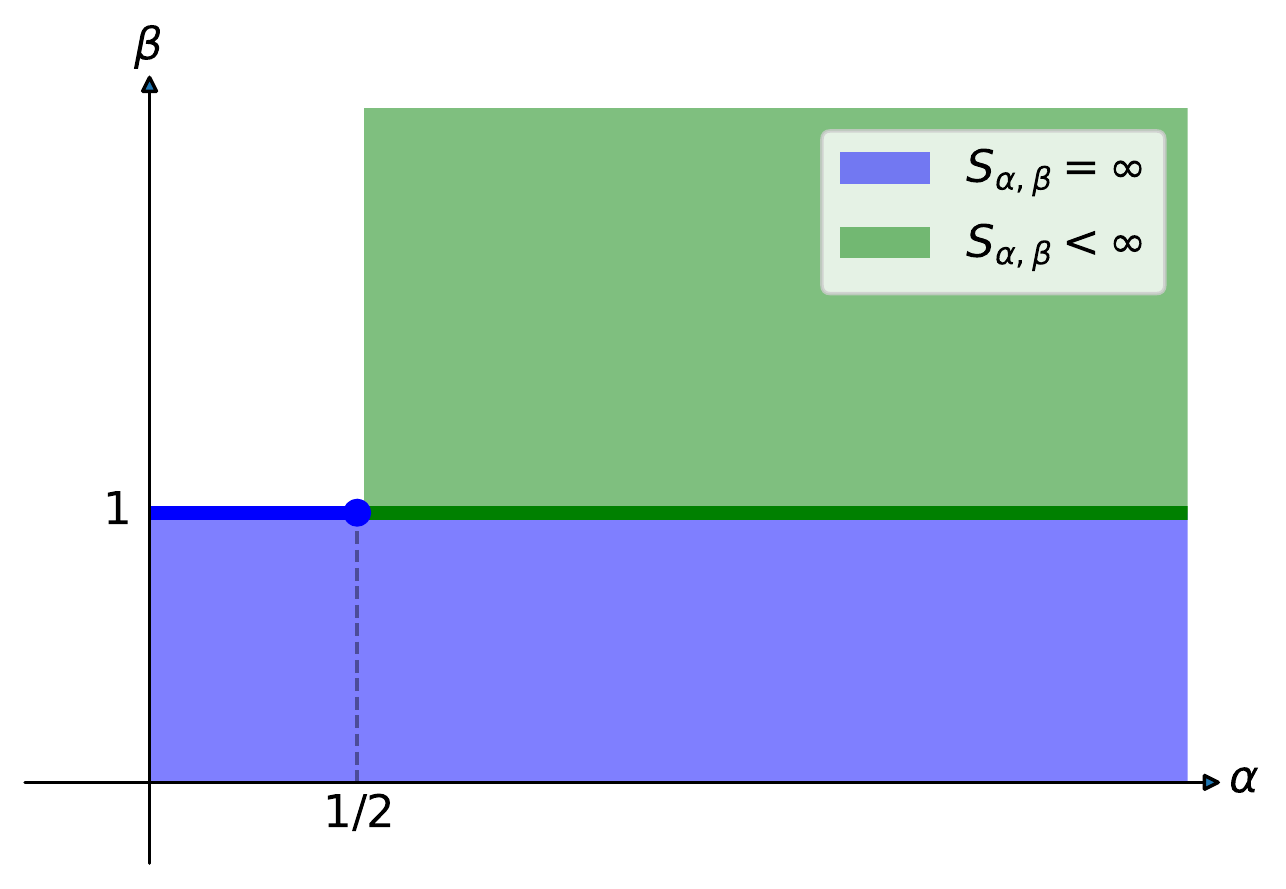}
        \caption{An illustration of Proposition \ref{generltsup}.}\label{fig:S_ab}
\end{figure}

We are in a position now to obtain a two-sided bound for the Jensen sums $J(L_{\g,R})$.  Recall that $\|q_{db}\|_1=\gamma R$.

\begin{proposition}
  \label{prop:two-sided}
For all $R$ satisfying \eqref{bigr}, the following two-sided inequality holds
\begin{equation}\label{twojen}
\frac1{32\pi}\le \frac{J(L_{\g,R})}{\gamma R\log R}\le 42.
\end{equation}
\end{proposition}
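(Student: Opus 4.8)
The strategy is to derive each side of \eqref{twojen} from results already established in the excerpt, using the equivalence \eqref{jenvslt} between $J(H)$ and $S_0(H)$ as the bridge to the Lieb--Thirring quantities.

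For the lower bound, I would start from the left inequality in \eqref{jenvslt}, namely $J(L_{\g,R}) \geq \tfrac12 S_0(L_{\g,R})$, and then invoke part $(i)$ of Theorem \ref{litrbelow}, which gives $S_0(L_{\g,R}) \geq \tfrac{\g R}{16\pi}\log R$ whenever $R$ satisfies \eqref{bigr}. Combining these yields $J(L_{\g,R}) \geq \tfrac{\g R}{32\pi}\log R$, which is exactly the left-hand inequality in \eqref{twojen}. This part is essentially immediate.

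For the upper bound, the natural route is to apply Corollary \ref{compsup} to the compactly supported potential $q_{db} = i\g\chi_{[0,R]}$, whose support is contained in $[0,R]$ and which has $\|q_{db}\|_1 = \g R$. Corollary \ref{compsup} then gives
\begin{equation*}
J(L_{\g,R}) \le 7\left[\frac1{R} + \g R\bigl(1 + \log(1+\g R) + \log R\bigr)\right].
\end{equation*}
The remaining work is purely arithmetic: one must absorb all the terms on the right into a bound of the form $42\,\g R\log R$. Using $R$ satisfies \eqref{bigr} (so in particular $R \geq C_0 = 600$, hence $R > e$ and $\log R > 1$, and also $\g R \geq C_0\,\g(\g^{3/4}+\g^{-3/4}) \geq C_0$), one checks that $\tfrac1R \leq \g R\log R$, that $1 \leq \log R \leq \g R \log R / (\g R) $ contributes a bounded multiple, and crucially that $\log(1+\g R) \leq \log(1 + \g R) $ can be dominated by a constant times $\log R$. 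The one point requiring a small argument is comparing $\log(1+\g R)$ with $\log R$: since $\g R \geq 600$ and $R$ may be large while $\g$ small, one uses $\log(1+\g R) = \log\bigl((1+\g R)\bigr)$ and the fact that $R \geq C_0 \g^{-3/4}$ gives $\g R \geq C_0 \g^{1/4}$, but more simply $1 + \g R \leq 2\g R \leq 2 R \cdot \max(\g,1)$ will not suffice when $\g$ is large — so instead one notes $1+\g R \le (1+R)(1+\g) $ and treats $\g$ as fixed (the constant $42$ is allowed to be uniform, but here it genuinely is: since $\log(1+\g R) \le \log(1+\g) + \log(1+R) \le C(\g) + 2\log R$, and the $C(\g)$ term is dwarfed once we verify it against $\log R$ using $R \ge C_0(\g^{3/4}+\g^{-3/4})$). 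Collecting the pieces and bounding each coefficient, the total constant comes out below $42$.

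The main obstacle — really the only non-trivial point — is this constant-chasing in the upper bound: one must be careful that the $\log(1+\|q\|_1) = \log(1+\g R)$ term in \eqref{comsupbou} does not produce a $\g$-dependent blow-up relative to $\g R \log R$. The lower bound assumption \eqref{bigr} on $R$ is exactly what makes this work, by forcing $R$ (and hence $\log R$) large enough, depending on $\g$, to swallow the extra logarithmic terms. Everything else is a direct citation of Theorem \ref{litrbelow}, Corollary \ref{compsup}, and the inequality \eqref{jenvslt}.
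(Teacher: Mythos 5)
Your overall route is the same as the paper's: the lower bound in \eqref{twojen} follows exactly as you say, from \eqref{jenvslt} together with \eqref{belowlts0} of Theorem \ref{litrbelow}\,(i), and the upper bound is obtained by applying Corollary \ref{compsup} to $q_{db}=i\g\chi_{[0,R]}$ with $\|q_{db}\|_1=\g R$. The lower-bound half is complete. The gap is in the upper-bound arithmetic, which you yourself single out as the only non-trivial point but then do not carry out. First, your auxiliary claim ``$\g R\ge C_0\,\g(\g^{3/4}+\g^{-3/4})\ge C_0$'' is false: $\g(\g^{3/4}+\g^{-3/4})=\g^{7/4}+\g^{1/4}\to 0$ as $\g\to 0$, so \eqref{bigr} does not force $\g R\ge 600$ (it only gives $\g R\ge 600(\g^{7/4}+\g^{1/4})$). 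Second, the decisive step --- dominating $\log(1+\g R)$ by an explicit multiple of $\log R$ \emph{uniformly in} $\g$, and then checking that the resulting coefficients really sum to at most $42$ --- is only asserted (``the total constant comes out below 42''), and with the crude split $\log(1+\g R)\le\log(1+\g)+\log(1+R)\le C(\g)+2\log R$ left unquantified this is not automatic: treating $\g$ as fixed is not allowed, since \eqref{twojen} is claimed uniformly in $\g$.

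The clean way to close this (the paper's argument) is to note that \eqref{bigr} implies $R>e$ and $R^2>\g+\g^{-1}+1$; in particular $\g<R^2$, hence $1+\g R<R^3$ and
\begin{equation*}
\log(1+\g R)<3\log R .
\end{equation*}
Together with $\tfrac1R<\g R\log R$ (which uses $\g R^2\log R>1$, again from \eqref{bigr}) and $\g R<\g R\log R$, the bracket in \eqref{comsupbou} is bounded by $6\,\g R\log R$, and $7\cdot 6=42$ gives the right-hand inequality in \eqref{twojen}. Your alternative split can also be salvaged: for $\g>1$, \eqref{bigr} gives $\g\le(R/600)^{4/3}$, so $\log(1+\g)\le\tfrac43\log R$, and using $\log R\ge\log 1200$ one still lands below the constant $6$; but this quantitative verification is exactly what is missing from your write-up, and without it (and with the erroneous bound $\g R\ge 600$) the proof of the upper bound is incomplete.
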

\begin{proof}
The lower bound is a direct consequence of \eqref{belowlts0} and \eqref{jenvslt}.
To prove the upper bound, we apply Corollary \ref{compsup}, so
\begin{equation*}
J(L_{\g,R})\le 7\Bigl[\frac1{R}+\g R+\g R\log R+\g R\log(1+\g R)\Bigr].
\end{equation*}
Note that \eqref{bigr} implies $R>e$ and $R^2>\gamma+\gamma^{-1}+1$. Hence,
$$ \frac1{R}<\gamma R\log R, \quad \gamma R<\gamma R\log R, \quad \log(1+\gamma R)<3\log R, $$
and inequality \eqref{twojen} follows.
\end{proof}

\section{An integrable potential with divergent Jensen sum}\label{sec:q-inf}

The aim of this section is to construct a potential $q_\infty \in L^1(\br_+)$ such that $J(H_{q_\infty}) = \infty$.
We shall begin, in Sections \ref{subsec:half} and \ref{subsec:full}, by collecting some well-known facts about Schr\"odinger operators on both the half-line
and the full real line.
We shall then proceed to prove two spectral approximation lemmas in Section \ref{subsec:approx}.
These will give us information on the eigenvalues of Schr\"odinger operators on the half-line, for potentials consisting of a sum of  compactly supported
functions whose supports are separated  from one another by large enough distances. The consideration of Schr\"odinger operators on the full real line
is required in order to formulate one of these lemmas. With these tools at hand, the potential $q_\infty$ is constructed in Section \ref{subsec:main}.
\subsection{Case of the half-line}\label{subsec:half}
Consider the following differential equation on the positive half-line $\br_+$
\begin{equation}\label{difeqr+}
h[y]:=-y''+q(x)y=z^2y, \qquad q\in L^1(\br_+), \quad z\in\bc_+, 
\end{equation}
where the potential $q$ may be complex-valued. There exists a unique pair of solutions $e_\pm(\cdot,z;q)$ of \eqref{difeqr+},
such that $e_\pm(x,\cdot;q)$ are analytic on the upper half-plane $\bc_+$, and
\begin{equation}\label{asymjos}
\begin{split}
e_+(x,z;q) &=e^{ixz}(1+o(1)), \qquad e_+'(x,z;q) = iz e^{ixz}(1 +o(1)), \\
e_-(x,z;q) &=e^{-ixz}(1+o(1)), \quad \ e_-'(x,z;q) = -iz e^{-ixz}(1+o(1)),
\end{split}
\end{equation}
as $x\to +\infty$, uniformly on compact subsets of $\bc_+$ (see, e.g., \cite[Sections 2.2 and 2.3]{Naimark}).
The Wronskian satisfies
\begin{equation}\label{wronhalf}
W(z,q)=W(e_+, e_-)=-2iz.
\end{equation}
Recall that $H = H_q$ denotes the Schr\"odinger--Dirichlet operator on $L^2(\br_+)$.

\subsection{Case of the real line}\label{subsec:full}
Consider the following differential equation on the real line $\br$
\begin{equation}\label{difeqr}
\pzch[y]:=-y''+\pzcq(x)y=z^2y, \qquad \pzcq\in L^1(\br), \quad z\in\bc_+, 
\end{equation}
where the potential $\pzcq$ may be complex-valued.

The result below is likely to be well known. We provide the proof for the sake of completeness.

\begin{proposition}\label{line}
There exists a unique pair of solutions $\pzce_\pm(\cdot,z;\pzcq)$ of \eqref{difeqr}, known as the Jost solutions, such that
$\pzce_\pm(x,\cdot;\pzcq)$ are analytic on the upper half-plane $\bc_+$, 
\begin{equation}
  \label{asymjosr1}
  \pzce_+(x,z;\pzcq) =e^{ izx}(1+o(1)), \quad \pzce_+'(x,z;\pzcq) = ize^{izx}(1+o(1))
\end{equation}
as $x \to + \infty$, and
\begin{equation}
  \label{asymjosr2}
  \pzce_-(x,z;\pzcq) =e^{-izx}(1+o(1)), \quad \pzce_-'(x,z;\pzcq) = - ize^{-izx}(1+o(1))
\end{equation}
as $x \to - \infty$, uniformly on compact subsets of $\bc_+$.

$\lambda=z^2$ is the eigenvalue of the corresponding Schr\"odinger operator $\ch_\pzcq$ on $L^2(\br)$ if and only if $\pzce_+$ and $\pzce_-$
are proportional, that is, the Wronskian
\begin{equation*} 
W(z, \pzcq):=\pzce_+(0,z;\pzcq)\pzce'_-(0,z;\pzcq)-\pzce_-(0,z;\pzcq)\pzce'_+(0,z;\pzcq)=0. 
\end{equation*}
The algebraic multiplicity $\nu(\lambda,\ch_\pzcq)$ of the eigenvalue $\l=z^2$ equals the multiplicity of the corresponding zero of $W(\cdot, \pzcq)$.
\end{proposition}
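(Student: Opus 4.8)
The plan is to establish the three assertions of Proposition \ref{line} in order: existence and uniqueness of the Jost solutions $\pzce_\pm$ with the prescribed asymptotics and analyticity; the Wronskian criterion for $\lambda = z^2$ to be an eigenvalue of $\ch_\pzcq$; and the equality of algebraic multiplicity with the zero multiplicity of $W(\cdot,\pzcq)$. For the first part, I would reduce to the half-line theory already recalled in Section \ref{subsec:half}. Observe that $\pzce_+(\cdot,z;\pzcq)$ is, by definition, the Jost solution on $[0,\infty)$ for the potential $\pzcq|_{[0,\infty)}$ extended appropriately — more precisely, for any fixed $b \in \br$, the restriction of $\pzcq$ to $[b,\infty)$ is an $L^1$ function on a half-line, and the standard construction (the successive-approximations argument underlying Proposition \ref{jost}, with $a\equiv 1$) produces a unique solution of \eqref{difeqr} on $[b,\infty)$ with $\pzce_+(x,z;\pzcq) = e^{ixz}(1+o(1))$ and the matching derivative asymptotics as $x\to+\infty$, analytic in $z\in\bc_+$ uniformly on compacts. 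Since solutions of a linear second-order ODE extend uniquely to all of $\br$, and analyticity in $z$ is preserved under this extension (the solution at any point depends analytically on $z$ via the transfer matrix, which is entire in $z$), one gets $\pzce_+$ globally. The construction of $\pzce_-$ is the mirror image, using the substitution $x \mapsto -x$, which turns \eqref{difeqr} into the same type of equation with potential $\pzcq(-x)$ and reduces the $x\to-\infty$ asymptotics to the $x\to+\infty$ case. Uniqueness in both cases follows because any two solutions with the same leading asymptotics at $\pm\infty$ must coincide: their difference is a solution that is $o(e^{i xz})$ (resp.\ $o(e^{-ixz})$), and since the second linearly independent solution grows like $e^{-ixz}$ (resp.\ $e^{ixz}$) near $+\infty$ (resp.\ $-\infty$) — here $\im z > 0$ is used — the difference cannot contain a component of that solution, hence is a multiple of $\pzce_\pm$ itself with vanishing leading coefficient, so it is zero.

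For the second part, I would argue as follows. Because $\im z > 0$, the asymptotics \eqref{asymjosr1}–\eqref{asymjosr2} show that $\pzce_+(\cdot,z;\pzcq) \in L^2$ near $+\infty$ and $\pzce_-(\cdot,z;\pzcq)\in L^2$ near $-\infty$; conversely, any solution that is $L^2$ near $+\infty$ must be a multiple of $\pzce_+$ (the other solution grows exponentially there), and similarly near $-\infty$ for $\pzce_-$. Hence $z^2$ is an eigenvalue of $\ch_\pzcq$ — i.e.\ there is a nonzero $L^2(\br)$ solution of \eqref{difeqr} — if and only if a nonzero multiple of $\pzce_+$ coincides with a nonzero multiple of $\pzce_-$, i.e.\ iff $\pzce_+$ and $\pzce_-$ are linearly dependent, which is exactly the vanishing of their Wronskian $W(z,\pzcq)$. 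Since both solutions solve the same equation, $W(z,\pzcq)$ is independent of the base point, so evaluating at $0$ is legitimate; and $W(\cdot,\pzcq)$ is analytic on $\bc_+$ because $\pzce_\pm$ and their derivatives are.

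For the third part — the equality $\nu(\lambda,\ch_\pzcq) = $ multiplicity of the zero of $W(\cdot,\pzcq)$ — I would cite the general framework already invoked in the introduction for the half-line (the identification, via \cite[Theorem 5.4 and Lemma 6.2]{GLMZ}, of the algebraic/Riesz-projection multiplicity of an eigenvalue with the order of vanishing of the relevant Wronskian-type determinant), and note that the same Birman–Schwinger / perturbation-determinant machinery applies verbatim on the whole line: the perturbation determinant of $\ch_\pzcq$ relative to $-d^2/dx^2$ is, up to a nonvanishing analytic factor, the Wronskian $W(z,\pzcq)$, and the algebraic multiplicity of $\lambda = z^2$ equals the order of vanishing of this determinant at $z$. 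Alternatively, one can give a direct argument: a short computation shows that if $W$ has a zero of order $m$ at $z_0$, then differentiating the relation $\pzce_+ = c(z)\pzce_-$ (valid along the zero set) $m-1$ times produces a Jordan chain of length $m$ for $\ch_\pzcq$ at $\lambda_0 = z_0^2$, and conversely any generalized eigenfunction yields a corresponding vanishing of derivatives of $W$.

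The main obstacle I expect is the third part: carefully matching the "zero multiplicity of an analytic Wronskian" with the "rank of the Riesz projection" requires either a clean citation whose hypotheses genuinely cover the non-self-adjoint whole-line $L^1$ case, or a self-contained Jordan-chain bookkeeping argument that is somewhat delicate because one must track how differentiation in $z$ interacts with the spectral parameter $\lambda = z^2$ (the chain rule factors and the non-vanishing of $2z_0$ when $z_0\neq 0$, which holds since $z_0\in\bc_+$). Everything else — existence, uniqueness, analyticity, and the Wronskian eigenvalue criterion — is routine once reduced to the half-line results already in the paper.
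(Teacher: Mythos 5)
Your construction of $\pzce_\pm$ (extend the half-line Jost solutions across all of $\br$ using linear ODE theory, with analyticity carried by a transfer matrix that is entire in $z$, and the mirror substitution $x\mapsto -x$ for $\pzce_-$) and your Wronskian criterion (one-dimensionality of the space of solutions square-integrable near $+\infty$, resp.\ $-\infty$, because the complementary solution grows exponentially when $\im z>0$) are correct and are essentially the paper's own argument: the paper writes the extension explicitly through the entire fundamental system $c(x,z)$, $s(x,z)$ and the half-line Jost data $e_+(0,z;q_\pm)$, $e_+'(0,z;q_\pm)$ with $q_\pm(x)=\pzcq(\pm x)$, and proves the eigenvalue criterion by the same limit-point/growth dichotomy. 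The only soft spot is the multiplicity statement. The paper disposes of it with a single citation to a whole-line result (\cite[Theorem 28]{LatSuk}), which directly identifies the algebraic multiplicity of $\l=z^2$ with the order of the zero of $W(\cdot,\pzcq)$. You instead invoke the half-line machinery of \cite{GLMZ} and assert that it ``applies verbatim'' on the line, or sketch a Jordan-chain computation without carrying it out; as you yourself flag, that step is not yet a proof. The underlying claim (the perturbation determinant of $\ch_\pzcq$ relative to $-d^2/dx^2$ coincides with $W(z,\pzcq)$ up to a nonvanishing analytic factor, so orders of vanishing match the Riesz-projection rank) is true and standard, so your plan is sound, but to close it you need either the whole-line reference the paper uses or a genuinely completed Jordan-chain argument, including the bookkeeping for the change of variable $\l=z^2$ that you mention.
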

\begin{proof}
  The first statement, regarding the existence and analytic properties of the Jost solutions, may be seen by extending appropriate Jost solutions on the half-line.
  Indeed, let $s(x,z)$ and $c(x,z)$ denote the solutions of (\ref{difeqr}) such that
  \begin{equation*}
    s(0,z) = c'(0,z) = 0, \qquad s'(0,z) = c(0,z) = 1.
  \end{equation*}
  We define
  \begin{align*}
    \pzce_+(x,z;\pzcq) &= c(x,z)e_+(0,z;q_+) + s(x,z)e_+'(0,z;q_+) \\
   \pzce_-(x,z;\pzcq) &= c(x,z)e_+(0,z;q_-) - s(x,z)e_+'(0,z;q_-),
  \end{align*}
  where $q_\pm$ are potentials on the half-line such that
  \begin{equation*}
    q_\pm(x) := \pzcq(\pm x), \qquad x \in \br_+.
  \end{equation*}
  Notice that the functions $\pzce_\pm(\pm x,z;\pzcq)$, $x \in \br_+$, solve the Schr\"odinger  equations \eqref{difeqr+} with $q = q_\pm$.
  By computing the boundary conditions of $\pzce_\pm(\pm x,z;\pzcq)$ at $x = 0$, we see that   
  \begin{align*}
\pzce_+(x,z;\pzcq) &=
                       e_+(x,z;q_+),\qquad \ \ x\in\br_+, \\
    \pzce_-(x,z;\pzcq) &=
e_+(-x,z;q_-),\qquad x\in\br_-.
\end{align*}
The asymptotic relations \eqref{asymjosr1} and \eqref{asymjosr2} follow. 
The analyticity statement follows from the fact that
 $s(x,\cdot)$ and $c(x,\cdot)$ are entire functions (see, for instance, \cite[Lemma 5.7]{Teschl}) as well as the 
 analyticity of  $e_+(0,\cdot;q_\pm)$ and $e'_+(0,\cdot;q_\pm)$ on $\bc_+$.  

Next, we prove the second statement, characterising the eigenvalues of $\ch_\pzcq$. If the Jost solutions $\pzce_\pm$ are proportional, the eigenfunction
exists, and so $z^2$ is the eigenvalue. Conversely, assume that $\pzce_+$ and $\pzce_-$ are linearly independent.
The limit case on each half-line (cf. \eqref{asymjos}) means that $\pzce_\pm\notin L^2(\br_\mp)$. Hence, all solutions of \eqref{difeqr} from $L^2(\br_\pm)$
are of the form $c_\pm\,\pzce_\pm$. If $z^2\in\s_d(\ch_\pzcq)$, there is a solution $\pzce\in L^2(\br)$ of \eqref{difeqr} with
\begin{equation*}
\pzce(x,z;\pzcq)=
\begin{cases} c_+\pzce_+(x,z;\pzcq),\ & x\in\br_+, \\
c_-\pzce_-(x,z;\pzcq), \ & x\in\br_-,
\end{cases}
\end{equation*}
and so $\pzce_+$ and $\pzce_-$ are proportional. A contradiction completes the proof.

The final statement follows from \cite[Theorem 28]{LatSuk}.
\end{proof}

In what follows, we shall suppress indication of $z$ dependence where appropriate.
\subsubsection*{Compactly supported potentials}
Assume that $\pzcq$ is compactly supported, \newline $\text{supp}\,\pzcq\subset [-a,a]$,\ $a>0$. Then
\begin{equation}\label{freeasym}
\pzce_-(x,\pzcq)=e^{-izx}, \qquad \pzce_-'(x,\pzcq)= -iz\,e^{- izx}, \qquad  x\leq - a.
\end{equation}
Also, there exist $A_\pm(z)$ such that
\begin{equation}\label{oppos}
\begin{split}
\pzce_+(x,\pzcq) &= A_+(z)e^{izx}+A_-(z)e^{-izx}, \\
\pzce_+'(x,\pzcq) &= iz\Bigl(A_+(z)e^{izx}-A_-(z)e^{-izx}\Bigr), \quad x\le -a.
\end{split}
\end{equation}
We can easily calculate the Wronskian. For $x\le -a$,
\begin{equation*}
\begin{split}
W(\pzce_+, \pzce_-) &=\Bigl(A_+(z)e^{izx}+A_-(z)e^{-izx}\Bigr)\Bigl(-ize^{-izx}\Bigr) \\
&-iz\Bigl(A_+(z)e^{izx}-A_-(z)e^{-izx}\Bigr)e^{-izx} =-2izA_+(z)
\end{split}
\end{equation*}
and so
\begin{equation}\label{wrocomp}
W(z,\pzcq)=W(\pzce_+, \pzce_-)=-2izA_+(z).
\end{equation}
Note that equations analogous to \eqref{freeasym}, \eqref{oppos} and \eqref{wrocomp} also hold for the opposite half-line $x \geq a$.

\subsubsection*{Shifted potentials}
Next, consider a shifted equation
\begin{equation}\label{difeqrsh}
\pzch_X[y]:=-y''+\pzcq(x-X)y=z^2y, \qquad X>0. 
\end{equation}
All its solutions are shifts of the corresponding solutions of \eqref{difeqr}. In particular, the Jost solutions satisfy
\begin{equation}\label{jostsolsh}
\pzce_\pm(x,\pzcq(\cdot-X))=e^{\pm izX}\,\pzce_\pm(x-X,\pzcq).
\end{equation}
\subsubsection*{Symmetrisation of potentials}

The following result will allow us to apply the lower bounds of Section \ref{sec:db} to even extensions of dissipative barrier potentials.
We mentioned it in the introduction, see (\ref{eq:R-vs-R+}).

\begin{proposition}\label{evenext}
Given a potential $q\in L^1(\br_+)$, let $\pzcq_e$ be its even extension on the line
\begin{equation*}
\pzcq_e(-x)=\pzcq_e(x), \quad x\in\br; \qquad \pzcq_e\vert_{ \br_+}=q.
\end{equation*}
Then $\s_d(H_q)\subset \s_d(\ch_{\pzcq_e})$, and moreover, for each $\lambda\in \s_d(H_q)$,
\begin{equation}\label{evenmult}
\nu(\lambda, H_q)\leq \nu(\lambda, \ch_{\pzcq_e}).
\end{equation}
\end{proposition}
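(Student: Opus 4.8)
The plan is to exploit the reflection symmetry of the even extension $\pzcq_e$ to express the whole-line Jost solutions $\pzce_\pm(\cdot,z;\pzcq_e)$ through a single half-line Jost solution, and then to factor the Wronskian accordingly. First I would note that, since $\pzcq_e$ is even, the map $y(x)\mapsto y(-x)$ sends solutions of $-y''+\pzcq_e y=z^2y$ to solutions; applying it to $\pzce_+(\cdot,z;\pzcq_e)$ produces a solution with asymptotics $e^{-izx}(1+o(1))$ as $x\to-\infty$, which by the uniqueness part of Proposition~\ref{line} must coincide with $\pzce_-(\cdot,z;\pzcq_e)$. (Equivalently, in the explicit construction used to prove Proposition~\ref{line} the two half-line potentials $q_\pm(x)=\pzcq_e(\pm x)$ both equal $q$, so that $\pzce_+(x,z;\pzcq_e)=e_+(x,z;q)$ for $x\ge0$ and $\pzce_-(x,z;\pzcq_e)=e_+(-x,z;q)$ for $x\le0$.) In either case,
\begin{equation*}
\pzce_-(x,z;\pzcq_e)=\pzce_+(-x,z;\pzcq_e),\qquad x\in\br,\ z\in\bc_+,
\end{equation*}
so that $\pzce_-(0,z;\pzcq_e)=\pzce_+(0,z;\pzcq_e)$ and, differentiating, $\pzce_-'(0,z;\pzcq_e)=-\pzce_+'(0,z;\pzcq_e)$.

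Substituting this into the Wronskian from Proposition~\ref{line} gives the factorisation
\begin{equation*}
W(z,\pzcq_e)=\pzce_+(0,z;\pzcq_e)\,\pzce_-'(0,z;\pzcq_e)-\pzce_-(0,z;\pzcq_e)\,\pzce_+'(0,z;\pzcq_e)=-2\,\pzce_+(0,z;\pzcq_e)\,\pzce_+'(0,z;\pzcq_e).
\end{equation*}
Since $\pzce_+(\cdot,z;\pzcq_e)=e_+(\cdot,z;q)$ on $\br_+$, the factor $\pzce_+(0,z;\pzcq_e)$ is precisely the half-line Jost function $e_+(0,z;q)$. Hence, for $z\in\bc_+$, where $\lambda=z^2\in\bc\setminus\br_+$, the basic equivalence $\lambda\in\sigma_d(H_q)\iff e_+(0,z;q)=0$ forces $W(z,\pzcq_e)=0$, i.e.\ $\lambda\in\sigma_d(\ch_{\pzcq_e})$; this proves the inclusion $\sigma_d(H_q)\subset\sigma_d(\ch_{\pzcq_e})$.

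For the multiplicity bound \eqref{evenmult} I would use that $\nu(\lambda,H_q)$ equals the order of vanishing at $z$ of the Jost function $e_+(0,\cdot;q)=\pzce_+(0,\cdot;\pzcq_e)$ (the property of $e_+$ recalled in the Introduction), while $\nu(\lambda,\ch_{\pzcq_e})$ equals the order of vanishing at $z$ of $W(\cdot,\pzcq_e)$ (Proposition~\ref{line}). By the factorisation, the latter order equals the sum of the orders of vanishing at $z$ of $\pzce_+(0,\cdot;\pzcq_e)$ and of $\pzce_+'(0,\cdot;\pzcq_e)$, hence is at least that of $\pzce_+(0,\cdot;\pzcq_e)$, which is exactly $\nu(\lambda,H_q)$.

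I do not anticipate a genuine obstacle: once the reflection identity $\pzce_-(x,z;\pzcq_e)=\pzce_+(-x,z;\pzcq_e)$ is in place the rest is bookkeeping. The only points deserving a little care are that $z\in\bc_+$ indeed yields $\lambda=z^2\notin\br_+$, and that the analytic functions involved --- in particular $W(\cdot,\pzcq_e)$, equivalently $\pzce_+'(0,\cdot;\pzcq_e)$ --- are not identically zero, so that the zero at $z$ is isolated and all the orders of vanishing are finite; both are consequences of the standard facts for $L^1$ potentials already used above, such as $e_+(0,z;q)\to1$ as $|z|\to\infty$.
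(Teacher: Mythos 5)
Your proposal is correct and follows essentially the same route as the paper: the reflection identity $\pzce_-(x,z;\pzcq_e)=\pzce_+(-x,z;\pzcq_e)$, the resulting factorisation $W(z,\pzcq_e)=-2\,e_+(0,z;q)\,e_+'(0,z;q)$, and the comparison of zero orders via Proposition \ref{line} are exactly the paper's argument. The only difference is that you spell out the multiplicity bookkeeping and the non-vanishing caveat that the paper leaves implicit in ``the result now follows from Proposition \ref{line}.''
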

\begin{proof}
It is clear from the definition, that
\begin{equation*}
\pzce_-(x,z;\pzcq_e)=\pzce_+(-x,z;\pzcq_e), \quad \pzce_-'(x,z;\pzcq_e)=-\pzce_+'(-x,z;\pzcq_e), \qquad x\in\br.
\end{equation*}
Hence, $W(z,\pzcq_e)=-2\pzce_+(0,z;\pzcq_e)\,\pzce_+'(0,z;\pzcq_e)$. 
But $\pzcq_e\vert_{\br_+}=q$, so
\begin{equation*}
\pzce_+(x,z;\pzcq_e)=e_+(x,z;q), \quad x\in\br_+; \quad W(z,\pzcq_e)=-2e_+(0,z;q)\,e_+'(0,z;q).
\end{equation*}
The result now follows from Proposition \ref{line}.
\end{proof}

\subsection{Auxillary spectral approximation results}\label{subsec:approx}
\subsubsection*{Large shifts}

The following lemma and its corollary are crucial for the proof of Theorem \ref{mainth}. A more general, but slightly less 
precise, version of this result has been proven in \cite[Lemma 4]{BogAcc} by invoking the abstract notion of limiting essential spectrum (cf. \cite{BogEss}).
In contrast to that result, it is important for us to account for algebraic multiplicities, and our proof only relies on 
basic ODE theory and complex analysis.

\begin{lemma}\label{spinc}
Let $q\in L^1(\br_+)$ and $\pzcq\in L^1(\br)$ be potentials with compact supports. For any  $X>0$, denote 
\begin{equation*}
q(x,X):=q(x)+\pzcq(x-X), \qquad x \in \br_+.
\end{equation*}
Then $q(\cdot,X) \in L^1(\br_+)$ for all $X > 0$, and 
\begin{equation}\label{product}
\lim_{X\to\infty}e_+(0,z;q(\cdot,X))=-\frac{e_+(0,z;q)\,W(z,\pzcq)}{2iz}=e_+(0,z;q)\frac{W(z,\pzcq)}{W(z,q)}
\end{equation}
uniformly on compact subsets of $\bc_+$.
\end{lemma}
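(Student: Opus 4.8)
The plan is to set up the Schr\"odinger integral equation for the Jost solution of the compound potential $q(\cdot,X)$ on $[0,\infty)$ and to track how the contribution of the distant bump $\pzcq(\cdot-X)$ decouples as $X\to\infty$. First I would record that $q(\cdot,X)\in L^1(\br_+)$ with $\|q(\cdot,X)\|_1=\|q\|_1+\|\pzcq\|_1$ for $X$ large enough that the supports are disjoint, so the Jost solution $e_+(\cdot,z;q(\cdot,X))$ exists and enjoys the standard bound \eqref{ineqjost}, uniformly in $X$. Write $b>0$ for a common bound on the supports: $\mathrm{supp}\,q\subset[0,b]$ and $\mathrm{supp}\,\pzcq\subset[-b,b]$, so that for $X>2b$ the bump sits in $[X-b,X+b]$, to the right of $\mathrm{supp}\,q$.

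The key step is to split the half-line at a point between the two supports, say at $X/2$. On $[X/2,\infty)$ the potential $q(\cdot,X)$ equals the shifted, compactly supported potential $\pzcq(\cdot-X)$, so there the Jost solution must be a linear combination of the two Jost solutions of the shifted real-line equation; using \eqref{oppos}, \eqref{jostsolsh} and the asymptotics \eqref{asymjos}, one identifies
\begin{equation*}
e_+(x,z;q(\cdot,X))=\pzce_+(x-X,z;\pzcq)=A_+(z)e^{izx}+A_-(z)e^{-iz(x-2X)},\qquad x\in[X/2,X-b],
\end{equation*}
after matching the decaying asymptotics at $+\infty$ (note $e_+(x,z;q(\cdot,X))=e_+(x-X,z;\pzcq\text{-shifted})$ already for $x\geq X+b$). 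The term carrying $e^{-iz(x-2X)}$ is $O(e^{-(\im z)(2X-x)})$, hence exponentially small on $[0,X/2]$ for $z$ in a compact subset of $\bc_+$. On the remaining interval $[0,X/2]$ the potential is just $q$, and $e_+(\cdot,z;q(\cdot,X))$ solves the same ODE as $e_+(\cdot,z;q)$ but with a different (namely, the above) boundary data at $x=X/2$. Passing to the integral equation on $[0,X/2]$ with that boundary data and letting $X\to\infty$, the exponentially small term drops out and what survives is $A_+(z)$ times the free solution $e^{izx}$ replaced by the genuine Jost solution $e_+(x,z;q)$; more precisely, $e^{izx}A_+(z)$ is, up to the $o(1)$ from the tail of $q$ beyond $X/2$ (which is empty for $X>2b$), exactly the data that produces $A_+(z)e_+(x,z;q)$. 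This yields
\begin{equation*}
\lim_{X\to\infty}e_+(0,z;q(\cdot,X))=A_+(z)\,e_+(0,z;q),
\end{equation*}
and \eqref{wrocomp} identifies $A_+(z)=W(z,\pzcq)/(-2iz)=W(z,\pzcq)/W(z,q)$ by \eqref{wronhalf}, giving \eqref{product}.

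For the uniformity on compact subsets of $\bc_+$, I would keep all estimates quantitative: the exponential smallness of the $A_-$ term is governed by $e^{-(\im z)X/2}$ and hence uniform once $\im z$ is bounded below; the Volterra/successive-approximation bounds for the integral equation on $[0,X/2]$ are uniform on compacts via \eqref{ineqjost}-type estimates; and $A_\pm(z)$ are analytic, hence bounded on compacts. The main obstacle I anticipate is the careful bookkeeping in the matching step --- specifically, justifying that the boundary data for $e_+(\cdot,z;q(\cdot,X))$ at $x=X/2$ converges (after stripping the exponentially small piece) to data proportional to the Jost data of $e_+(\cdot,z;q)$, uniformly in $z$. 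This is where one must be slightly careful that "$e_+$ of $q(\cdot,X)$ restricted to $[0,X/2]$, with its actual Cauchy data at $X/2$" really is $A_+(z)$ times "$e_+$ of $q$", and not merely asymptotically so; the cleanest route is to express both sides through the fundamental system $\{c(x,z),s(x,z)\}$ of the $q$-equation on $[0,X/2]$ and compare coefficients, as in the proof of Proposition~\ref{line}. Everything else is routine ODE theory and the standard Jost-function estimates already invoked in the paper.
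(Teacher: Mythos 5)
Your proposal is correct and follows essentially the same route as the paper's proof: split the half-line between the two supports, represent the Jost solution on the right by the shifted whole-line Jost solution of $\pzcq$ and on the left by the pair $e_\pm(\cdot,z;q)$, note that the coefficient of the second piece carries a factor $e^{2izX}$ and is therefore exponentially suppressed uniformly on compact subsets of $\bc_+$, and identify the surviving coefficient $A_+(z)=W(z,\pzcq)/(-2iz)=W(z,\pzcq)/W(z,q)$ via \eqref{wrocomp} and \eqref{wronhalf}. The only (harmless) differences are that you match exactly in the potential-free gap between the supports rather than inverting the $2\times 2$ system at $Y=X/2$ with the asymptotics \eqref{asymjos} as the paper does, and that your intermediate identity $e_+(x,z;q(\cdot,X))=\pzce_+(x-X,z;\pzcq)$ omits the factor $e^{izX}$ from \eqref{jostsolsh}, although your final exponential expression and the resulting limit are normalised correctly.
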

\begin{proof}
Assume that
\begin{equation*}
\text{supp}\,q\subset [0,b], \qquad \text{supp}\,\pzcq\subset [-a,a], \qquad a,b>0,
\end{equation*}
so that $\text{supp}\,\pzcq(\cdot-X)\subset [X-a, X+a]$. Assume also that $X$ is so large that
\begin{equation*}
b<\frac{X}2:=Y<X-a.
\end{equation*}
Then $\text{supp}\,\pzcq(\cdot-X)\subset\br_+$, and the supports of $q$ and $\pzcq(\cdot-X)$ are disjoint. For the Jost solution, we have
\begin{equation}\label{josthalf}
e_+(x,q(\cdot,X))=
\begin{cases} c_+ e_+(x,q)+c_-e_-(x,q), & 0\le x\le Y \\
\pzce_+(x,\pzcq(\cdot-X))=e^{izX}\pzce_+(x-X,\pzcq),  & x>Y,
\end{cases}
\end{equation}
for some $c_\pm=c_\pm(X,z) \in \bc$. The adjustment conditions at $Y$ yield
\begin{equation*}
\begin{split}
c_+ e_+(Y,q) + c_- e_-(Y,q) &=e^{izX}\,\pzce_+(-Y,\pzcq), \\
c_+ e_+'(Y,q) + c_- e_-'(Y,q) &=e^{izX}\,\pzce_+'(-Y,\pzcq),
\end{split}
\end{equation*}
or, in matrix form,
\begin{equation*}
\begin{bmatrix}
e_+(Y,q) & e_-(Y,q) \\
e_+'(Y,q) & e_-'(Y,q)
\end{bmatrix}
\begin{bmatrix}
c_+ \\
c_-
\end{bmatrix}=e^{izX}
\begin{bmatrix}
\pzce_+(-Y,\pzcq) \\
\pzce_+'(-Y,\pzcq)
\end{bmatrix}.
\end{equation*}
A matrix inversion yields
\begin{equation*}
\begin{bmatrix}
c_+ \\
c_-
\end{bmatrix}=\frac{e^{izX}}{W(z,q)}\,
\begin{bmatrix}
e_-'(Y,q) & -e_-(Y,q) \\
-e_+'(Y,q) & e_+(Y,q)
\end{bmatrix}
\begin{bmatrix}
\pzce_+(-Y,\pzcq) \\
\pzce_+'(-Y,\pzcq)
\end{bmatrix}.
\end{equation*}

We can now calculate the Jost function from the upper relation in \eqref{josthalf},
taking into account \eqref{asymjos} and \eqref{wronhalf}
\begin{equation*}
\begin{split}
&{} e_+(0,q(\cdot,X)) =c_+e_+(0,q)+c_-e_-(0,q) \\
&=-\frac{e^{izY}}{2iz}
\begin{bmatrix}
e_+(0,q) & e_-(0,q)
\end{bmatrix}
\begin{bmatrix}
-iz+o(1) & -1+o(1) \\
e^{izX}(-iz+o(1)) & e^{izX}(1+o(1))
\end{bmatrix}
\begin{bmatrix}
\pzce_+(-Y,\pzcq) \\
\pzce_+'(-Y,\pzcq)
\end{bmatrix} \\
&=-\frac{e^{izY}}{2iz}
\begin{bmatrix}
e_+(0,q) & e_-(0,q)
\end{bmatrix}
\begin{bmatrix}
f_+(X,\pzcq) \\
f_-(X,\pzcq)
\end{bmatrix},
\end{split}
\end{equation*}
where
\begin{equation*}
\begin{split}
f_+(X,\pzcq) &:= (-iz+o(1))\pzce_+(-Y,\pzcq)+(-1+o(1))\pzce_+'(-Y,\pzcq), \\
f_-(X,\pzcq) &:= e^{izX}\left\{(-iz+o(1))\pzce_+(-Y,\pzcq)+(1+o(1))\pzce_+'(-Y,\pzcq)\right\}, \ Y=\frac{X}2.
\end{split}
\end{equation*}

Since $Y>a$, then, by \eqref{oppos},
\begin{equation*}
\begin{split}
e^{izY}f_+(X, \pzcq) &=(-iz+o(1))\bigl(A_+ +A_-e^{izX}\bigr)+(-iz+o(1))\bigl(A_+ -A_-e^{izX}\bigr) \\
&=-2iz A_+ +o(1), \qquad X\to\infty,
\end{split}
\end{equation*}
uniformly on compact subsets of $\bc_+$. It is clear from \eqref{oppos}, that
\begin{equation*}
e^{izY}f_-(X, \pzcq)=o(1), \qquad X\to\infty,
\end{equation*}
also uniformly on compact subsets of $\bc_+$.
The relation \eqref{wrocomp} completes the proof.
\end{proof}

Before we move on, let us clarify what we shall mean by a collection of eigenvalues. When we say that there exists a collection of $N \in \bn$  eigenvalues 
$\lambda_1,...,\lambda_N$ of an operator $T$, we mean that:
 \begin{enumerate}
 \item  $\lambda_j$ is an eigenvalue of $T$ for each $j \in \{1,...,N\}$, and
 \item if $\lambda$ is repeated $\nu$ times in the collection $\lambda_1,...,\lambda_N$, then $\lambda$ is an eigenvalue of $T$ with algebraic 
multiplicity at least $\nu$.
 \end{enumerate}

An integer-valued function $\nu(\cdot,T)$ is said to be an algebraic multiplicity with respect to a linear operator $T$, if $\nu(\lambda,T)$ equals
the algebraic multiplicity of $\lambda$ in case when $\lambda\in\s_d(T)$, and $\nu(\lambda,T)=0$ otherwise.

\begin{corollary}\label{approx}
Let the potentials $q$ and $\pzcq$ be defined as above. Given $\lambda\in\bc\bsl\br_+$, put
\begin{equation}\label{multpoint}
\nu=\nu(\lambda):=\nu(\lambda, H_q)+\nu(\lambda, \ch_{\pzcq}).
\end{equation}
Then $\l\in \s_d(H_q)\cup\s_d(\ch_\pzcq)$, if and only if there exists a collection of $\nu$ eigenvalues  
$\l_X^{(1)}$, ..., $\l_X^{(\nu)}$ of $H_{q(\cdot,X)}$, $X > 0$ large enough,  such that
\begin{equation*}
\lim_{X\to\infty} \lambda_X^{(j)}=\lambda, \qquad j=1,2,\ldots,\nu.
\end{equation*}
\end{corollary}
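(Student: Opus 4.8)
The plan is to deduce Corollary \ref{approx} from Lemma \ref{spinc} by a standard argument from complex analysis, namely Hurwitz's theorem applied to the Jost functions, keeping careful track of multiplicities via the identification of zero multiplicities of $e_+(0,\cdot\,;\cdot)$ with algebraic multiplicities of eigenvalues. First I would recall that, by the spectral interpretation of the Jost function recalled in the introduction, a point $\lambda = z^2 \in \bc \bsl \br_+$ (with $z \in \bc_+$) lies in $\s_d(H_{q(\cdot,X)})$ precisely when $e_+(0,z;q(\cdot,X)) = 0$, and the algebraic multiplicity equals the order of the zero; likewise $\lambda \in \s_d(H_q)$ iff $e_+(0,z;q) = 0$ and $\lambda \in \s_d(\ch_\pzcq)$ iff $W(z,\pzcq) = 0$, again with multiplicities matching orders of zeros (Proposition \ref{line} for the line operator, and for $H_q$ the standard fact). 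Therefore the function
\begin{equation*}
\Psi(z) := e_+(0,z;q)\,\frac{W(z,\pzcq)}{W(z,q)} = -\frac{e_+(0,z;q)\,W(z,\pzcq)}{2iz}
\end{equation*}
is analytic on $\bc_+$ (the denominator $W(z,q) = -2iz$ never vanishes there), and the order of its zero at $z_0 \in \bc_+$ equals exactly $\nu(z_0^2, H_q) + \nu(z_0^2, \ch_\pzcq) = \nu(\lambda)$ with $\lambda = z_0^2$. Lemma \ref{spinc} says that $e_+(0,\cdot\,;q(\cdot,X)) \to \Psi$ uniformly on compact subsets of $\bc_+$ as $X \to \infty$.

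Next I would run the two directions of the equivalence. For the ``only if'' direction, suppose $\lambda = z_0^2 \in \s_d(H_q) \cup \s_d(\ch_\pzcq)$, so $\Psi$ has a zero of order exactly $\nu = \nu(\lambda) \geq 1$ at $z_0$. Since $e_+(0,\cdot\,;q)$ is not identically zero (it is asymptotically $e^{ixz}$-normalised, equivalently $e_+(0,z;q) \to 1$ as $|z| \to \infty$ in a suitable sense), $\Psi \not\equiv 0$, so its zeros are isolated; pick a closed disk $\ovl{D}$ around $z_0$ in $\bc_+$ on whose boundary $\Psi$ does not vanish, containing no other zero of $\Psi$. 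By uniform convergence on $\partial D$, the usual Hurwitz/Rouché argument gives that for all large $X$, $e_+(0,\cdot\,;q(\cdot,X))$ has exactly $\nu$ zeros (with multiplicity) in $D$, call them $z_X^{(1)},\dots,z_X^{(\nu)}$, and, shrinking $D$, these all converge to $z_0$ as $X \to \infty$. Setting $\lambda_X^{(j)} := (z_X^{(j)})^2$ yields the required collection of $\nu$ eigenvalues of $H_{q(\cdot,X)}$ converging to $\lambda$ (the multiplicity bookkeeping in the definition of ``collection of eigenvalues'' is exactly matched by the multiplicity count of the zeros, via the algebraic-multiplicity $=$ zero-order identification). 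For the ``if'' direction, suppose such a collection $\lambda_X^{(1)},\dots,\lambda_X^{(\nu)} \to \lambda$ exists for large $X$; write $\lambda_X^{(j)} = (z_X^{(j)})^2$ with $z_X^{(j)} \to z_0$ where $z_0^2 = \lambda$, $z_0 \in \bc_+$. Each $z_X^{(j)}$ is a zero of $e_+(0,\cdot\,;q(\cdot,X))$, and by the multiplicity convention these zeros are counted correctly, so $e_+(0,\cdot\,;q(\cdot,X))$ has at least $\nu$ zeros (with multiplicity) near $z_0$ for large $X$; by Hurwitz applied to the uniformly convergent sequence, the limit $\Psi$ has a zero of order at least $\nu$ at $z_0$. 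But the order of that zero is exactly $\nu(\lambda,H_q) + \nu(\lambda,\ch_\pzcq) = \nu(\lambda)$, and comparing with the definition $\nu = \nu(\lambda)$ forces $\nu(\lambda,H_q) + \nu(\lambda,\ch_\pzcq) \geq 1$, i.e.\ $\lambda \in \s_d(H_q) \cup \s_d(\ch_\pzcq)$.

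I expect the main obstacle to be the careful handling of multiplicities, in two respects. First, one must be sure that the limiting function $\Psi$ is not identically zero, so that Hurwitz's theorem applies; this follows because $e_+(0,\cdot\,;q)$ tends to $1$ at infinity along the imaginary axis (hence is nonzero), and $W(\cdot,\pzcq)$ is likewise nontrivial, but it is worth stating. Second, one must reconcile the two slightly different ways multiplicity appears: the order of a zero of the relevant analytic function on one side, and the ``collection of $\nu$ eigenvalues counted with algebraic multiplicity'' convention on the other. The bridge is precisely the cited fact that the algebraic multiplicity of $z^2$ as an eigenvalue equals the order of $z$ as a zero of the Jost function (resp.\ of the Wronskian, by Proposition \ref{line}), together with the additivity of zero-orders under the factorisation of $\Psi$. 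A minor technical point worth a line is that squaring is a local biholomorphism near $z_0 \in \bc_+$ (since $z_0 \neq 0$), so convergence and multiplicity counts transfer faithfully between the $z$-plane and the $\lambda$-plane. Everything else is a routine packaging of Lemma \ref{spinc} with Hurwitz's theorem.
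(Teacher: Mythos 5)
Your proposal is correct and follows essentially the same route as the paper: the paper's proof also identifies $\lambda=z^2\in\s_d(H_q)\cup\s_d(\ch_\pzcq)$ with a zero of the limit function in \eqref{product} of multiplicity $\nu(\lambda)$ (via Proposition \ref{line} and the analogous property of $e_+(0,\cdot;q)$), and then concludes by Lemma \ref{spinc} and Hurwitz's theorem. Your write-up simply makes explicit the details (nontriviality of the limit, the Rouch\'e-type counting, and the local biholomorphism $z\mapsto z^2$) that the paper leaves implicit.
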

\begin{proof}
 By Proposition \ref{line} (and similar property of the Jost function $e_+(0,\cdot;q)$), $\l=z^2\in\s_d(H_q)\cup\s_d(\ch_\pzcq)$ if and only if
$z\in\bc_+$ is a root of the right-hand side \eqref{product} with multiplicity equal to $\nu(\l)$ \eqref{multpoint}. The rest is a direct consequence 
of Lemma \ref{spinc} and Hurwitz's theorem.
\end{proof}
In particular, note that if $\nu(\mu, H_q)=\nu(\mu, \ch_\pzcq)=0$, then $\mu$ is separated from the discrete spectra $\s_d(H_{q(\cdot,X)})$ for all large enough $X$.

\subsubsection*{Truncation}
Given a potential $q\in L^1(\br_+)$, we define its truncation at the level $X>0$ as
\begin{equation}\label{trun}
q_X(x):=
\begin{cases} q(x),\ & 0\le x\le X; \\
0, \ & x>X.
\end{cases}
\end{equation}
Let $(X_n)_{n\in\bn}$ be a sequence of positive numbers such that $\lim_{n \to \infty} X_n = \infty$. Put 
\begin{equation*} 
q_n:=q_{X_n}, \qquad  H_n:= H_{q_n}. 
\end{equation*}

\begin{lemma}\label{trunc}
In the above notation, the limit relation
\begin{equation}\label{limtrun}
\lim_{X\to\infty} e_+(0,z;q_X)=e_+(0,z;q)
\end{equation}
holds uniformly on compact subsets of $\bc_+$. In particular, $\l\in\bc\bsl\br_+$ is an eigenvalue of $H=H_q$ of algebraic multiplicity $\nu$
if and only if there exists a collection of eigenvalues $\l_n^{(1)},...,\l_n^{(\nu)}$  of $H_n$ such that
\begin{equation*}
\lim_{n\to\infty} \l_n^{(j)}=\l, \qquad j=1,2,\ldots,\nu. 
\end{equation*}
  
\end{lemma}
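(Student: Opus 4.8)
The plan is to prove the limit relation \eqref{limtrun} first, and then read off the assertion about eigenvalues by the same Hurwitz-theorem argument already used for Corollary \ref{approx}. For \eqref{limtrun} the key point is that truncation converges in $L^1$: since $q_X-q=-q\,\chi_{(X,\infty)}$, we have $\|q_X-q\|_1=\int_X^\infty|q(t)|\,dt\to0$ as $X\to\infty$, while $\|q_X\|_1\le\|q\|_1$ for every $X$. So it is enough to establish continuous dependence of the Jost function on the potential in the $L^1$-norm, uniformly on compact subsets of $\bc_+$, and this I would extract from the successive-approximation scheme set up in the proof of Proposition \ref{jost} (case $a\equiv1$).

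Concretely, I would fix a compact $K\subset\bc_+$ and set $c:=\min_{z\in K}|z|>0$. Write $f(x,z;p):=e^{-ixz}e_+(x,z;p)-1$, which solves the Volterra equation \eqref{newvar} with $q$ replaced by $p$ and whose kernel obeys $|k(u,z)|\le\min(u,|z|^{-1})\le c^{-1}$ on $K$. The a priori bound \eqref{jostbound} gives $|f(x,z;p)|\le e^{\|p\|_1/|z|}-1$, whence $|f(x,z;q)|,|f(x,z;q_X)|\le M:=e^{\|q\|_1/c}-1$ on $\br_+\times K$. Subtracting the Volterra equations for $q_X$ and $q$, using $q_Xf_X-qf=q_X(f_X-f)+(q_X-q)f$ together with $|q_X|\le|q|$, and passing to $\phi_X(x):=\sup_{z\in K}|f(x,z;q_X)-f(x,z;q)|$, one obtains
\[
\phi_X(x)\le\varepsilon_X+\frac1c\int_x^\infty|q(t)|\,\phi_X(t)\,dt,\qquad \varepsilon_X:=\tfrac{1+M}{c}\,\|q_X-q\|_1 .
\]
Since $\phi_X\le2M$, iterating this inequality $n$ times and letting $n\to\infty$ (the tail term is $\le2M(\|q\|_1/c)^n/n!\to0$) yields $\phi_X(x)\le\varepsilon_X\,e^{\|q\|_1/c}$ for all $x\ge0$. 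As $\varepsilon_X\to0$, evaluating at $x=0$ gives $e_+(0,z;q_X)=1+f(0,z;q_X)\to1+f(0,z;q)=e_+(0,z;q)$ uniformly on $K$; since $K$ was an arbitrary compact subset of $\bc_+$, \eqref{limtrun} follows.

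For the second assertion, recall that for any $p\in L^1(\br_+)$ a point $\l\in\bc\bsl\br_+$ is an eigenvalue of $H_p$ precisely when the unique $z_0\in\bc_+$ with $z_0^2=\l$ is a zero of $e_+(0,\cdot;p)$, the algebraic multiplicity of $\l$ equalling the order of that zero. The limit function $e_+(0,\cdot;q)$ is analytic on $\bc_+$ and not identically zero, since $|e_+(0,z;q)-1|\to0$ as $|z|\to\infty$ by \eqref{ineqjost}. If $\l$ is an eigenvalue of $H_q$ of multiplicity $\nu$, then $z_0\in\bc_+\bsl\{0\}$ is a zero of $e_+(0,\cdot;q)$ of order $\nu$; choosing a small disc $D\ni z_0$ with $\overline{D}\subset\bc_+\bsl\{0\}$, containing no other zero of $e_+(0,\cdot;q)$ and on which $z\mapsto z^2$ is injective, the uniform convergence just proved and Hurwitz's theorem show that $e_+(0,\cdot;q_n)$ has exactly $\nu$ zeros in $D$ for all large $n$, all converging to $z_0$; squaring produces a collection of $\nu$ eigenvalues $\l_n^{(j)}$ of $H_n$ with $\l_n^{(j)}\to\l$. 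Conversely, given such a collection, the square roots of the $\l_n^{(j)}$ lying in $\bc_+$ form a collection of $\nu$ zeros of $e_+(0,\cdot;q_n)$ tending to $z_0$, so Hurwitz's theorem applied on arbitrarily small discs about $z_0$ forces $e_+(0,\cdot;q)$ to vanish at $z_0$ to order at least $\nu$, i.e.\ $\l\in\sigma_d(H_q)$. This is exactly the argument of Corollary \ref{approx}.

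All the content of the lemma is in \eqref{limtrun}; no single step is a genuine obstacle, but the point that requires care is obtaining the convergence $e_+(0,\cdot;q_X)\to e_+(0,\cdot;q)$ with uniformity in $z$ on compacta (and, in the intermediate estimate, uniformity in $x\in\br_+$), which is handled by the Grönwall-type iteration on $\phi_X$ above. Once \eqref{limtrun} is secured, the spectral statement is a routine application of Hurwitz's theorem together with the fact that $z\mapsto z^2$ is a biholomorphism of $\bc_+$ onto $\bc\bsl\br_+$.
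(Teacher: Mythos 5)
Your proof is correct, but it reaches \eqref{limtrun} by a genuinely different route than the paper. The paper exploits the specific structure of truncation: on $[0,X)$ the truncated and original equations coincide, so $e_+(\cdot,z;q_X)$ is written as $c_+e_+(\cdot,z;q)+c_-e_-(\cdot,z;q)$ there and as $e^{izx}$ beyond $X$; matching value and derivative at $x=X$, inverting the $2\times2$ system via the Wronskian \eqref{wronhalf}, and inserting the asymptotics \eqref{asymjos} of $e_\pm(X,z;q)$, $e_\pm'(X,z;q)$ as $X\to\infty$ gives an explicit formula for $e_+(0,z;q_X)$ whose limit is visibly $e_+(0,z;q)$, uniformly on compacta. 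You instead prove an $L^1$-stability estimate for the Jost function: starting from the Volterra equation \eqref{newvar} (the same machinery as Proposition \ref{jost} with $a\equiv1$), you bound the difference of the two solutions by a Gr\"onwall-type iteration, obtaining $|e_+(0,z;q_X)-e_+(0,z;q)|\le C_K\|q_X-q\|_1$ on a compact $K\subset\bc_+$, and then use $\|q_X-q\|_1=\int_X^\infty|q|\to0$. Your estimates check out (the kernel bound $|k(u,z)|\le\min(u,|z|^{-1})$, the a priori bound $M=e^{\|q\|_1/c}-1$ valid for both potentials since $\|q_X\|_1\le\|q\|_1$, and the iterated-kernel remainder $2M(\|q\|_1/c)^n/n!\to0$), so the argument is complete. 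What your approach buys is generality and a quantitative rate: it shows the Jost function depends continuously on the potential in $L^1$, for arbitrary perturbations rather than just truncations; what the paper's computation buys is brevity and no need for any Gr\"onwall bookkeeping. The deduction of the spectral statement is the same in both: identification of algebraic multiplicities with zero orders of $e_+(0,\cdot;\cdot)$ plus Hurwitz's theorem (the paper leaves this step as ``clear''; your spelled-out two-direction argument, including the non-vanishing of the limit via \eqref{ineqjost}, is exactly what is meant).
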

\begin{proof}
The argument is similar to one above. We have
\begin{equation*}
e_+(x,q_X)=
\begin{cases} c_+e_+(x,q)+c_-e_-(x,q),\ & 0\le x< X; \\
e^{izx}, \ & x\ge X,
\end{cases}
\end{equation*}
$c_\pm=c_\pm(X,z)$. The adjustment conditions at $X$ yield
\begin{equation*}
\begin{split}
c_+e_+(X,q)+c_-e_-(X,q) &= e^{izX}, \\
c_+e_+'(X,q)+c_-e_-'(X,q) &=iz\,e^{izX},
\end{split}
\end{equation*}
or in matrix form
\begin{equation*}
\begin{bmatrix}
e_+(X,q) & e_-(X,q) \\
e_+'(X,q) & e_-'(X,q)
\end{bmatrix}
\begin{bmatrix}
c_+ \\
c_-
\end{bmatrix}=e^{izX}\begin{bmatrix}
1 \\ iz
\end{bmatrix}
\end{equation*}

The matrix inversion gives
\begin{equation*}
\begin{bmatrix}
c_+ \\
c_-
\end{bmatrix}=-\frac{e^{izX}}{2iz}
\begin{bmatrix}
e_-'(X,q) & -e_-(X,q) \\
-e_+'(X,q) & e_+(X,q)
\end{bmatrix}
\begin{bmatrix}
1 \\ iz
\end{bmatrix},
\end{equation*}
and so
\begin{equation*}
\begin{split}
c_+(X,z) &=-\frac{e^{izX}}{2iz}\Bigl[e_-'(X,q)-iz e_-(X,q)\Bigr], \\
c_-(X,z) &=-\frac{e^{izX}}{2iz}\Bigl[-e_+'(X,q)+iz e_+(X,q)\Bigr].
\end{split}
\end{equation*}
Finally,
\begin{equation*}
\begin{split}
&{} e_+(0,q_X)= \\
&-\frac{e^{izX}}{2iz}\left\{\Bigl[e_-'(X,q)-iz e_-(X,q)\Bigr]e_+(0,q)+\Bigl[-e_+'(X,q)+iz e_+(X,q)\Bigr]e_-(0,q)\right\},
\end{split}
\end{equation*}
and \eqref{limtrun} follows from the asymptotic relations \eqref{asymjos}.

The second statement is clear thanks to Hurwitz's theorem.
\end{proof}

\subsection{Main result}\label{subsec:main}

We are in a position now to prove the main result of the section. 

\begin{theorem}\label{mainth}
There exists a potential $q_\infty\in L^1(\br_+)$ with infinite Jensen sum.
\end{theorem}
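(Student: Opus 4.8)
The plan is to construct $q_\infty$ as an infinite superposition of far-separated, symmetrised dissipative barrier potentials, and to derive the divergence of $J(H_{q_\infty})$ by combining B\"ogli's spectral approximation idea (available to us in the form of Corollary~\ref{approx}) with the quantitative lower bound of Theorem~\ref{litrbelow}$(i)$, in which the explicit constant in \eqref{bigr} is essential.

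First I would fix sequences $\gamma_n\searrow0$ and $R_n\nearrow\infty$ subject to three requirements: $(a)$ $R_n\ge 600\bigl(\gamma_n^{3/4}+\gamma_n^{-3/4}\bigr)$, so that Theorem~\ref{litrbelow} applies to $L_{\gamma_n,R_n}$; $(b)$ $\sum_n\gamma_n R_n<\infty$, which will yield $q_\infty\in L^1(\br_+)$; and $(c)$ $\sum_n\gamma_n R_n\log R_n=\infty$. A concrete admissible choice is $\gamma_n R_n=n^{-2}$ and $R_n=e^n$ (discarding the finitely many indices for which $(a)$ fails). Put $\pzcq^{(n)}:=i\gamma_n\chi_{[-R_n,R_n]}\in L^1(\br)$. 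By Proposition~\ref{evenext}, $\s_d(L_{\gamma_n,R_n})\subset\s_d(\ch_{\pzcq^{(n)}})$ counting multiplicities, and by Corollary~\ref{compsup} together with \eqref{jenvslt} and \eqref{belowlts0},
\[
  \infty>J(L_{\gamma_n,R_n})\ge\tfrac12\,S_0(L_{\gamma_n,R_n})\ge\frac{\gamma_n R_n\log R_n}{32\pi}.
\]
Hence I may choose a \emph{finite} sub-multiset $\Lambda_n\subset\s_d(L_{\gamma_n,R_n})\subset\s_d(\ch_{\pzcq^{(n)}})$, and a radius $r_n>0$ with $\overline{B(\lambda,r_n)}\subset\bc\bsl\br_+$ for every $\lambda\in\Lambda_n$, such that $c_n:=\sum_{\lambda\in\Lambda_n}\im\sqp(\lambda)\ge\tfrac12 J(L_{\gamma_n,R_n})$; then $\sum_n c_n=\infty$ by $(c)$.

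Next I would build $q_\infty:=\sum_{n\ge1}\pzcq^{(n)}(\cdot-X_n)$ by selecting the shifts $X_n\nearrow\infty$ inductively so large that the intervals $[X_n-R_n,X_n+R_n]$ are pairwise disjoint and contained in $\br_+$; then $\|q_\infty\|_1=\sum_n2\gamma_n R_n<\infty$, so $q_\infty\in L^1(\br_+)$. Write $q_\infty^{(N)}:=\sum_{n=1}^N\pzcq^{(n)}(\cdot-X_n)$, a compactly supported potential on $\br_+$, which is precisely the truncation of $q_\infty$ beyond $X_N+R_N$. At stage $N$ I would apply Corollary~\ref{approx} with $q=q_\infty^{(N-1)}$ and $\pzcq=\pzcq^{(N)}$: for $X_N$ large enough, the eigenvalues of $H_{q_\infty^{(N)}}$ lying near any $\mu\in\bc\bsl\br_+$ number at least $\nu(\mu,H_{q_\infty^{(N-1)}})+\nu(\mu,\ch_{\pzcq^{(N)}})$, with the approximation tolerance at our disposal. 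Choosing $X_N$ so that this tolerance is summably small, an induction then shows that for every $N$ and every target $\lambda\in\Lambda_n$ with $n\le N$, the operator $H_{q_\infty^{(N)}}$ carries, in a small ball about $\lambda$, at least $\sum_{k\le N,\ \lambda\in\Lambda_k}(\text{multiplicity of }\lambda\text{ in }\Lambda_k)$ eigenvalues, counting algebraic multiplicity. The crucial point is that the multiplicities in Corollary~\ref{approx} \emph{add}, so demands placed on the same point at different stages accumulate rather than merely persist.

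Finally I would let $N\to\infty$. By Lemma~\ref{trunc}, $e_+(0,\cdot\,;q_\infty^{(N)})\to e_+(0,\cdot\,;q_\infty)$ uniformly on compact subsets of $\bc_+$, so Hurwitz's theorem (the argument principle on a small circle around $\sqp(\lambda)$, with radii chosen to avoid zeros) transfers the zero counts: $e_+(0,\cdot\,;q_\infty)$, hence $H_{q_\infty}$, has at least $\sum_{k:\ \lambda\in\Lambda_k}(\text{multiplicity in }\Lambda_k)$ eigenvalues near each target $\lambda$, counting algebraic multiplicity. Since $\im\sqp(\cdot)$ is continuous and strictly positive near each $\lambda\in\bc\bsl\br_+$, shrinking the $r_n$ at the outset so that $\im\sqp(\cdot)\ge\tfrac12\im\sqp(\lambda)$ on $\overline{B(\lambda,r_n)}$ and summing over target points gives
\[
  J(H_{q_\infty})\ \ge\ \tfrac12\sum_n\sum_{\lambda\in\Lambda_n}(\text{multiplicity in }\Lambda_n)\,\im\sqp(\lambda)\ =\ \tfrac12\sum_n c_n\ =\ \infty,
\]
which is the assertion. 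The step I expect to be the main obstacle is the bookkeeping in the inductive construction: one must propagate algebraic multiplicities through each application of Corollary~\ref{approx}, keep the geometrically shrinking tolerances from letting the neighbourhoods of distinct (possibly clustering) targets interfere or the targets drift onto $\br_+$, and verify that the finite Jensen-type contributions $c_n$ genuinely add up in the limit; the analytic ingredients---the approximation lemmas of Section~\ref{subsec:approx} and the explicit lower bound \eqref{belowlts0}---are already in place.
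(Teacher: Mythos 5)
Your proposal is correct and follows essentially the same route as the paper's proof: far-separated shifted symmetrised dissipative barriers with $\sum_n\gamma_nR_n<\infty$ but $\sum_n\gamma_nR_n\log R_n=\infty$, the lower bound of Theorem \ref{litrbelow}$(i)$ transferred to the line operators via Proposition \ref{evenext}, inductive use of Corollary \ref{approx} with summably small (relative) tolerances so each limit eigenvalue retains at least half of $\im\sqrt{\lambda}$, and Lemma \ref{trunc} plus Hurwitz to pass to $H_{q_\infty}$. The bookkeeping you flag as the remaining obstacle is exactly what the paper carries out via the tolerance choices \eqref{app1}--\eqref{app2} and the resulting bound \eqref{below}, so no new idea is missing.
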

\begin{proof}
Let $(\gamma_n)_{n \in \bn},(R_n)_{n \in \bn},(X_n)_{n \in \bn} \subset \br_+$, to be further specified. Define a sequence of Schr\"odinger
operators on the line
\begin{equation}\label{evendb}
\cl_n y:=-y''+\pzcl_n y, \qquad \pzcl_n(x):=i\gamma_n\chi_{[-R_n, R_n]}(x)\in L^1(\br), \quad n\in\bn.
\end{equation}

Let $(N_n)_{n \in \bn_0}$ be defined such that $N_0 = 0$ and, for $n \geq 1$, $N_n - N_{n-1}$ equals the number of eigenvalues of $\cl_n$,
counting algebraic multiplicity. We place all the eigenvalues $(\l_j)_{j\in\bn}$ of all operators $\cl_n$ in a single sequence in such a way that
\begin{equation*}
\{\l_{N_{n-1}+1}, \ldots, \l_{N_n}\}=\sigma_d(\cl_n), \qquad n\in\bn.
\end{equation*}

Define consecutively a sequence of potentials
\begin{equation*}
q_n(x):=q_{n-1}(x)+i\g_n\chi_{[X_n, X_n+2R_n]}(x)=q_{n-1}(x)+\pzcl_n(x-X_n-R_n), \quad n\in\bn,
\end{equation*}
$q_0\equiv0$, or, in other words,
\begin{equation}\label{partsum}
q_n(x)=\sum_{k=1}^n i\g_k\chi_{[X_k, X_k+2R_k]}(x).
\end{equation}
We assume that $X_{k+1}>X_k+2R_k$, so the intervals $[X_k, X_k+2R_k]$, $k\in\bn$, are disjoint.

Let $M_n$ denote the cardinality of the discrete spectrum $\s_d(H_{q_n})$, counting algebraic multiplicity
\begin{equation*}
\sigma_d(H_{q_n})=\{\lambda_{j,n}\}_{j=1}^{M_n}.
\end{equation*}
In view of Corollary \ref{approx}, we see that for large enough $X_n$, 
\begin{equation*}
M_{n-1}+N_n-N_{n-1}\le M_n, \qquad N_n-N_{n-1}\le M_n-M_{n-1},
\end{equation*}
and, as $M_0=N_0=0$, it follows $N_n\le M_n$ for all $n\in\bn$.

By Corollary \ref{approx}, for each $n \in \bn$, we can set $X_n$ large enough such that the collection of eigenvalues 
$\lambda_{j,n}$, $j = 1,...,N_n$, of $H_{q_n}$  (note that $N_n\le M_n$) satisfy
\begin{equation}\label{app1}
\begin{split}
|\l_{j,n}-\l_j| &+|\im\sqrt{\l_{j,n}}-\im\sqrt{\l_{j}}| \le \frac3{(\pi n)^2}\,\im\sqrt{\l_{j}}, \\ 
j &=N_{n-1}+1,\ldots, N_n, \quad n\in\bn,
\end{split}
\end{equation}
and
\begin{equation}\label{app2}
\begin{split}
|\l_{j,n}-\l_{j,n-1}| &+|\im\sqrt{\l_{j,n}}-\im\sqrt{\l_{j,n-1}}| \le \frac3{(\pi n)^2}\,\im\sqrt{\l_{j}}, \\ 
j &=1,\ldots, N_{n-1}, \quad n=2,3,\ldots.
\end{split}
\end{equation}

For each fixed $j\in\bn$, $\lambda_{j,n}$ exists for all $n \geq m$, where $m \in \bn$ is such that $\lambda_j \in \sigma_d(\cl_m)$.
The sequence $(\l_{j,n})_{n\geq m}$ is Cauchy, so there exists
\begin{equation*}
\mu_j:=\lim_{n\to\infty} \l_{j,n}.
\end{equation*}
Next, putting $\l_{j,m-1}:=\l_j$, we have  for any $k \geq m+1$
\begin{equation*}
\sum_{n=m}^k \Bigl(\im\sqrt{\l_{j,n}}-\im\sqrt{\l_{j,n-1}}\Bigr)=\im\sqrt{\l_{j,k}}-\im\sqrt{\l_{j}},
\end{equation*}
so
\begin{equation*}
\begin{split}
&{}\bigl|\im\sqrt{\l_{j,k}}-\im\sqrt{\l_{j}}\bigr| \le \sum_{n=m}^k \bigl|\im\sqrt{\l_{j,n}}-\im\sqrt{\l_{j,n-1}}\bigr| \\
&=\bigl|\im\sqrt{\l_{j,m}}-\im\sqrt{\l_{j}}\bigr|+\sum_{n=m+1}^k \bigl|\im\sqrt{\l_{j,n}}-\im\sqrt{\l_{j,n-1}}\bigr| \\
&\le \frac{3\,\im\sqrt{\l_{j}}}{\pi^2}\,\sum_{n=1}^\infty \frac1{n^2}=\frac12\,\im\sqrt{\l_{j}},
\end{split}
\end{equation*}
whence it follows, as $k\to\infty$, that
\begin{equation}\label{below}
\im\sqrt{\mu_j}\ge\frac12\,\im\sqrt{\l_j}, \qquad j\in\bn,
\end{equation}
and in particular, $\mu_j\in\bc\bsl\br_+$.

Set
\begin{equation}\label{eq:gam_n-R_n}
\begin{split}
\gamma_{n} &= \frac{1}{(n \log^2(n+2))^4}<1,  \\ 
R_{n} &= 1200\,\gamma_{n}^{-3/4}=1200\,(n\log^2(n+2))^3, \quad n\in\bn.
\end{split} 
\end{equation}
Define a potential on $\br_+$
\begin{equation}
q_\infty:=  \sum_{n=1}^\infty i \gamma_n \chi_{[X_n,X_n+2R_n]}. 
\end{equation}
Then,
\begin{equation*}
     \| q_\infty\|_1 = 2\sum_{n=1}^\infty \gamma_n R_n = 2400 \sum_{n=1}^\infty \frac{1}{n \log^2(n+2)} < \infty,
\end{equation*}
so $q_\infty \in L^1(\br_+)$.

The partial sums \eqref{partsum} can be viewed as truncations of $q_\infty$ at the level $X_n+2R_n$. Lemma \ref{trunc} implies that
for each $j \in \bn$, $\mu_j$ is an eigenvalue of $H_{q_\infty}$ with algebraic multiplicity greater than or equal to the
number of times it appears in the sequence $(\mu_k)_{k \in \bn}$. It follows that
\begin{equation*}
J(H_{q_\infty}) \geq \sum_{j=1}^\infty \im \sqrt{\mu_j} \geq \frac{1}{2}\sum_{j=1}^\infty \im \sqrt{\lambda_j} = 
\frac{1}{2} \sum_{n=1}^\infty J(\cl_n).
\end{equation*}

Recall that $L_n:=L_{\gamma_n,R_n}$ is defined in \eqref{eq:db-defn-intro} as the Schr\"odinger operator on $L^2(\br_+)$ with potential 
$i\gamma_n\chi_{[0,R_n]}$. By Proposition \ref{evenext}, any eigenvalue of $L_n$ is also an eigenvalue of $\cl_n$, and \eqref{evenmult} holds. 
Hence, employing the left inequality in Proposition \ref{prop:two-sided}, we have
\begin{equation}
\label{eq:J-LgR-lower}
J(\cl_n) \geq J(L_n) \geq \frac{1}{32 \pi} \gamma_n R_n \log R_n, \qquad R_n \geq 600(\gamma_n^{3/4} + \gamma_n^{-3/4}).
\end{equation}
The latter inequality is true for all $n\in\bn$ due to the choice of $R_n$ \eqref{eq:gam_n-R_n} and $\gamma_n<1$. Consequently,
\begin{equation}\label{eq:J-Hq-lowerbound}
J(H_{q_\infty}) \geq \frac{1}{64 \pi} \sum_{n=1}^\infty \gamma_n R_n \log R_n = \frac{600}{32\pi} \sum_{n=1}^\infty \frac{\log R_n}{n \log^2(n+2)}.
\end{equation}

 Since $\log R_n \sim 3 \log n$ as $n \to \infty$, the sum on the right hand side of (\ref{eq:J-Hq-lowerbound}) diverges. 
We conclude that the Jensen sum $J(H_{q_\infty})=\infty$, completing the proof.
\end{proof}

\bibliographystyle{plain}
\bibliography{Citations.bib}

\begin{thebibliography}{10}

\bibitem{Davies1}
A.~A. Abramov, A.~Aslanyan, and E.~B. Davies.
\newblock Bounds on complex eigenvalues and resonances.
\newblock {\em J. Phys. A}, 34(1):57--72, 2001.

\bibitem{BogAcc}
S.~B\"{o}gli.
\newblock Schr\"{o}dinger operator with non-zero accumulation points of complex
  eigenvalues.
\newblock {\em Comm. Math. Phys.}, 352(2):629--639, 2017.

\bibitem{BogEss}
S.~B\"{o}gli.
\newblock Local convergence of spectra and pseudospectra.
\newblock {\em J. Spectr. Theory}, 8(3):1051--1098, 2018.

\bibitem{BogNew}
S.~B\"{o}gli.
\newblock Improved {L}ieb-{T}hirring type inequalities for non-selfadjoint
  {S}chr\"{o}dinger operators.
\newblock {\em arXiv:2111.03938}, 2021.

\bibitem{BogCuen}
S.~B{\"o}gli and J.-C. Cuenin.
\newblock Counterexample to the {L}aptev--{S}afronov conjecture.
\newblock {\em arXiv:2109.06135}, 2021.

\bibitem{BogStam}
S.~B\"{o}gli and F.~\v{S}tampach.
\newblock On {L}ieb--{T}hirring inequalities for one-dimensional
  non-self-adjoint {J}acobi and {S}chr\"{o}dinger operators.
\newblock {\em J. Spectr. Theory}, 11(3):1391--1413, 2021.

\bibitem{BGK1}
A.~Borichev, L.~Golinskii, and S.~Kupin.
\newblock A {B}laschke-type condition and its application to complex {J}acobi
  matrices.
\newblock {\em Bull. Lond. Math. Soc.}, 41(1):117--123, 2009.

\bibitem{BGK2}
A.~Borichev, L.~Golinskii, and S.~Kupin.
\newblock On zeros of analytic functions satisfying non-radial growth
  conditions.
\newblock {\em Rev. Mat. Iberoam.}, 34(3):1153--1176, 2018.

\bibitem{Graphene}
P.~Briet, J.-C. Cuenin, L.~Golinskii, and S.~Kupin.
\newblock Lieb--{T}hirring inequalities for an effective {H}amiltonian of
  bilayer graphene.
\newblock {\em J. Spectr. Theory}, 11(3):1145--1178, 2021.

\bibitem{CuenSparse}
J.-C. Cuenin.
\newblock {S}chr\"{o}dinger operators with complex sparse potentials.
\newblock {\em arXiv:2102.12706}, 2021.

\bibitem{Davies2}
E.~B. Davies and J.~Nath.
\newblock Schr\"{o}dinger operators with slowly decaying potentials.
\newblock {\em J. Comput. Appl. Math.}, 148(1):1--28, 2002.

\bibitem{DHK}
M.~Demuth, M.~Hansmann, and G.~Katriel.
\newblock On the discrete spectrum of non-selfadjoint operators.
\newblock {\em J. Funct. Anal.}, 257(9):2742--2759, 2009.

\bibitem{DHK2}
M.~Demuth, M.~Hansmann, and G.~Katriel.
\newblock Eigenvalues of non-selfadjoint operators: a comparison of two
  approaches.
\newblock In {\em Mathematical physics, spectral theory and stochastic
  analysis}, volume 232 of {\em Oper. Theory Adv. Appl.}, pages 107--163.
  Birkh\"{a}user/Springer Basel AG, Basel, 2013.

\bibitem{Dubu1}
C.~Dubuisson.
\newblock On quantitative bounds on eigenvalues of a complex perturbation of a
  {D}irac operator.
\newblock {\em Integr. Equ. Oper. Theory}, 78(2):249--269, 2014.

\bibitem{Dubu2}
C.~Dubuisson.
\newblock Note on {L}ieb-{T}hirring type inequalities for a complex
  perturbation of fractional {L}aplacian.
\newblock {\em Jour. of Math. Phys., Anal., Geom.}, 11(3):245--266, 301, 304,
  2015.

\bibitem{FrankPrivate}
R.~Frank.
\newblock Private communication.

\bibitem{Frank1}
R.~Frank.
\newblock Eigenvalue bounds for {S}chr\"{o}dinger operators with complex
  potentials.
\newblock {\em Bull. Lond. Math. Soc.}, 43(4):745--750, 2011.

\bibitem{Frank3}
R.~Frank.
\newblock Eigenvalue bounds for {S}chr\"{o}dinger operators with complex
  potentials. {III}.
\newblock {\em Trans. Amer. Math. Soc.}, 370(1):219--240, 2018.

\bibitem{FraLapLieb}
R.~Frank, A.~Laptev, E.~Lieb, and R.~Seiringer.
\newblock Lieb-{T}hirring inequalities for {S}chr\"{o}dinger operators with
  complex-valued potentials.
\newblock {\em Lett. Math. Phys.}, 77(3):309--316, 2006.

\bibitem{FLSnumber}
R.~Frank, A.~Laptev, and O.~Safronov.
\newblock On the number of eigenvalues of {S}chr\"{o}dinger operators with
  complex potentials.
\newblock {\em J. Lond. Math. Soc. (2)}, 94(2):377--390, 2016.

\bibitem{FLS}
R.~Frank, A.~Laptev, and R.~Seiringer.
\newblock A sharp bound on eigenvalues of {S}chr\"{o}dinger operators on the
  half-line with complex-valued potentials.
\newblock In {\em Spectral theory and analysis}, volume 214 of {\em Oper.
  Theory Adv. Appl.}, pages 39--44. Birkh\"{a}user/Springer Basel AG, Basel,
  2011.

\bibitem{FraSab}
R.~Frank and J.~Sabin.
\newblock Restriction theorems for orthonormal functions, {S}trichartz
  inequalities, and uniform {S}obolev estimates.
\newblock {\em Amer. J. Math.}, 139(6):1649--1691, 2017.

\bibitem{Frank2}
R.~Frank and B.~Simon.
\newblock Eigenvalue bounds for {S}chr\"{o}dinger operators with complex
  potentials. {II}.
\newblock {\em J. Spectr. Theory}, 7(3):633--658, 2017.

\bibitem{Garnett}
J.~Garnett.
\newblock {\em Bounded analytic functions}, volume 236 of {\em Graduate Texts
  in Mathematics}.
\newblock Springer, New York, first edition, 2007.

\bibitem{GLMZ}
F.~Gesztesy, Y.~Latushkin, M.~Mitrea, and M.~Zinchenko.
\newblock Nonselfadjoint operators, infinite determinants, and some
  applications.
\newblock {\em Russ. J. Math. Phys.}, 12(4):443--471, 2005.

\bibitem{GolKup}
L.~Golinskii and S.~Kupin.
\newblock On non-selfadjoint perturbations of infinite band {S}chr\"{o}dinger
  operators and {K}ato method.
\newblock In {\em Harmonic analysis, function theory, operator theory, and
  their applications}, volume~19 of {\em Theta Ser. Adv. Math.}, pages
  171--182. Theta, Bucharest, 2017.

\bibitem{LapSaf}
A.~Laptev and O.~Safronov.
\newblock Eigenvalue estimates for {S}chr\"{o}dinger operators with complex
  potentials.
\newblock {\em Comm. Math. Phys.}, 292(1):29--54, 2009.

\bibitem{LatSuk}
Y.~Latushkin and A.~Sukhtayev.
\newblock The algebraic multiplicity of eigenvalues and the {E}vans function
  revisited.
\newblock {\em Math. Model. Nat. Phenom.}, 5(4):269--292, 2010.

\bibitem{Levin}
B.~Ya. Levin.
\newblock On a generalization of the {F}ourier-{P}lancherel transform
  ({R}ussian).
\newblock {\em Zap. Nauch. Issled. Inst. Khar. Gos. Univ.}, 20:83--98, 1950.

\bibitem{LiebThirring}
E.~Lieb and W.~Thirring.
\newblock Bound for the kinetic energy of fermions which proves the stability
  of matter.
\newblock {\em Phys. Rev. Lett.}, 35:687--689, 1975.

\bibitem{MarlSch}
M.~Marletta and R.~Scheichl.
\newblock Eigenvalues in spectral gaps of differential operators.
\newblock {\em J. Spectr. Theory}, 2(3):293--320, 2012.

\bibitem{Naimark}
M.~A. Na\u{\i}mark.
\newblock Investigation of the spectrum and the expansion in eigenfunctions of
  a non-selfadjoint differential operator of the second order on a semi-axis.
\newblock {\em Amer. Math. Soc. Transl. (2)}, 16:103--193, 1960.

\bibitem{Pavlov1}
B.~S. Pavlov.
\newblock On a non-selfadjoint {S}chr\"{o}dinger operator.
\newblock In {\em Probl. {M}ath. {P}hys., {N}o. {I}, {S}pectral {T}heory and
  {W}ave {P}rocesses ({R}ussian)}, pages 102--132. Izdat. Leningrad. Univ.,
  Leningrad, 1966.

\bibitem{Pavlov2}
B.~S. Pavlov.
\newblock On a non-selfadjoint {S}chr\"{o}dinger operator. {II}.
\newblock In {\em Problems of {M}athematical {P}hysics, {N}o. 2, {S}pectral
  {T}heory, {D}iffraction {P}roblems ({R}ussian)}, pages 133--157. Izdat.
  Leningrad. Univ., Leningrad, 1967.

\bibitem{Pavlov3}
B.~S. Pavlov.
\newblock On a nonselfadjoint {S}chr\"{o}dinger operator. {III}.
\newblock In {\em Problems of mathematical physics, {N}o. 3: {S}pectral theory
  ({R}ussian)}, pages 59--80. 1968.

\bibitem{ReedSim1}
M.~Reed and B.~Simon.
\newblock {\em Methods of Modern Mathematical Physics. {I}. {F}unctional
  Analysis}.
\newblock Academic Press, New York-London, 1972.

\bibitem{Safronov}
O.~Safronov.
\newblock On a sum rule for {S}chr\"{o}dinger operators with complex
  potentials.
\newblock {\em Proc. Amer. Math. Soc.}, 138(6):2107--2112, 2010.

\bibitem{Sambou}
D.~Sambou.
\newblock Lieb-{T}hirring type inequalities for non-self-adjoint perturbations
  of magnetic {S}chr\"{o}dinger operators.
\newblock {\em J. Funct. Anal.}, 266(8):5016--5044, 2014.

\bibitem{StepaBounds}
A.~Stepanenko.
\newblock {B}ounds for {S}chr\"{o}dinger operators on the half-line perturbed
  by dissipative barriers.
\newblock {\em Integr. Equ. Oper. Theory}, 93(60), 2021.

\bibitem{StepaIncl}
A.~Stepanenko.
\newblock Spectral inclusion and pollution for a class of dissipative
  perturbations.
\newblock {\em J. Math. Phys.}, 62(1):013501, 2021.

\bibitem{Stepin}
S.~A. Stepin.
\newblock Complex potentials: bound states, quantum dynamics and wave
  operators.
\newblock In {\em Semigroups of operators---theory and applications}, volume
  113 of {\em Springer Proc. Math. Stat.}, pages 287--297. Springer, Cham,
  2015.

\bibitem{Teschl}
G.~Teschl.
\newblock {\em Ordinary differential equations and dynamical systems}, volume
  140 of {\em Graduate Studies in Mathematics}.
\newblock American Mathematical Society, Providence, RI, 2012.

\end{thebibliography}

\end{document}